\documentclass{amsart}
\usepackage{amsmath}
\usepackage{amssymb}
\usepackage{tikz-cd}
\usepackage{amsthm}
\usepackage{mathrsfs}
\usepackage{fullpage}
\usepackage{float}

%Stadard newcommands

\newcommand{\R}{\mathbb{R}}

\newcommand{\Z}{\mathbb{Z}}

\newcommand{\h}{\mathbb{H}}

%Macros for this paper

\renewcommand{\int}[1]{#1_{\text{int}}}
\newcommand{\ext}[1]{#1_{\text{ext}}}

\newtheorem{thm}{Theorem}[subsection]
\renewcommand{\thethm}{%
	\ifnum\value{subsection}>0
	\thesubsection
	\else
	\thesection
	\fi
	.\arabic{thm}%
}
\newtheorem{lemma}[thm]{Lemma}

\newtheorem{cor}[thm]{Corollary}

\newtheorem{prop}[thm]{Proposition}

\newtheorem{question}[thm]{Question}

\theoremstyle{definition}
\newtheorem{Def}[thm]{Definition}

\theoremstyle{remark}
\newtheorem{rk}[thm]{Remark}

\title{Metric Spaces of Arbitrary Finitely-Generated Scaling Group \vspace{-2mm}}
\date{\today}
\author{Daniel Levitin}

\begin{document}
	
	\maketitle
	
	For a metric space $X$ with a compatible measure $\mu$, Genevois and Tessera defined the Scaling Group of $(X,\mu)$ as the subgroup $\Gamma$ of $\mathbb{R}_{>0}$ of positive real numbers $\gamma$ for which there are quasi-isometries of $X$ coarsely scaling $\mu$ by a factor of $\gamma$ \cite{GenevoisTessera2}. We show that for any finitely generated subgroup $\Gamma$ of $\mathbb{R}_{>0}$ there exists a space $N_\Gamma$, bi-Lipschitz equivalent to a graph of finite degree, with scaling group $\Gamma$. 
	
	\section{Introduction}
	
	If $(X,d_X)$ and $(Y,d_Y)$ are metric spaces, the function $q:X\to Y$ is a \textit{K quasi-isometry} if for all pairs of points $a,b\in X$, we have $\frac{1}{K}d_X(a,b)-K\le d_Y(q(a),q(b))\le Kd_X(a,b)+K$, and if the $K$-neighborhood of the image of $X$ covers $Y$, i.e. $\mathscr{N}_K(f(X))=Y$. Such a map $q$ has a \textit{coarse inverse} which we denote $\widetilde{q}:Y\to X$, i.e. a map $\widetilde{q}$ that is a quasi-isometry and so that $d_{\sup} (q\circ \widetilde{q},Id_Y)<\infty$ and $d_{\sup} (\widetilde{q}\circ q, Id_X)<\infty$. A quasi-isometry with $K=0$ is a \textit{bi-Lipschitz equivalence}. The existence of coarse inverses implies that quasi-isometry is an equivalence relation.
	
	The program of studying infinite-diameter metric spaces, and especially groups, up to quasi-isometry goes back to Gromov \cite{Gromov}. One general question of interest to Gromov was what role the additive factor plays, and therefore how different the equivalences of quasi-isometry and bi-Lipschitz could be.
	
	A natural setting in which to consider these questions is that of uniformly discrete spaces of bounded geometry. \textit{Uniformly discrete} metric spaces $(X,d)$ are those for which $\inf \{d(x_1,x_2): x_1, x_2\in X\}>0$. Metric spaces of \textit{bounded geometry} are those for which $\sup\{|B(x,r)|:x\in X\}<\infty$ for all $r$. We will abbreviate a space with both properties to be \textit{UDBG}. Typical examples are finitely-generated groups with word metrics, or more generally the vertex set of any graph of bounded degree. In this setting, Whyte showed that quasi-isometry and bi-Lipschitz equivalence coincide in the for non-amenable spaces \cite{Whyte}. This result was known prior in some restricted cases, see e.g. \cite{Nekrashevych}. Further examples of spaces where quasi-isometry and bi-Lipschitz equivalence agree were given in \cite{PapasogluWhyte}. On the other hand, in \cite{Dymarz1, Dymarz2} Dymarz gave the first examples of groups that are quasi-isometric but not bi-Lipschitz equivalent.
	
	Whyte's proof in \cite{Whyte} relied on recognizing that any bijective quasi-isometry of UDBG spaces is automatically a bi-Lipschitz equivalence. By framing bi-Lipschitz equivalences of UDBG spaces as the quasi-isometries for which every point has a single pre-image, we turn the process of checking whether a quasi-isometry is a bi-Lipschitz equivalence into a counting problem.  More, generally, a key observation in the study of quasi-isometries of UDBG spaces is the following. For UDBG spaces, we may take the point count of a set as a notion of volume, and consider how different maps scale this volume. Since quasi-isometries can expand or contract neighborhoods only by a fixed amount, the assumption of bounded geometry puts a maximum on how much a given quasi-isometry may stretch or shrink the volume. In this phrasing, bi-Lipschitz equivalences are volume-preserving quasi-isometries.
	
	In this paper, we will be interested not just in bi-Lipschitz equivalences, but in the slightly larger class of quasi-isometries which scale cardinalities by a constant on average.
	
	\begin{Def}[Coarsely $\gamma$-to-1]\label{gamma-scaling}
		A quasi-isometry $q:X\to Y$ between UDBG spaces is \textit{coarsely $\gamma$-to-1} if for all finite sets $S\subset Y$, and sufficiently large $r$, we have the following bound for some global constant $C_r$:
		$$\bigl||q^{-1}(S)|-\gamma|S|\bigr|\le C_r|\partial_r (S)|$$
		We will also call such a map $\gamma$\textit{-scaling}, or \textit{scaling} if it is $\gamma$-scaling for some $\gamma$.
	\end{Def}
	
	This formulation of the definition is due to Genevois and Tessera in their recent paper \cite{GenevoisTessera2}, going back to the ideas of Dymarz in \cite{Dymarz1}. Genevois and Tessera used scaling quasi-isometries to classify the quasi-isometries of certain wreath products \cite{GenevoisTessera1}. Note that in \cite{GenevoisTessera2}, this definition is extended to work in the setting of \textit{metric measure spaces}, which are metric spaces with a locally-finite Borel measure in which metric balls are relatively compact. Since we restrict to UDBG spaces with the counting measure, the above definition is a special case of the definition in \cite{GenevoisTessera2}. See Section \ref{misc} for more on arbitrary scaling groups in the setting of metric measure spaces.
	
	As an example of such a map, the inclusion of an index-$n$ subgroup into a finitely-generated group with word metric is coarsely $\frac{1}{n}$-to-1. However, a typical quasi-isometry should not be assumed to be scaling for any $\gamma$. For instance, the map $q:\Z\to\Z$ given by $$q(n)=\begin{cases} n & n\ge 0 \\ 2n & n<0\end{cases}$$ is a quasi-isometry, but it is not scaling \cite{GenevoisTessera2}.
		
	As with maps that are (exactly) $\gamma$-to-1, the scaling values are multiplicative under composition of maps. Moreover, if $q$ is $\gamma$-scaling, then its coarse inverse is $\frac{1}{\gamma}$-scaling. Therefore, for any metric space $X$, there is a group of scaling self quasi-isometries of $X$ \cite{GenevoisTessera2}. If we forget about the maps and consider only the scaling values, we get the \textit{scaling group} of $X$, denoted $Sc(X)$, consisting of positive real numbers $\gamma$ so that there is some $\gamma$-scaling quasi-isometry $q:X\to X$. 
	
	In the original setting of Cayley graphs, relatively few scaling groups have been computed. For lattices in Carnot groups, which are Lie groups that come with a continuous family of dilations already, the dilations give rise to a scaling group of all of $\R_{>0}$. The same is true for lattices in Sol and certain higher-rank analogues \cite{GenevoisTessera2}. Since the solvable Baumslag-Solitar groups $BS(1,n)$ for $n\ge 2$ have the model geometry of a tree of hyperbolic planes with a global horocyclic direction for all of the planes, the dilations of this horocyclic direction achieve all of $\R_{>0}$ as scaling values (see \cite{GenevoisTessera2} for a different treatment of this proof). The recent work of Genevois and Tessera on rigidity of wreath products over finitely presented and one-ended groups shows that this entire class has trivial scaling group \cite{GenevoisTessera1}. The only intermediate case that is known is Dymarz's theorem that certain wreath products over $\Z$ have scaling group is generated by a finite collection of primes \cite{Dymarz2}.
	
	\begin{question} [\cite{GenevoisTessera2} Question 7.6]
		Which finite sets $S$ of real numbers can be the generating sets of $Sc(G)$ for $G$ a group?
	\end{question}
	
	Relaxing to the setting of UDBG spaces, we provide a full answer to this question.
	
	\begin{thm}[Main Theorem]\label{MainThm}
		For any finite set $\{\gamma_i\}$ of real numbers, there is a space $N_\Gamma$ so that \newline $Sc(N_\Gamma)=\Gamma=\langle\{\gamma_i\} \rangle$
	\end{thm}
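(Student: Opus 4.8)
The plan is to work on the logarithmic side. Since $\Gamma$ is a finitely generated torsion-free subgroup of $(\R^+,\times)$, applying $\log$ identifies it with a finitely generated subgroup of $(\R,+)$, hence with $\Z^d$ for some $d$; I fix a basis and write $\Gamma=\langle\gamma_1,\dots,\gamma_d\rangle$ with each $\gamma_i>1$ and $\log\gamma_1,\dots,\log\gamma_d$ linearly independent over $\Q$. Realizing a multiplicative scaling factor $\gamma$ then amounts to producing a self-quasi-isometry that shifts coarse volume density additively by $\log\gamma$. I would build a graph $X_\Gamma$ carrying a coarse ``height'' function $h\colon X_\Gamma\to\Z$ whose geometry is engineered so that (i) for each $i$ there is a self-quasi-isometry multiplying coarse volume by $\gamma_i$, and (ii) these are essentially the only density shifts available. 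The model to generalize is the lamplighter / Diestel--Leader picture underlying Dymarz's theorem, where base re-encodings already produce scalings by primes; the new content is to arrange arbitrary real factors and to pin the scaling group down to exactly $\Gamma$.

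\textbf{The space.} For a single generator $\gamma>1$ the first instinct is a ``tower'' $\cdots - B_{n-1}-B_n-B_{n+1}-\cdots$ of finite blocks with $|B_{n+1}|/|B_n|\to\gamma$, since a height shift then multiplies volume by $\gamma$ on average; and because coarsely $k$-to-$1$ only controls the defect by $C_r|\partial_rA|$, integer block sizes with the ratios $|B_n|\big/\gamma^{n}$ bounded away from $0$ and $\infty$ suffice even when $\gamma$ is irrational. The obstruction is amenability: a one-sided geometric tower behaves like a horoball, its top block alone has boundary comparable to the whole volume, and so it is non-amenable. I would therefore pass to a balanced, horocyclic-type construction (in the spirit of Diestel--Leader graphs and lamplighters) in which the up- and down-growth are matched so that height-intervals are F{\o}lner, while the arithmetic needed for the $\gamma_i$-scalings is placed in the fibers rather than in an asymmetry of the growth rates --- exactly as the amenable lamplighter still carries nontrivial scalings. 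To realize all of $\Gamma\cong\Z^d$ I would take a suitable (quasi-isometric) product of $d$ such single-generator pieces, one per basis element.

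\textbf{Lower bound.} With the fibers designed to encode, for each $i$, a density-ratio-$\gamma_i$ re-encoding, I would exhibit explicit self-quasi-isometries $q_i\colon X_\Gamma\to X_\Gamma$ and verify Definition~\ref{k-scaling} directly: for finite $A$ one estimates $\bigl||q_i^{-1}(A)|-\gamma_i|A|\bigr|$ against $|\partial_rA|$, the point being that the density defect is supported near $\partial_r A$ while the bulk contributes exactly the factor $\gamma_i$. Since scaling factors multiply under composition and form a group, this gives $\Gamma\subseteq Sc(X_\Gamma)$.

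\textbf{Upper bound, and the main obstacle.} The hard part is the reverse inclusion $Sc(X_\Gamma)\subseteq\Gamma$: I must show no exotic scaling sneaks in --- for instance that realizing $\langle 6\rangle$ does not accidentally produce a $2$-scaling map. Because $X_\Gamma$ is amenable, any scaling self-quasi-isometry has a well-defined factor $k$ (it cannot be $k'$-scaling for $k'\ne k$), and averaging against a F{\o}lner sequence expresses $k$ as a ratio of coarse volume densities. The strategy is to attach to $X_\Gamma$ a quasi-isometry invariant ``density profile'' detecting the height shift, show that a $k$-scaling map multiplies it by $k$, and argue from the built-in self-similarity and the $\Q$-independence of the $\log\gamma_i$ that the attainable values lie in $\langle\gamma_i\rangle$. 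The genuine difficulty is that a quasi-isometry need not respect the height or block structure pointwise; it may permute and smear the fibers wildly, so the argument must be entirely coarse and lean on amenability (F{\o}lner averaging) to extract $k$ and then on the rigidity of the fiber arithmetic to force $k\in\Gamma$. This rigidity step --- ruling out all unintended density shifts while keeping the space amenable --- is where I expect essentially all of the work to concentrate.
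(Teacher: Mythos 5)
There is a genuine gap, and it sits exactly where you say all the work concentrates: your upper bound $Sc(X_\Gamma)\subseteq\Gamma$ is a strategy-shaped placeholder (``attach a quasi-isometry invariant density profile\dots argue from the built-in self-similarity'') with no actual mechanism, and the lower bound construction you lean on cannot deliver arbitrary real generators. In the lamplighter/Diestel--Leader world you take as your model, the rigidity available (Eskin--Fisher--Whyte coarse differentiation, as used by Dymarz) pins quasi-isometries to bilipschitz maps of boundaries whose measure-scalings are quantized by the \emph{integer} branching data; that is precisely why Dymarz's scaling groups are generated by finitely many primes. A horocyclic tower with variable block sizes tracking an irrational $\gamma$ (your ``$|B_n|/\gamma^n$ bounded away from $0$ and $\infty$'' fix) has no known analogue of the EFW classification, so the rigidity step you defer has no tool to run on: you cannot rule out exotic scalings on a space whose quasi-isometry group you cannot describe. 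Similarly, your plan to take a product of $d$ single-generator pieces needs a splitting/rigidity theorem for quasi-isometries of the product, which you do not supply.

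The paper resolves this by inverting your logic: rather than building a space whose arithmetic generates $\Gamma$ and then fighting to exclude extra scalings, it starts from a net $N$ in Sol (or, for $d>1$, in a single higher-rank group $\R^{2n}\rtimes\R^{2n-1}$), where the scaling group is all of $\R^+$ but the quasi-isometry group is completely classified (Eskin--Fisher--Whyte, resp.\ Peng): every quasi-isometry is close to a coordinate permutation composed with a left translation and coordinatewise bilipschitz maps $(f_1,f_2,id)$. A F\o lner box argument (Proposition \ref{SolScalingIsAffineProp}) shows a \emph{scaling} quasi-isometry must have each $f_i$ affine, with scaling factor the reciprocal of the product of the linear parts. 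The group $\Gamma$ is then imposed geometrically: copies of $\Z^2$ (flats of distinct dimensions in higher rank, to kill coordinate permutations) are glued along attachment points $\rho_N(\gamma^{2i},\gamma^{-i},0)$, growth and quasi-flat rigidity force any quasi-isometry to coarsely preserve $N$ and the attachment locus, and an affine map preserves a $\gamma$-geometrically-spaced locus only if its linear parts are powers of $\gamma$. This last step---cutting $\R^+$ down to $\langle\gamma\rangle$ by a coarsely rigid locus inside a space with a known $QI$ group---is the idea your proposal is missing, and without it (or a new rigidity theorem for your towers) the argument cannot be completed.
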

	
	The spaces $N_\Gamma$ are far from being Cayley graphs. They do not necessarily have any isometries, and certainly do not have a transitive isometry group. They can however be taken to be bi-Lipschitz equivalent to graphs whose degree is finite, and bounded above by a function of the number of generators.
	
	Although this paper will exclusively use Genevois and Tessera's formulation of scaling quasi-isometries, all of the work in \cite{Whyte, PapasogluWhyte, Dymarz1, Dymarz2} was phrased in terms of the Uniformly Finite Honology of Block and Weinberger \cite{BlockWeinberger}. These formulations are equivalent, and we sketch here the correspondence between the two. The uniformly finite homology provides a family of vector spaces as a quasi-isometry invariant to a UDBG space $X$. The group of self-quasi-isometries $QI(X)$ (typically an intractable group to work with) therefore has a family of representations given by the induced maps on the homology vector spaces. In the $0^{th}$ homology of an amenable space there is a nonzero fundamental class $[X]$. Scaling quasi-isometries are those with the fundamental class as an eigenvector, and the scaling group is the group of eigenvalues. Scaling maps $q:X\to Y$ between spaces are those for which $q_*[X]=\gamma[Y]$. In this setting, for instance, scaling quasi-isometries and the scaling group are both evidently groups under composition and multiplication respectively. In fact, one sees, e.g., that the collection of such $\gamma$ so that a $\gamma$-scaling map $q:X\to Y$ exists is evidently invariant under multiplication by both $Sc(X)$ and $Sc(Y)$. For more details on this viewpoint, see \cite{NowakYu}.
	
	\subsection*{Outline}
	
	Section \ref{Preliminaries} is devoted to basic definitions and simple properties that will be used throughout the paper.
	
	Section \ref{Rank1Section} deals with the simplest form of the main construction: achieving the scaling group $\langle \gamma\rangle$ for a single positive real number $\gamma$. In order to construct a space with such a scaling group, we start by considering a net $N$ in the Lie group Sol $=\R^2\rtimes\R$, which we give coordinates $(x,y,t)$. This case is treated separately from the cases with more generators because it is slightly simpler while providing the outline for all the proofs in the general case. It has the added advantage that the group Sol is 3-dimensional, which simplifies calculations and allows for supporting pictures in a more familiar context.
	
	Subsection 3.1 deals with generalities about the net $N$, and introduces a map $\rho:\text{Sol}\to N$ that allows quasi-isometries $\overline{q}$ of Sol to be approximated by quasi-isometries $q$ of $N$ and quasi-isometries $q$ of $N$ to induce quasi-isometries $\overline{q}$ of Sol. Subsection 3.2 uses $\rho$ to describe quasi-isometries of $N$ in terms of the classification of quasi-isometries of Sol due to Eskin-Fisher-Whyte.
	
	\begin{thm}[\cite{EFW1, EFW2}]
		
		Any quasi-isometry of Sol is at finite distance from a composition of an isometry with a map $(x,y,t)\mapsto (f_1(x), f_2(y), t)$ where $f_1$ and $f_2$ are bi-Lipschitz homeomorphisms of $\R$.
		
	\end{thm}
	
	See the complete statement in Theorem \ref{EFWMainThm}. In subsection 3.2 we prove the following proposition.
	
	\begin{prop}
		
		Let $q:N\to N$ be a scaling quasi-isometry, inducing a map $\overline{q}$ on Sol. Then the coordinate functions $f_1$ and $f_2$ of $\overline{q}_1$ are affine.
		
	\end{prop}
	
	A more precise statement is in proposition \ref{SolScalingIsAffineProp}. Along the way, we prove that the same holds in Sol, where scaling is defined in terms of the Haar measure. This is not used in the sequel, but may be of independent interest.
	
	Subsection 3.3 constructs the space $N_{\langle \gamma\rangle}$ whose scaling group is $\langle \gamma\rangle$. Roughly, this space arises from attaching an infinite family of flats $\{\Z^2_i:i\in\Z\}$ onto $N$ along a locus $A=\{a_i\}$, where the $a_i$ are spaced out so that their $x$ and $y$ coordinates differ by factors close to $\gamma$. The purpose of the flats is to make sure that any quasi-isometry must coarsely permute the $a_i$ (see Proposition \ref{FlatPreservingProp}). Along the way to prove Proposition \ref{FlatPreservingProp} we use the results of Eskin-Farb \cite{EskinFarb} or Kleiner-Leeb \cite{KleinerLeeb} to deduce the following well-known corollary that, to our knowledge, has not been proved in print.
	
	\begin{cor}
		There is no quasi-isometrically embedded flat in Sol of rank higher than 1.
	\end{cor}

	In Subsection 3.4, we assemble the pieces from the previous subsections, by observing that, in order to coarsely permute the $a_i$, a quasi-isometry of Sol with affine coordinate functions $f_1(x)=m_1x+b_1$ and $f_2(y)=m_2y+b_2$ must have $m_1$ and $m_2$ powers of $\gamma$. This shows that such a quasi-isometry must scale by a power of $\gamma$, so that $Sc(N_{\langle \gamma\rangle})\subset\langle\gamma\rangle$. We complete the 1-generator case of the main theorem by constructing an explicit quasi-isometry with scaling value $\gamma$.
	
	Section 4 is more terse than Section 3, and is devoted to generalizing the previous arguments to cases with more generators. It has four Subsections analogous to those of Section 3. The same approach works with the following changes. We use a net $N$ in a higher-rank solvable Lie group of the form $G=\R^{2n}\rtimes \R^{2n-1}$, where $n$ is the desired number of generators of $\Gamma=\langle \{\gamma_i\}_{i=1}^n\rangle$. We leverage a classification of their quasi-isometries due to Peng, which is described more fully in Theorem \ref{PengMainThm}.
	
	\begin{thm}[\cite{Peng1, Peng2}]
		The quasi-isometries of these Lie groups are compositions of isometries, bi-Lipschitz maps on the $\R^{2n}$ coordinates, and certain coordinate interchanges.
	\end{thm}
	
	As before, we use the $2n$ dimensions in the $\R^{2n}$ factor of $G$ to create $n$ loci for the attachment of flats. Each locus is spaced by a factor of a different generator. The resulting space is again $N_\Gamma$. The flats are chosen to be of high enough dimension so that they cannot be quasi-isometrically embedded into $G$. We prove in analogy to the Sol case that scaling quasi-isometries of the net have affine coordinate functions, and that any quasi-isometry of $N_\Gamma$ coarsely preserves the attachment points. The spacing of the flats then forces the affine coordinate maps to have linear terms that are powers of $\gamma_i$. 
	
	We conclude the paper in Section 5 by collecting miscellaneous improvements to the main construction. In particular, we show that the spaces constructed above can be taken to be graphs with edge-path distance. 
	
	\subsection*{Acknowledgments}
	
	I would like to thank my advisor, Tullia Dymarz, for encouraging me to tackle this project and for numerous useful conversations along the way. I would also like to thank the referee for many suggestions for how to improve this paper.
	
	\newpage
	
	\section{Preliminaries}\label{Preliminaries}
	
	We reiterate the definition of a quasi-isometry.
	
	\begin{Def} \label{QuasiIsometryDef}
		If $(X,d_X)$ and $(Y,d_Y)$ are metric spaces, the function $q:X\to Y$ is a \textit{K quasi-isometry} if for all pairs of points $a,b\in X$, we have $\frac{1}{K}d_X(a,b)-K\le d_Y(q(a),q(b))\le Kd_X(a,b)+K$, and if in addition, $q(X)$ is $K$-coarsely dense in $Y$. A map $q$ is a \textit{quasi-isometry} if it is a $K$ quasi-isometry for some $K$. 
	\end{Def}
	
	We will usually consider quasi-isometries up to the following equivalence.
	
	\begin{Def}
		
		Given two maps $f: (X, d_X)\to (Y, d_Y)$ and $g:(X, d_X)\to (Y, d_Y)$, the \textit{Supremum Distance} between them, denoted $d_{\text{sup}}(f, g)$, is defined to be $\sup_{x\in X} d_Y(f(x), g(x))$. We will denote $f\approx g$ when $d_{\text{sup}}(f,g)$ is finite.
		
	\end{Def}
	
	It is well known that the $\{\text{self quasi-isometries of X}\}/\approx$ is a group under the composition operation, denoted $QI(X)$.
	
	\begin{Def}\label{UDBGDef}
		
		A metric space $(X,d)$ is said to be \textit{uniformly discrete} if $\inf_{x_1\ne x_2} d(x_1,x_2)>0$. A metric space is said to have \textit{bounded geometry} if for each $r\in\R_{>0}$, there is some finite $B_r$ so that \newline $\sup_{x\in X} |B(x,r)|=B_r$. A space with both properties will be called \textit{UDBG}.
	\end{Def}
	
	We will use the following remark routinely and not always explicitly
	
	\begin{rk} \label{QIBoundedtoOne}
		
		It follows immediately from the definition of a $K$ quasi-isometry that the pre-image of a point is contained in a ball of radius $K^2+K$. In a setting with bounded geometry, the pre-image of a point therefore contains no more than $B_{K^2+K}$ points.
		
	\end{rk}
	
	\begin{Def}\label{AmenableDef}
		Denote $\partial_r(S)=\mathscr{N}_r(S)\cap\mathscr{N}_r(S^c)$. A UDBG space $X$ will be said to be \textit{amenable} if it admits a sequence of finite subsets $S_i\subset X$  so that for each positive $r$,  $\lim_{i\to\infty} \frac{|\partial_r (S_i)|}{|S_i|}=0$. Such a sequence is termed a \textit{F\o lner sequence}.
	\end{Def}

	%Is this paragraph necessary? Possibly cite this.
	We will extend this definition for Lie groups by replacing cardinalities with Haar measures. That is, such a group is amenable if we have a sequence $S_i$ of measurable sets so that $\lim_{i\to\infty} \frac{\mu(\partial_n(S_i))}{\mu(S_i)}\to0$.
	
	\begin{Def}
		
		Let $q:X\to Y$ be a quasi-isometry of UDBG spaces. If there are constants $C$ and $r$ depending only on $q$ so that $$\bigl||q^{-1}(S)|-\gamma|S|\bigr|\le C|\partial_r(S)|$$ for every finite subset $S$ of $Y$, then we will say that $q$ is \textit{coarsely} $\gamma$\textit{-to-1}, or $\gamma$\textit{-scaling}.
		
		For a UDBG space $X$ the collection of $k$ so that there exists a coarsely $\gamma$-to-1 self quasi-isometry of $X$ is termed the \textit{Scaling Group of X}, denoted $Sc(X)$. It is a multiplicative group of positive real numbers.
		
	\end{Def}
	
	Because it is only for F\o lner sequences $S_i$ that $|\partial_r(S_i)|$ be made arbitrarily small relative to $|S_i|$, we will usually only consider scaling quasi-isometries and the scaling group for amenable spaces. If we did not, it is a result of Whyte that the scaling group of any non-amenable space is all of $\R_{>0}$, and in fact that every quasi-isometry is $\gamma$-scaling for every value of $\gamma$ \cite{Whyte}. When we wish to study a specific F\o lner sequence $S_i$, we will say that $q$ is $\gamma$\textit{-scaling on} $S_i$ to mean that there are positive constants $C$ and $r$ so that $$\bigl||q^{-1}(S_i)|-\gamma|S_i|\bigr|\le C|\partial_r(S_i)|$$
	
	The following theorem is an easy corollary to and reformulation of an observation made by Block and Weinberger in \cite{BlockWeinberger}, and first proved explicitly by Whyte in theorem 7.6 of \cite{Whyte}.
	
	\begin{thm} \cite{BlockWeinberger, Whyte}  \label{BWScalingCriterion}
		If $q:X\to Y$ is a quasi-isometry of amenable UDBG spaces, then $q$ is $\gamma$-scaling iff $q$ is $\gamma$-scaling on all F\o lner sequences $S_i$ in $Y$.
	\end{thm}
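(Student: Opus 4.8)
The plan is to reduce both conditions to a statement about the bounded function $h\colon Y\to\R$ given by $h(y)=|q^{-1}(y)|-k$, and then to exploit the fact that any set witnessing a large defect is automatically almost-F\o lner. First I would record that $h$ is bounded: since $q$ is a $K$-quasi-isometry, two points with the same image satisfy $0=d_Y(q(x_1),q(x_2))\ge \tfrac1K d_X(x_1,x_2)-K$, so each fibre $q^{-1}(y)$ has diameter at most $K^2$ and hence cardinality at most $B_{K^2}$ by bounded geometry; thus $|h(y)|\le H:=\max(k,\,B_{K^2}-k)$. Writing $f(A):=|q^{-1}(A)|-k|A|=\sum_{y\in A}h(y)$, the defining inequality of Definition \ref{k-scaling} reads $|f(A)|\le C_r|\partial_r A|$, while the crude bound $|f(A)|\le H|A|$ holds for every finite $A$.

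The forward direction is immediate: if $q$ is $k$-scaling there is an $R$ so that for each $r\ge R$ some $C_r$ gives $|f(A)|\le C_r|\partial_r A|$ for all finite $A$, and applying this to the terms of any F\o lner sequence $S_i$ yields $|f(S_i)|\le C_r|\partial_r S_i|=O(|\partial_r S_i|)$.

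For the converse I would argue the contrapositive. The key reformulation is that ``$q$ is $k$-scaling'' is equivalent to the existence of an $R$ such that $\sup_A |f(A)|/|\partial_r A|$ is finite for every $r\ge R$ (supremum over finite $A$), since a finite supremum at scale $r$ is exactly the assertion of a valid constant $C_r$. Hence if $q$ is \emph{not} $k$-scaling, the set of scales at which this supremum is infinite is unbounded, so I may choose $r_j\to\infty$ together with finite sets $A_j$ with $|f(A_j)|>j\,|\partial_{r_j}A_j|$. Combining this with $|f(A_j)|\le H|A_j|$ gives $|\partial_{r_j}A_j|/|A_j|<H/j$. Now for any fixed scale $r^*$ and all $j$ large enough that $r_j\ge r^*$, monotonicity of the boundary in the radius (immediate from $\mathscr{N}_{r^*}(S)\subseteq\mathscr{N}_{r_j}(S)$, hence $\partial_{r^*}A\subseteq\partial_{r_j}A$) yields $|\partial_{r^*}A_j|/|A_j|\le H/j\to 0$, so $(A_j)$ is a genuine F\o lner sequence in the sense of Definition \ref{AmenableDef}. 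Along this very sequence the same monotonicity gives $|f(A_j)|>j\,|\partial_{r_j}A_j|\ge j\,|\partial_{r^*}A_j|$ for all large $j$, so $|f(A_j)|$ is not $O(|\partial_{r^*}A_j|)$ at any fixed scale $r^*$, contradicting the hypothesized F\o lner bound.

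The step I would be most careful about is the interplay of the quantifier on $r$: one must observe that failure of the $k$-scaling bound cannot be confined to a single bounded scale, for if the supremum $\sup_A|f(A)|/|\partial_r A|$ were finite at every $r\ge R$ the map would already be $k$-scaling, so the bad scales necessarily march off to infinity. It is exactly this that lets the witnessing sets $A_j$ acquire small boundary at \emph{every} fixed scale and thereby become F\o lner. The boundedness of $h$, i.e. the estimate $|f(A)|\le H|A|$, is the other essential input, since it is what forces any set with large defect-to-boundary ratio to be almost-F\o lner in the first place; without bounded geometry this mechanism fails.
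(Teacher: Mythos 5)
Your proof is correct. Note, though, that the paper itself does not prove Theorem \ref{BWScalingCriterion} --- it is quoted with citations to Block--Weinberger and Whyte --- so there is no internal argument to compare against; what you have written is essentially the standard proof from Whyte's paper, specialized from the uniformly finite homology setting to the $0$-chain $h(y)=|q^{-1}(y)|-k$. All the key steps check out: fibres have diameter at most $K^2$ and hence cardinality at most $B_{K^2}$, giving the crucial bound $|f(A)|\le H|A|$; the forward direction is indeed immediate; and in the contrapositive, the witnessing sets $A_j$ with $|f(A_j)|>j\,|\partial_{r_j}A_j|$ are forced to satisfy $|\partial_{r_j}A_j|/|A_j|<H/j$ and are promoted to a genuine F\o lner sequence by the monotonicity $\partial_{r^*}A\subseteq\partial_{r_j}A$. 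Two small remarks. First, that same monotonicity shows the ``good'' scales (those with finite supremum) are upward-closed, so failure of $k$-scaling actually makes the supremum infinite at \emph{every} scale, not merely at an unbounded set of scales; your weaker statement suffices, but the stronger one streamlines the quantifier discussion you flag as delicate. Second, your supremum $\sup_A|f(A)|/|\partial_r A|$ has a $0$-denominator edge case when $\partial_r A=\emptyset$ but $f(A)\ne 0$ (possible in a coarsely disconnected UDBG space); the convention that such a ratio is infinite is the right one, and the strict inequality $|f(A_j)|>j\,|\partial_{r_j}A_j|$ you actually use handles it, but it deserves a word. Finally, it is worth observing that your converse never uses amenability of $Y$: in a non-amenable space the F\o lner hypothesis is vacuous and your construction shows failure of $k$-scaling would manufacture F\o lner sets, which recovers exactly Whyte's result, quoted in the introduction, that every quasi-isometry of non-amenable spaces is $k$-scaling for every $k$.
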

	
	\begin{rk} \label{ScalingImpliesRatio}
		
		Observe that if $q$ is $\gamma$-scaling on $S_i$, then $\lim_{i\to\infty} \frac{|q^{-1}(S_i)|}{|S_i|}=\gamma$. However, the converse is not true in general since $\bigl||q^{-1}(S_i)|-\gamma|S_i|\bigr|$ could grow faster than $\partial_r(S_i)$ for any $r$, but still slower than $|S_i|$.
		
	\end{rk}
	
	As a consequence of Theorem \ref{BWScalingCriterion}, if $X$ is amenable, then a quasi-isometry $q$ is $\gamma$-scaling for at most one value of $\gamma$. It is typically impractical to check every F\o lner sequence in a space $Y$, as in general they may have very little structure. However, the contrapositive is very useful. In particular, if $q$ is $\gamma_1$-scaling on a F\o lner sequence $S_{i,1}$ and $\gamma_2$-scaling on a F\o lner sequence $S_{i, 2}$, then $q$ is not scaling. Remark \ref{ScalingImpliesRatio} says that it suffices to show that $\lim_{i\to\infty} \frac{|q^{-1}(S_{i,1})|}{|S_{i,1}|}\ne \lim_{i\to\infty} \frac{|q^{-1}(S_{i,2})|}{|S_{i,2}|}$ to show that $q$ is not scaling.
	
	We record also the following lemma
	
	\begin{lemma} \label{ApproxInvarianceLemma}
		Let $q_1$ and $q_2$ be quasi-isometries between amenable metric spaces $X$ and $Y$, and let $q_1\approx q_2$. Then $q_1$ is $\gamma$-scaling iff $q_2$ is.
	\end{lemma}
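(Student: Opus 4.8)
The plan is to reduce everything to the F\o lner characterization in Theorem~\ref{BWScalingCriterion} and to show that the preimages $q_1^{-1}(S)$ and $q_2^{-1}(S)$ of a set $S\subseteq Y$ can differ only in a controlled neighborhood of the boundary of $S$. Write $D:=d_{\sup}(q_1,q_2)<\infty$, which is finite precisely because $q_1\approx q_2$. First I would record the standard fact that a quasi-isometry between UDBG spaces has \emph{bounded multiplicity}: if $q$ is a $K$ quasi-isometry and $q(x)=q(x')$, then the lower quasi-isometry bound forces $d_X(x,x')\le K^2$, so by bounded geometry every fiber $q^{-1}(y)$ has at most $M:=B_{K^2}$ points. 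This lets me convert a bound on the image of a set into a bound on its cardinality.

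The key step is the set-theoretic containment
\[
q_1^{-1}(S)\,\triangle\,q_2^{-1}(S)\ \subseteq\ q_1^{-1}\bigl(\partial_D(S)\bigr).
\]
To see this, suppose $x$ lies in the symmetric difference, so exactly one of $q_1(x),q_2(x)$ lies in $S$. In either case one of these points lies in $S$ and the other in $S^c$, and since $d_Y(q_1(x),q_2(x))\le D$ we get $q_1(x)\in\mathscr{N}_D(S)\cap\mathscr{N}_D(S^c)=\partial_D(S)$. Combining this containment with bounded multiplicity yields
\[
\bigl||q_1^{-1}(S)|-|q_2^{-1}(S)|\bigr|\ \le\ \bigl|q_1^{-1}(S)\,\triangle\,q_2^{-1}(S)\bigr|\ \le\ M\,|\partial_D(S)|.
\]

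Finally I would assemble the two halves of the biconditional by a triangle inequality. Assuming $q_2$ is $k$-scaling, Theorem~\ref{BWScalingCriterion} gives, for any F\o lner sequence $S_i$ in $Y$, that $||q_2^{-1}(S_i)|-k|S_i||=O(|\partial_r(S_i)|)$ for a suitable scale $r$; then
\[
\bigl||q_1^{-1}(S_i)|-k|S_i|\bigr|\ \le\ M\,|\partial_D(S_i)|+\bigl||q_2^{-1}(S_i)|-k|S_i|\bigr|,
\]
and since $\partial_r(S)\subseteq\partial_{r'}(S)$ whenever $r\le r'$, both terms on the right are $O(|\partial_{\max(r,D)}(S_i)|)$. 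Applying Theorem~\ref{BWScalingCriterion} in the reverse direction shows $q_1$ is $k$-scaling, and the symmetric argument gives the converse. The only point requiring care is the bookkeeping of the boundary scale: the symmetric-difference estimate naturally produces the boundary at scale $D$ rather than at the scale $r$ supplied by the scaling hypothesis, so I must invoke the monotonicity of $|\partial_r(\cdot)|$ in $r$ to absorb both error terms into a single boundary quantity. This is the main, though mild, obstacle.
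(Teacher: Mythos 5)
Your proof is correct and follows essentially the same route as the paper's: the same key containment $q_1^{-1}(S)\,\triangle\,q_2^{-1}(S)\subseteq q_1^{-1}(\partial_D(S))$, the same bounded-to-1 property of quasi-isometries between UDBG spaces, and the same appeal to Theorem~\ref{BWScalingCriterion} after enlarging the boundary scale. You merely make explicit the multiplicity bound $B_{K^2}$ and the monotonicity of $|\partial_r(\cdot)|$ in $r$, which the paper leaves implicit.
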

	
	\begin{proof}
		Suppose $q_1$ is $\gamma$-scaling, $d_{\text{sup}}(q_1,q_2)=D$, and let $S_i$ be a F\o lner sequence in $Y$. From Theorem \ref{BWScalingCriterion},  $\bigl||q_1^{-1}(S_i)-\gamma|S_i|\bigr|=O(|\partial_r(S_i)|)$. Then if $x$ is a point so that $q_1(x)$ is in $S_i$ and $q_2(x)$ is not, or vice versa, $q_1(x)$ is within $D$ of $q_2(x)$, and thus within $D$ of both of $S_i$ and its complement. Then $q_1^{-1}(S_i)\Delta q_2^{-1}(S_i)\subset q_1^{-1}\partial_D(S_i)$. Applying Remark \ref{QIBoundedtoOne}, the two sets $q_1^{-1}(S_i)$ and $q_2^{-1}(S_i)$ differ in cardinality by $O(|\partial_D(S_i)|)$. After possibly increasing the value of $r$, we determine that $\bigl||q_2^{-1}(S_i)-\gamma|S_i|\bigr|=O(|\partial_r(S_i)|)$ and the lemma follows from Theorem \ref{BWScalingCriterion}. The reverse direction follows from  swapping the roles of $q_1$ and $q_2$.
	\end{proof}

	This lemma shows that we typically need not worry which representative of a given equivalence class in $QI(X)$. We will frequently apply maps which move every point a bounded distance without explicitly pointing this out.
	
	The following technical lemma describes approximate commutativity between the operations $q^{-1}$ and $\partial$ in the case of surjective quasi-isometries. This will help us achieve certain upper bounds on error terms. An analogous statement could be made to achieve a lower bound, but we will not need lower bounds on error terms in this paper.
	
	\begin{lemma}\label{SurjectiveQIs}
		
		Let $q:X\to Y$ be a surjective $K$ quasi-isometry and $S\subset Y$. Then $$q^{-1}(\partial_r (S))\subset \partial_{Kr+K} (q^{-1}(S))$$
		
	\end{lemma}
	
	\begin{proof}
		
		Let $y\in \partial_r(S)$, and $q(x)=y$. Let $y_1\in S$, $y_2\in S^c$, and $d_Y(y, y_i)<r$ for $i=1, 2$. Then if we pick points $x_1$ and $x_2$ so that $q(x_i)=y_i$, we see that $d(x, x_i)\le Kr+K$ for each $i$. Now, $x_1\in q^{-1}(S)$ and $x_2\in q^{-1}(S^c)$.  But for any function, $q^{-1}(S^c)=(q^{-1}(S))^c$, so that $x$ is within $Kr+K$ of both $q^{-1}(S)$ and its complement.
		
	\end{proof}
	
	Of course, a typical quasi-isometry is far from surjective.
	
	We conclude this section with a notational convention. We will at various points compute or refer to the existence of radii of neighborhoods and boundaries. This radius will always be denoted $r$. If the size of a neighborhood of boundary must be multiplied by a constant, that constant will be denoted $C$. Between portions of the paper, the values of these constants may change a finite number of times. Their exact numerical values will never matter, however.	
	
	\section{Sol and the 1-generator case}\label{Rank1Section}
	
	\subsection{F\o lner sets and Nets}
	
	We will begin with the case of a subgroup $\{\gamma^n: n\in\Z\} \subset\R^+$, for which we will construct a space based on a net in the Lie group Sol, which we will refer to as $G$ throughout this section.
	
	\begin{Def}
		Sol is topologically $\R^3$ with the group law  $$(a_1,b_1,c_1)(a_2,b_2,c_2)=(a_1+e^{c_1}a_2,b_1+e^{-c_1}b_2,c_1+c_2)$$ One may check directly that there is a normal subgroup $G_1=\{(a,b,0): a, b\in \R\}$ and a subgroup \newline $G_2=\{(0,0,c): c\in \R\}$ so that Sol is given by $G_1\rtimes G_2$. The action of $(0,0,c)$ on $G_1$ is given by $e^{c\alpha}$ where $\alpha$ is the matrix $\begin{bmatrix} 1 & 0\\ 0 & -1\end{bmatrix}$. For a point $(a,b,c)$ we will refer to $c$ as the \textit{height} or the $\textit{t-coordinate}$, and $a$ and $b$ as the $\textit{x-coordinate}$ and $\textit{y-coordinate}$ respectively. One may check directly that the Riemannian metric $ds^2 = e^{-2t}dx^2+e^{2t}dy^2+dt^2$ is left invariant.
	\end{Def}
	
	For the rest of the section, $d_G$ denotes the path distance arising from this Riemannian metric, and $\mu$ denotes the Haar measure.
		
	\begin{rk}\label{FlatsInSolRk}
		
		One useful way to view Sol is that there is a quasi-isometric embedding $q:\text{Sol}\to \h^2\times\h^2$ as follows. If we give the two copies of $\h^2$ their upper half-plane coordinates, then $(x,y,t)\mapsto \bigl((x,e^{t}),(y,e^{-t})\bigr)$, so that the parameter space is $\R^4$. That is, if $\h^2\times\h^2=\{\bigl((x,e^{t_1}),(y,e^{t_2})\bigr)\}$, then Sol is quasi-isometric to the locus $t_1+t_2=0$ within $\h^2\times\h^2$. The $t$ direction in Sol is mapped to the $t_1-t_2$	 direction in this locus. Lines $L(s)=(a,b,s)$ in Sol are isometrically embedded, as are their images in $\h^2\times\h^2$. When we wish to refer to this, we will say that the \textit{t-direction is undistorted}. 
		
		From the viewpoint of this embedding into $\h^2\times\h^2$, we see that $d_G((x,y,t),(x+k,y,t))=O_t(\log(|k|))$, and similarly $d_G((x,y,t),(x,y+k,t))=O_t(\log(|k|))$. This is because of the growth of distances in the first and respectively second copy of $\h^2$. 
		
	\end{rk}
	
	With respect to its Haar measure $\mu$ and Riemanian path metric $d_G$, Sol is amenable. An example F\o lner sequence is any collection of \textit{box sets} of the form  $[0,n)\times [0,m)\times [-\log(m),\log(n))$ when $nm>1$ and $nm\to \infty$ \cite{DymarzNavas}.
	
	\begin{rk}\label{SolMeasureRk}
		In Sol, the Haar Measure is $dxdydt$ since $\alpha$ has determinant $1$. Therefore a box \newline $[a_1, a_2]\times [b_1,b_2]\times [c_1,c_2]$ has  volume $(a_2-a_1)(b_2-b_1)(c_2-c_1)$ as in $\R^3$. This equality then extends to all measurable sets by considering unions. However, unlike in $\R^3$, it is possible to construct F\o lner sequence fixing one of $m$ or $n$ and letting the other grow.
	\end{rk}
	
	We will now construct a discrete subset of Sol for which the counting measure will approximate the Haar measure. For that, we make the following definition.
	
	\begin{Def}\label{NetDef}
		Let $(X,d)$ be a metric space. A \textit{net} in $X$ is a discrete subset $N\subset X$ which is coarsely dense in $X$, i.e. so that there exists $D$ such that $d(x,N)<D$ for each $x\in X$. 
	\end{Def}
	
	We construct a uniformly discrete net following \cite{DymarzNavas}.
	
	\begin{Def}\label{NDef}
		Denote by $N$ the collection $\{t^{n_1}x^{n_2}y^{n_3}:n_i\in\Z\}$. Note that in coordinates, we express these points as $t ^{n_1}x^{n_2}y^{n_3}=(e^{n_1}n_2,e^{-n_1}n_3,n_1)$. We equip $N$ with the metric that is the restriction of $d_G$, which we will refer to as $d_N$ by abuse of notation. 
	\end{Def}
	
	For the remainder of this section, for a subset $A\subset G$, we will set the notation that $\partial^G_r(A)$ denotes the set of points within $r$ of both $A$ and $G\setminus A$. On the other hand, if $A$ is also a subset of $N$, then $\partial^N_r(A)$ denotes the set of net points within $r$ of both $A$ and $N\setminus A$. We reserve the symbol $\partial_r$ with no superscript for later.
	
	We now describe in what sense $N$ approximates Sol.
	
	\begin{rk}
		
		Though $N$ is not a subgroup, it nevertheless has some sense of a fundamental domain as follows.	Let $B$ denote the parameter box $[0,1)\times[0,1)\times[0,1)\subset G$. Translating $B$ by elements $t^{n_1}x^{n_2}y^{n_3}$ with $n_i$ integers gives a disjoint collection of isometric boxes $[e^{n_1}n_2,e^{n_1}n_2+1)\times[e^{-n_1}n_3,e^{-n_1}n_3+1)\times[n_1,n_1+1)$. We will refer to $t^{n_1}x^{n_2}y^{n_3}\cdot B$ as $B_{n_1, n_2,n_3}$ One may check directly that these boxes partition Sol. See Figure 1 for a picture of how these boxes look at different heights.
		
	\end{rk}
	
	\begin{figure}
				
		\centering
		\includegraphics{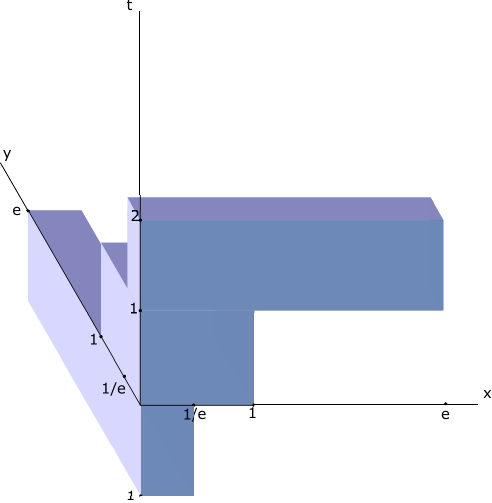}
		
		\caption{The special boxes $B_{1,0,0}$ (top) $B_{0,0,0}$ (middle), and $B_{-1,0,0}$ (bottom) in $(x,y,t)$-coordinates}
		
	\end{figure}
	
	We will call the $B_{n_1,n_2,n_3}$ \textit{special boxes} to distinguish them from other box sets. We will also denote by $D_B$ the diameter of $B$ and thus of all special boxes. We see immediately that $\mu(B)=1$, and therefore all the special boxes have measure $1$ by left-invariance of $\mu$.
	
	These boxes allow us to pass between quasi-isometries of $N$ and quasi-isometries of $G$ as follows. 
	
	\begin{Def}
		For any point $(a,b,c)$ in $G$, we define $\rho(a,b,c)$ to be $(e^{n_1}n_2,e^{-n_1}n_3,n_1)$ for the unique triple so that $(a,b,c)$ is in the special box $B_{n_1,n_2,n_3}$.
	\end{Def} 
	
	In concrete terms, $\rho$ sends every box in $G$ to the corner it contains. This map evidently moves points a finite distance. Precomposing with $\rho$ allows us to extend any quasi-isometry $q:N\to N$ to a new quasi-isometry $\overline{q}:G\to G$. Restricting to $N$ and postcomposing with $\rho$ allows us to approximate any quasi-isometry $\overline{q}:G\to G$ with one sending $N$ to $N$. Notice that $\rho\circ\overline{q}\approx\overline{q}\approx\overline{q}\circ\rho$. 
	
	The boxes will also allow us to approximate sets in convenient ways.
	
	\begin{Def}
		
		Given a subset $A$ of $G$, we define the set $\ext{A}$ to be 
		$$\ext{A}=\bigsqcup_{B_{n_1,n_2,n_3}\cap A\ne\emptyset} B_{n_1, n_2, n_3}$$
		and the set $\int{A}$ to be
		$$\int{A}=\bigsqcup_{B_{n_1,n_2,n_3}\subset A} B_{n_1,n_2,n_3}$$
		
		That is, $\ext{A}$ is the union of special boxes meeting $A$, and $\int{A}$ is the union of special boxes contained in $A$.
		
	\end{Def}
	
	We will routinely need the following comparisons.
	
	\begin{rk}\label{ExtAndInt}
		
		Let $A$ be a measurable subset of $G$. We have inclusions
		
		$$S\setminus \partial^G_{D_B}(A)\subset \int{A}\subset A\subset \ext{A}\subset A\cup\partial^G_{D_B}(A)$$
		
		As a result, if $A$ has finite measure, we have the inequalities of measures
		
		\begin{align*}
			&\mu(A)-\mu(\partial^G_{D_B}(A))\le\mu(A\setminus \partial^G_{D_B}(A))\le\mu(\int{A})\le\mu(A)\\
			&\mu(A)+\mu(\partial^G_{D_B}(A))\ge \mu(A \cup \partial^G_{D_B}(A))\ge \mu(\ext{A})\ge \mu(A)
		\end{align*}
		
		and the inequalities of cardinalities

		\begin{align*}
			&|A\cap N| - |\partial^G_{D_B}(A)\cap N| \le \bigl|\bigl(A\setminus\partial^G_{D_B}(A)\bigr)\cap N\bigr|\le|\int{A}\cap N|\le|A\cap N|\\
			&|A\cap N| +|\partial^G_{D_B}(A)\cap N|\ge \bigl|\bigl(A\cup \partial^G_{D_B}(A)\bigr)\cap N\bigr| \ge |\ext{A}\cap N| \ge |A\cap N|
		\end{align*}
		
		Moreover, since $\int{A}$ and $\ext{A}$ are disjoint unions of special boxes, each of which have volume $1$ and contain one point of $N$, it follows that $$\mu(\int{A})=|\int{A}\cap N|$$
		and
		$$\mu(\ext{A})=|\ext{A}\cap N|$$
		
		These equalities let us interchange between the two inequality chains above, e.g. we can conclude that $\mu(A)-\mu(\partial^G_{D_B}(A))\le \bigl|\bigl(A\cup \partial^G_{D_B}(A)\bigr)\cap N\bigr|$
				
	\end{rk}
	
	Since we will sometimes use Remark \ref{ExtAndInt} in cases where the set $A$ is itself a boundary, we will need the following lemma. Its proof is an easy exercise.
	
	\begin{lemma}\label{BoundaryAdditivity}
		
		Let $A$ be a subset of Sol, and $r_1$ and $r_2$ positive real numbers. Then 	
		
		$$\partial^G_{r_2}(A)\cup\partial^G_{r_1}\left(\partial^G_{r_2}(A)\right)\subset \partial^G_{r_1+r_2}(A)$$
		
	\end{lemma}
		
	We next prove that $N$ is an amenable net with bounded geometry. All of the required properties will follow from the isometric left action of $G$ on itself. We must be careful, however, because the net $N$ arises from the action of a set of elements of $G$ that is not a subgroup. So $N$ is invariant only under the left action of integer powers of $t$, which will turn out to suffice. If we had taken a lattice for $N$, these lemmas would be trivial, but we would encounter technical difficulties in Section 4 when generalizing to higher-rank Lie Groups. 
	
	\begin{lemma}\label{NPropertiesLemma}
		$N$ is a uniformly discrete net with bounded geometry.
	\end{lemma}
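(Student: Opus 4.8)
The plan is to verify the three defining properties in turn: that $N$ is coarsely dense, that it is uniformly discrete, and that it has bounded geometry. The organizing observation is that the special boxes $B_{n_1,n_2,n_3}$ tile $G$. Since they are the left-translates of $B=[0,1)^3$ by $t^{n_1}x^{n_2}y^{n_3}=(e^{n_1}n_2,e^{-n_1}n_3,n_1)$, a direct computation gives $B_{n_1,n_2,n_3}=\{(e^{n_1}(n_2+u),e^{-n_1}(n_3+v),n_1+w):u,v,w\in[0,1)\}$. For fixed $n_1$ the height ranges over $[n_1,n_1+1)$ while the $x$- and $y$-parameters range over $[e^{n_1}n_2,e^{n_1}(n_2+1))$ and $[e^{-n_1}n_3,e^{-n_1}(n_3+1))$, so as $(n_1,n_2,n_3)$ ranges over $\Z^3$ these sets partition $\R^3=G$. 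Because left-translation is an isometry of the left-invariant metric, every special box has the same finite diameter $D_B$ as $B$. Coarse density is then immediate: each point of $G$ lies in a unique special box and hence within $D_B$ of that box's corner, which is exactly the point of $N$ picked out by $\rho_N$.

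The uniform-discreteness step requires more care. In parameter coordinates, neighboring corners at height $n_1$ are separated by $e^{n_1}$ in $x$ (or $e^{-n_1}$ in $y$), and one might fear that the Sol geometry shrinks these separations to zero as $|n_1|\to\infty$. To control this honestly I would project onto the two hyperbolic factors of Remark \ref{FlatsInSolRk}. The metric $e^{-2t}dx^2+e^{2t}dy^2+dt^2$ dominates each of $e^{-2t}dx^2+dt^2$ and $e^{2t}dy^2+dt^2$, so the coordinate projections $\pi_1\colon(x,y,t)\mapsto(x,t)$ and $\pi_2\colon(x,y,t)\mapsto(y,t)$ are $1$-Lipschitz onto copies of $\h^2$, and the height projection $(x,y,t)\mapsto t$ is $1$-Lipschitz onto $\R$. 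Two distinct corners either differ in height, in which case $|n_1-n_1'|\ge1$ forces $G$-distance at least $1$; or they share a height $n_1$ and differ in, say, the $n_2$ index. In the latter case $\pi_1$ sends them to points of $\h^2$ at common Euclidean height $e^{n_1}$ with horizontal separation $e^{n_1}|n_2-n_2'|$, so their hyperbolic distance depends only on the ratio $|n_2-n_2'|\ge1$ and is bounded below by a positive constant independent of $n_1$. The same bound holds via $\pi_2$ when $n_3\ne n_3'$, and the minimum of these two constants and $1$ is the desired uniform lower bound.

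For bounded geometry I would run a volume-packing argument. By Remark \ref{SolMeasureRk} the Haar measure is $dx\,dy\,dt$, which also equals the Riemannian volume (the metric tensor has determinant $1$), so each special box has volume $1$ and the boxes are disjoint. If $q\in N$ lies within $r$ of a center $p$, then its box lies inside $B_G(p,r+D_B)$, since every point of that box is within $D_B$ of $q$. Summing the disjoint unit volumes gives $|B_G(p,r)\cap N|\le\mathrm{vol}\,B_G(p,r+D_B)$. As both the metric and the volume are left-invariant, $\mathrm{vol}\,B_G(p,R)=\mathrm{vol}\,B_G(e,R)$ is finite and independent of $p$, yielding the required uniform cardinality bound.

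The main obstacle is the uniform-discreteness step. Its content is precisely that the corners do not accumulate at large heights even though their parameter separations degenerate; the clean way around this is the observation that the relevant hyperbolic separations are scale-invariant ratios rather than absolute parameter distances, so the $1$-Lipschitz projections convert a potentially delicate Sol-geometry estimate into an elementary one.
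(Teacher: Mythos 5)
Your proof is correct, and it takes a genuinely different route from the paper's on two of the three properties. Coarse density is handled identically (the tiling by boxes isometric to $B$, each within $D_B$ of its corner). For uniform discreteness, the paper exploits the fact that $N$ is invariant under left translation by integer powers of $t$: it reduces any same-height pair of net points to height $0$ and checks there that the nearest neighbors of $(x,y,0)$ are $(x\pm1,y,0)$ and $(x,y\pm1,0)$. You instead use that the coordinate projections onto the hyperbolic factors of Remark \ref{FlatsInSolRk} are $1$-Lipschitz, and that same-height corners project to points of $\h^2$ whose horizontal-separation-to-height ratio is at least $1$ --- a scale-invariant quantity, so the lower bound is uniform in $n_1$ by inspection. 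This sidesteps the partial-invariance bookkeeping the paper explicitly flags (it warns that $N$ is the orbit of a set that is not a subgroup, and is invariant only under $t^{\Z}$). For bounded geometry, the paper counts net points height-slice by height-slice, translating to height $0$ and using the logarithmic growth of distance in parameter coordinates to get the explicit bound $(2r+1)4D^2$; you instead pack the disjoint unit-volume special boxes of the net points inside $B_G(p,r+D_B)$ and invoke left-invariance of the volume together with finiteness of ball volumes (which holds since the left-invariant metric is complete and homogeneous, so closed balls are compact). Your packing argument yields less explicit constants but is more portable: it uses only unimodularity and the measure-$1$ tiling, so it transfers verbatim to the higher-rank groups of Section \ref{HigherRankSection}, where the paper asserts the analogous lemma holds by ``almost verbatim the same arguments.'' Both arguments are complete; yours trades the paper's elementary translation trick for two standard geometric inputs (hyperbolic scale invariance and volume packing).
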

	
	\begin{proof}
		
		$B_{n_1,n_2,n_3}$ is isometric to $B$, so that each $B_{n_1,n_2,n_3}$ is contained in the $D_B$-neighborhood of $t^{n_1}x^{n_2}y^{n_3}$. This shows coarse density.
		
		Since the heights of net points are integers, and the $t$-direction is undistorted, net points at different heights are a distance at least $1$ away from one another. If two net points are at the same height, then by left translating by an integer power of $t$, we may assume that they are at height $0$. It is simple to check that each point $(x,y,0)$ has the four equidistant points $(x\pm 1, y, 0), (x, y\pm1, 0)$ as its nearest neighbors among points at height $0$, which provides a lower bound for distances to points at the same height.
		
		Let $B(p,r)$ be a ball in $G$, where $p$ is a point in $N$. To bound $|B(p,r)\cap N|$, Consider $\ext{B(p,r)}$. By Remark \ref{ExtAndInt},
		
		$$|\ext{B(p,r)}\cap N|=\mu(\ext{B(p,r)})\le \mu\bigl(B(p, r)\cup\partial_{D_B}(B(p,r))\bigr)$$
		
		Because Sol is a path metric space, $B(p, r)\cup\partial_{D_B}(B(p,r))=B(p,r+D_B)$. Then since $\ext{B(p,r)}$ contains $B(p,r)$, it follows that $\left(B(p,r)\cap N\right)\subset \left(\ext{B(p,r)}\cap N\right)$. Therefore, we have 
		
		$$|B(p,r)\cap N|\le \mu(\ext{B(p,r)})\le \mu(B(p, r+D_B))$$
		
		The rightmost term is a constant independent of $p$ by the left-invariance of the Haar measure. This therefore bounds the cardinality of an $r$-ball in $N$ independent of its center, so that $N$ has bounded geometry.
		
	\end{proof}

	The proof that $N$ is amenable will rely on techniques that will be repeated several times in the paper, and therefore several lemmas are in order. The goal is to to intersect a F\o lner sequence from Sol with the net, and control the error that arises from switching from measures to cardinalities or vice versa.
	
	\begin{lemma} \label{VolumePtCountComparison}
		
		For any compact measurable subset $A\subset G$, 
		
		$$|\mu(A)-|A\cap N||\le \mu(\partial^G_{D_B} (A))$$
		
	\end{lemma}
	
	\begin{proof}
		
		Rearranging the second inequality in Remark \ref{ExtAndInt} yields
		
		$$-\mu(\partial^G_{D_B}(A))\le \mu(\int{A})-\mu(A)$$
		
		and equivalently
		
		$$\mu(A)-\mu(\int{A})\le \mu(\partial^G_{D_B}(A))$$
		
		Similarly,
		
		$$\mu(\ext{A})-\mu(A)\le\mu(\partial^G_{D_B}(A))$$
		
		From the last two inequalities in Remark \ref{ExtAndInt}, $$|\int{A}\cap N|=\mu(\int{A})\le \mu(A)\le \mu(\ext{A})=|\ext{A}\cap N|$$  
		
		Finally, since $\int{A}\subset A\subset \ext{A}$, it follows that $\int{A}\cap N\subset A\cap N\subset \ext{A}\cap N$. We therefore conclude that
		
		$$|A\cap N|-\mu(A)\le \mu(\ext{A})-\mu(A)\le\partial^G_{D_B}(A)$$
		
		and
		
		$$\mu(A)-|A\cap N| \ge \mu(A)-\mu(\int{A})\ge \partial^G_{D_B}(A)$$
		
	\end{proof}
	
	The second lemma will permit us to bound the cardinalities of an $N$-boundary by the measures of a slightly larger $G$-boundary.
	
	\begin{lemma}\label{NBoundarytoGBoundary}
		
		Let $A'$ be a finite subset of $N$, and $r$ a positive number, and $A$ a subset of $G$ so that $A\cap N=A'$. Then $|\partial^N_r(A')|\le\mu\bigl(\partial^G_{r+D_B}(A)\bigr)$
		
	\end{lemma}
	
	In particular the case $A=A'$ is allowed. However, this will mostly be relevant when applied to a F\o lner sequence of sets $A$ and their intersections with the net.
	
	\begin{proof}
		
		If $p$ is within $r$ of points $p_1\in A'$ and $p_2\in N\setminus A'$, then $p_1\in A$, while $p_2\in G\setminus A$ because $A\cap N=A'$. Thus $p$ is in $\partial^G_r(A)$. But then each special box containing such a point $p$ is within $r+D_B$ of $A$ and $G\setminus A$, and each such special box has measure 1. 
		
	\end{proof}

	\begin{cor} \label{NAmenabilityCor}
		Let $S_i$ be a F\o lner sequence for $G$. Then $S_i\cap N=S_i'$ is a F\o lner sequence for $N$. In particular, $N$ is amenable.
	\end{cor}
	
	\begin{proof}
		
		By Lemma \ref{VolumePtCountComparison},
		
		$$\frac{|\partial^N_r(S_i')|}{|S_i'|}\le \frac{|\partial^N_r(S_i')|}{|\mu(S_i)-\mu(\partial^G_{D_B}(S_i))|}=\frac{|\partial^N_r(S_i')|}{\mu(S_i)}\frac{1}{\left|1-\frac{\mu(\partial^G_{D_B}(S_i))}{\mu(S_i)}\right|}$$
		
		Applying Lemma $\ref{NBoundarytoGBoundary}$ gives 
		
		$$\frac{|\partial^N_r(S_i')|}{\mu(S_i)}\frac{1}{\left|1-\frac{\mu(\partial^G_{D_B}(S_i))}{\mu(S_i)}\right|}\le \frac{\mu\bigl(\partial^G_{r+D_B}(S_i)\bigr)}{\mu(S_i)}\frac{1}{\left|1-\frac{\mu(\partial^G_{D_B}(S_i))}{\mu(S_i)}\right|} $$
		
		The term $\frac{1}{\left|1-\frac{\mu(\partial^G_{D_B}(S_i))}{\mu(S_i)}\right|}$ goes to $1$ as $i\to\infty$, and the term $\frac{\mu\bigl(\partial^G_{r+D_B}(S_i)\bigr)}{\mu(S_i)}$ goes to $0$ as $i\to\infty$, in both cases since the $S_i$ form a F\o lner sequence in $G$. 
		
	\end{proof}
	
		We conclude the subsection with two lemmas that provide a partial converse to Lemma \ref{NBoundarytoGBoundary}.
	
		\begin{lemma}\label{GBoundarytoNBoundarySpecial}
		
		For each $r>0$, there is some $r_1=r_1(r)$ so that if $S$ is any union of special boxes \newline $S=\bigsqcup_{i=1}^k B_{n_{1,i},n_{2,i},n_{3,i}}$, then $\mu(\partial^G_r(S))\le|\partial^N_{r_1}(S\cap N)|$.
		
	\end{lemma} 
	
	\begin{lemma} \label{GBoundarytoNBoundaryBox}
		
		For each $r>0$ and $c>0$, there is some $r_1=r_1(r,c)$ so that the following holds. Let $c_1, c_2$ be real numbers and $|c_i|<c$. Then for any box $S=[a_0,a_1)\times[b_0,b_1)\times[-\log(b_1-b_0)+c_2, \log(a_1-a_0)+c_1)$ where $[-\log(b_1-b_0)+c_2, \log(a_1-a_0)+c_1)$ contains an integer, $\mu(\partial^G_r(S))\le|\partial^N_{r_1}(S\cap N)|$.
		
	\end{lemma}
	
	We will prove the two of these lemmas simultaneously, since the proofs will be essentially the same.
	
	\begin{figure}
		
		\centering
		\includegraphics{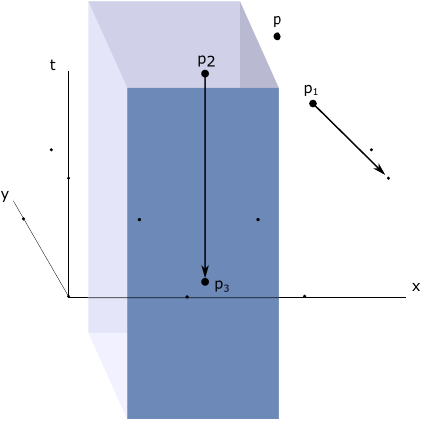} \hspace{1 cm} \includegraphics{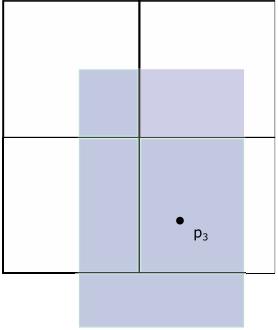}
		\caption{The proof of Lemma \ref{GBoundarytoNBoundaryBox}. On the left, bold points are $p$ and the $p_i$, and non-bold points are elements of $N$ other than $p$. The point $p_1$ is in the box $B_{1,0,0}$, and lies to the right of the box $S$. Therefore, it is moved to the net point $tx$ in the box $B_{1,1,0}$, which is even further to the right. $S$ is too thin in the $x$-direction to contain any net points at height $1$, so $p_2$ is moved down to height $0$ where $S$ is required to contain a net point. On the right is a view of the height-$0$ slice of the box $S$.}
		
	\end{figure}

	\begin{proof}
		
		Firstly, note that by Remark \ref{ExtAndInt} and Lemma \ref{BoundaryAdditivity},
		
		$$\mu(\partial^G_r(S))\le|\partial^G_{r+D_B}(S)\cap N|$$
		
		Therefore to get $r_1$ we must increase $r+D_B$ enough to guarantee that every net point $p$ in $\partial^G_{r+D_B}(S)$ also appears in $\partial^N_{r_1}(S\cap N)$. So let $p$ be within $r+D_B$ of points $p_1\in G\setminus S$ and $p_2\in S$.
		
		If $S$ is a union of special boxes, then $p_2$ not being in $S$ implies that $\rho(p_2)$ is not in $S$ either, so that there is a net point missing $S$ at distance at most $r+2D_B$ from $p$.
		
		If $S$ is instead a box, and $p_1=(x,y,t)$, then $x$, $y$, or $t$ lies outside of the interval defining $S$. Since $\rho$ reduces each coordinate, if any coordinate of $p_2$ is less than the defining interval of $S$, then $\rho(p_2)$ will be outside of $S$. $\rho(p_2)$ is at distance at most $r+2D_B$ from $p$. If instead some coordinate of $p_2$ is greater than the defining interval of $S$, suppose $p_2$ is in box $B_{n_1,n_2,n_3}$. Then the net points $t^{n_1+1}x^{n_2}y^{n_3}$, $t^{n_1}x^{n_2+1}y^{n_3}$, and $t^{n_1}x^{n_2}y^{n_3+1}$ all lie in the boundary of $B_{n_1,n_2,n_3}$, and are therefore at most $D_B$ from $p_2$. Whichever coordinate of $p_2$ lies above the defining interval of $S$, one of these three net points has a greater value of that coordinate than $p_2$, and therefore one of these three net points is outside $S$. So again we find a net point within $r+2D_B$ from $p$ that is outside $S$.
		
		On the other hand, we must move $p_2$ to a net point in $S$. If $S$ is a union of special boxes, then rounding $p_2$ yields this point, at distance at most $r+2D_B$ from $p$. Lemma \ref{GBoundarytoNBoundarySpecial} then holds with $r_1=r+2D_B$
			
		If $S$ is a box, then we wish to find a height at which $S$ is required to contain net points. Any such height is necessarily integral. At height $h$, the special boxes have width $e^{h}\times e^{-h}$. Therefore, at a height in $[-\log(b_1-b_0),\log (a_1-a_0))\cap [-\log(b_1-b_0)+c_2,\log (a_1-a_0)+c_1)$, the box $S$ is at least $(a_1-a_0)e^{-\log(a_1-a_0)}=1$ special box long in the $x$-direction and at least $(b_1-b_0)e^{-\log(b_1-b_0)}=1$ special box wide in the $y$-direction. See figure 2 for some example pictures. Since $|c_i|<c$, we conclude that $S$ contains net points at any integral height in the range $[-\log(b_1-b_0)+c, \log(a_1-a_0)-c)$. These heights exist by assumption.
		
		Since $p_2$ is at a height in $[-\log(b_1-b_0)+c_2, \log(a_1-a_0)+c_1)$, it is a most a distance $2c$ in the $t$-direction away from the interval $[-\log(b_1-b_0)+c, \log(a_1-a_0)-c)$.  Therefore moving $p_2$ vertically a distance at most $2c+1$ reaches a point $p_3$ at integral height $n$ in $S$. Suppose $p_3$ is in box $(a, b, n)$. As shown in Figure 2, $S$ contains the lower left corner of at least one of the four boxes $(a, b, n)$, $(a+1, b, n)$, $(a, b+1, n)$ or $(a+1, b+1, n)$. All such corners are on the boundary of box $(a,b,n)$ so that they are at most $D_B$ from $p_3$. Therefore, $p$ is a distance at most $r+2c+1+2D_B$ from a point in $S\cap N$. Lemma \ref{GBoundarytoNBoundaryBox} then follows with $r_1=\max\{r+2c+1+2D_B,r+2D_B\}$.
		
	\end{proof}
	
	\begin{cor}\label{GBoundarytoNBoundaryBigO}
		
		If $S_i$ is a sequence either of unions of special boxes, or of boxes \newline $[a_{0,i},a_{1,i})\times[b_{0,i},b_{1,i})\times[-\log(b_{1,i}-b_{0,i})+c_{1,i}, \log(a_{1,i}-a_{0,i})-c_{2,i})$ with $c_{j,i}$ bounded, then \newline $\mu(\partial^G_r(S))=O(\bigl|\partial^N_{r_1}(S\cap N)\bigr|)$, with $r_1$ as before.	
		
	\end{cor}

	\subsection{Scaling in the Net} \label{SolScalingIsAffine} 
	
	Since quasi-isometries on $N$ give rise to quasi-isometries on $G$ and vice versa, we will classify scaling quasi-isometries on $N$ by way of a structure theorem for $QI(G)$. This is \cite{EFW1} Theorem 2.1, together with Definition 2.2 and the remarks following it. The proof is completed in \cite{EFW2}.
	
	\begin{thm}[\cite{EFW1,EFW2}] \label{EFWMainThm}
		A quasi-isometry $\overline{q}:G\to G$ has a companion quasi-isometry $\overline{q}_1$ so that $\overline{q}\approx \overline{q}_1$ with $\overline{q}_1=L_g\circ (f_1,f_2,id)\circ \sigma$ for $L_g$ a left translation by a group element, $f_1, f_2$ Bi-Lipschitz maps of $\R$, and $\sigma\in S_2$ acting by possibly swapping the coordinate axes and negating the $t$-coordinate.
	\end{thm}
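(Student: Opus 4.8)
The statement is the Eskin--Fisher--Whyte classification, so the plan is to follow the structure of their coarse differentiation argument, using the embedding of $G$ into $\h^2\times\h^2$ from Remark \ref{FlatsInSolRk}. First I would set up the two transverse foliations of $G$: the level sets of the height function $h(a,b,c)=c$ are copies of $\R^2$ carrying the horizontal $x$-lines, which contract as $t\to+\infty$, and the vertical $y$-lines, which contract as $t\to-\infty$. A vertical geodesic (constant $x,y$, varying $t$) limits in the first hyperbolic factor to the boundary point $x\in\partial_-=\R_x$ as $t\to-\infty$, and in the second factor to $y\in\partial_+=\R_y$ as $t\to+\infty$; thus vertical geodesics are parametrized by $\R_x\times\R_y$. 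The first goal is to show that any quasi-isometry $q$ coarsely permutes vertical geodesics, and hence induces a map on $\R_x\times\R_y$.

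Second I would establish that $q$ coarsely preserves the height function up to an affine map of $\R$ of slope $\pm1$; this is what forces the $t$-coordinate to appear as $\pm t$ (the sign being recorded by $\sigma\in S_2$) and to be undistorted. The point is that the two horocyclic foliations are distinguished intrinsically by whether they contract going up or going down, so $q$ either preserves both (height preserved) or interchanges them (height reversed, together with the swap $x\leftrightarrow y$). Once height is controlled up to sign, composing with the appropriate element of $S_2$ and a left translation $L_g$ reduces to the case where $q$ preserves height and each foliation, and then $q$ descends to a pair $(f_1,f_2)$ acting on $\partial_-\times\partial_+$.

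The hard part, and essentially the entire content of the Eskin--Fisher--Whyte work, is to show that the induced boundary maps $f_1$ and $f_2$ are bi-Lipschitz rather than merely quasi-symmetric. This is where coarse differentiation enters. The plan is to restrict $q$ to a large box of $G$, subdivide it into a hierarchy of sub-boxes across many scales, and measure at each scale and location the failure of $q$ to be \emph{aligned}, i.e. to send a horizontal strip of the product foliation into a horizontal strip by a nearly affine map. The key estimate is that the total failure of alignment, summed over all scales along a geodesic, is controlled by the additive quasi-isometry constant via the monotonicity of height; a pigeonhole/averaging argument then shows that on all but a controlled fraction of (scale, location) pairs the map is nearly affine. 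Propagating this ``affine on almost all scales'' statement from the interior box to the boundary yields that $f_1$ and $f_2$ carry intervals to intervals with comparable length ratios, which is exactly the bi-Lipschitz conclusion.

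Finally I would reassemble: the bi-Lipschitz boundary data $(f_1,f_2)$ together with the height information recover a model quasi-isometry $q_1=L_g\circ(f_1,f_2,\mathrm{id})\circ\sigma$, and a direct estimate using the logarithmic relation between parameter distance and Riemannian distance (Remark \ref{FlatsInSolRk}) shows $q\approx q_1$. I expect the coarse differentiation step to be the main obstacle by a wide margin: the reduction to boundary maps and the height-preservation argument are comparatively soft and geometric, whereas upgrading the coarse boundary maps to genuine bi-Lipschitz maps is an analytic rigidity phenomenon for which there is no shortcut.
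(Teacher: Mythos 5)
You should first note that the paper does not prove this statement at all: it is quoted with the citation \cite{EFW2}, so the only meaningful comparison is against the actual Eskin--Fisher--Whyte argument. Measured against that, your outline has a genuine gap: you have placed the analytic core of the proof in the wrong step, and the steps you call ``comparatively soft and geometric'' are the ones that would fail. You propose to establish that $q$ coarsely permutes vertical geodesics and coarsely preserves the height function by the observation that ``the two horocyclic foliations are distinguished intrinsically by whether they contract going up or going down.'' This has no traction a priori: nothing distinguishes the foliations \emph{to $q$} until one already knows that $q$ coarsely sends leaves to leaves, and that is precisely the content of the theorem. For comparison, in $\h^2$ the horocyclic foliation at a fixed boundary point is equally ``intrinsic,'' yet quasi-isometries of $\h^2$ completely fail to preserve horocycles or Busemann functions; and Sol is not hyperbolic, so no Morse-type lemma forces vertical quasigeodesics to track vertical geodesics. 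The height-respecting property is exactly what Eskin--Fisher--Whyte's coarse differentiation is for: subdividing a large box over many scales, using the additive quasi-isometry constant amortized over scales together with the efficiency (near-monotonicity in height) of quasigeodesics to show that on most sub-boxes at most scales $q$ is close to a standard, foliation-respecting map, and then globalizing. Your proposal instead spends coarse differentiation on upgrading the boundary maps from quasisymmetric to bi-Lipschitz, but that step is the easy one.

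Indeed, once $q$ is known to be height-respecting (normalize by $\sigma$ and a left translation so that height is preserved up to additive error $O(1)$), the bi-Lipschitz regularity of $f_1,f_2$ follows by bookkeeping: two vertical geodesics with $x$-parameter difference $|x_1-x_2|$ come within distance $O(1)$ of one another at height roughly $\log|x_1-x_2|$ (this is the logarithmic relation of Remark \ref{FlatsInSolRk}), and since $q$ distorts heights only additively, $\log|f_1(x_1)-f_1(x_2)|=\log|x_1-x_2|+O(1)$, which is exactly the bi-Lipschitz condition; similarly for $f_2$ using heights tending to $+\infty$. So there is no separate analytic rigidity phenomenon at the boundary --- the exponential metric converts additive height control into multiplicative parameter control. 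If you reorganize your outline accordingly --- coarse differentiation first, to prove the height-respecting property, then the short boundary computation above, then the reassembly step you describe (which is fine as written) --- it becomes a faithful sketch of \cite{EFW1,EFW2}; as written, steps one and two assert the main theorem with a heuristic that does not constitute a proof.
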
 
	
	Notice that this finite-distance statement is unchanged if we round $\overline{q}_1$ to a map from $N$ to $N$. The purpose of this subsection is to use this theorem result to prove the following proposition.
	
	\begin{prop}\label{SolScalingIsAffineProp}
		Let $q:N\to N$ be a scaling quasi-isometry, inducing a map $\overline{q}:G\to G$ with companion map $\overline{q_1}$ in the sense of the previous theorem. Then the coordinate functions $f_1$ and $f_2$ of $\overline{q_1}$ are affine.
	\end{prop}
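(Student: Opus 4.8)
The plan is to use the preceding structure theorem to reduce the whole problem to how the boundary maps $f_1,f_2$ distort one-dimensional Lebesgue measure, and then to exploit the fact that a scaling map preserves box-measure up to boundary error. First I would dispose of the outer factors $L_g$ and $\sigma$. A left translation $L_g(a,b,c)=(a_1+e^{c_1}a,\,b_1+e^{-c_1}b,\,c_1+c)$ is affine in each coordinate separately and, since Sol is unimodular, measure preserving; the symmetry $\sigma$ is an affine, measure-preserving relabelling of coordinates. Hence both carry parameter boxes to parameter boxes and leave the measure-scaling statement and the affineness of $f_1,f_2$ unchanged, so it suffices to analyze $\overline{q_1}=(f_1,f_2,\mathrm{id})$. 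Since $f_1,f_2$ are monotone homeomorphisms, the preimage $\overline{q_1}^{-1}(S)$ of a box $S=J_1\times J_2\times J_3$ is again the box $f_1^{-1}(J_1)\times f_2^{-1}(J_2)\times J_3$.

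Next I would convert the scaling hypothesis into measure. Using Lemma \ref{ApproxInvarianceLemma} to replace $q$ by the rounding of $\overline{q_1}$, and the measure--count comparisons of Lemmas \ref{NAmenabilityLemma} and \ref{GBoundarytoNBoundary}, the criterion of Theorem \ref{BWScalingCriterion} becomes the statement that along any F\o lner sequence of parameter boxes $S$ in $G$ one has $\mu(\overline{q_1}^{-1}(S))=k\,\mu(S)+O(\mu(\partial^G_r S))$. Writing $S=J_1\times J_2\times J_3$ and cancelling the common $J_3$-factor, this reads
$$\frac{|f_1^{-1}(J_1)|}{|J_1|}\cdot\frac{|f_2^{-1}(J_2)|}{|J_2|}=k+O\!\left(\frac{\mu(\partial^G_r S)}{\mu(S)}\right).$$

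The heart of the argument is to arrange the boxes so that the error vanishes while only one horizontal extent grows. To isolate $f_2$, fix an interval $J_2$ and take boxes with height interval $[0,\tau)$ and $x$-extent of order $e^{2\tau}$, letting $\tau\to\infty$ (Remark \ref{SolMeasureRk} already allows one horizontal parameter to be frozen). A direct estimate in the exponential metric shows every face is negligible against the volume: the dangerous $x$-face has parameter width $\sim e^{t}$ and is tamed by the exponentially large $x$-extent, while the $y$- and $t$-faces are tamed by the growing height, so the error tends to $0$ along this sequence. Running the same $x$- and $t$-data against two fixed intervals $J_2,\tilde J_2$ produces two such relations sharing the (unknown, $\tau$-dependent) factor $|f_1^{-1}(J_1)|/|J_1|$; dividing them cancels that factor, and letting $\tau\to\infty$ forces
$$\frac{|f_2^{-1}(J_2)|/|J_2|}{|f_2^{-1}(\tilde J_2)|/|\tilde J_2|}=1.$$
Since the metric is homogeneous in $y$, the intervals $J_2,\tilde J_2$ are arbitrary, so $|f_2^{-1}(J)|/|J|$ is a constant; a map whose interval-lengths are scaled by a fixed constant has constant difference quotient, hence $f_2^{-1}$, and therefore $f_2$, is affine. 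Freezing the $x$-extent and instead growing the $y$-extent and height gives $f_1$ by the same reasoning.

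The main obstacle is precisely the boundary control in this last step: I must verify that these deliberately unbalanced boxes (one horizontal direction fixed, the other exponentially large, with matched height) are genuinely F\o lner in Sol, which is where the exponential distortion of the $x$- and $y$-directions and the bounded-geometry estimates underlying Lemmas \ref{NAmenabilityLemma} and \ref{GBoundarytoNBoundary} have to be combined with care. Once the error term is known to vanish along such a sequence, the division-and-limit trick that pins the ratio to a constant is elementary.
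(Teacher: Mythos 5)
Your proposal is correct, but it takes a genuinely different route from the paper, which argues by contraposition: there one assumes $f_1$ is \emph{not} affine, picks two disjoint intervals $I_1,I_2$ of equal length $l$ with image lengths $l_1'\ne l_2'$, pushes the boxes $I_i\times[0,r_j)\times[\,\cdot\,,\cdot\,)$ \emph{forward} under $\overline{q_1}$, verifies via an auxiliary comparison box $T_{i,j}$ (needed because the image has the wrong height for its distorted horizontal extents) that the images are still F\o lner, and then uses Theorem \ref{BWScalingCriterion} to show $q$ would have to scale by the two different constants $\frac{l_i'}{l}\lim_j \frac{r_j'}{r_j}$, a contradiction. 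You instead argue directly and \emph{pull boxes back}: $\overline{q_1}^{-1}(S)$ is exactly a parameter box, so you never have to check that an image set is F\o lner, and you can skip the harder boundary-comparison direction (Lemma \ref{GBoundarytoNBoundary}), which the paper needs precisely because its error terms live on the image side; your two-interval ratio trick along a common unbalanced F\o lner sequence then pins $|f_2^{-1}(J)|/|J|$ to a constant, whence affineness from the resulting difference-quotient identity. Your disposal of $L_g$ and $\sigma$ at the measure level (coordinatewise affine, unimodular, box-to-box) is likewise simpler than the paper's net-level argument that the rounded factors are coarsely 1-to-1, and it is adequate here since only measures of boxes enter your criterion. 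The crux you flag --- F\o lner-ness of the unbalanced boxes --- does check out under the stated metric $ds^2=e^{-2t}dx^2+e^{2t}dy^2+dt^2$: with height $[0,\tau)$, $x$-extent $e^{2\tau}$, and $J_2$ fixed, the $x$-faces contribute $O(e^{\tau}|J_2|)$, the $y$-faces $O(e^{2\tau})$, and the $t$-faces $O(e^{2\tau}|J_2|)$, all $o\bigl(e^{2\tau}|J_2|\tau\bigr)$, consistent with the allowance in Remark \ref{SolMeasureRk}. Two small points you should make explicit: dividing the two relations requires $k>0$ (true, since a quasi-isometry of UDBG spaces is bounded-to-one, so $k$ is bounded away from $0$), and the thickness estimate forces $|J_2|,|\tilde J_2|$ to be bounded below, so you first obtain $|f_2^{-1}(J)|=c|J|$ only for intervals of length at least $1$; the identity $f_2^{-1}(b)-f_2^{-1}(a)=\bigl(f_2^{-1}(a+2)-f_2^{-1}(a)\bigr)-\bigl(f_2^{-1}(a+2)-f_2^{-1}(b)\bigr)$ then extends the conclusion to all intervals. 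In short, the paper's contrapositive needs only one bad pair of intervals but pays with the image-side F\o lner verification; your direct pullback version trades that for the (correctly executed) F\o lner check on deliberately lopsided boxes and an elementary functional-equation finish.
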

	
	The first step of this proof is to show that it suffices to consider maps of a specific type.
	
	\begin{lemma} \label{ReducingToProduct}
		
		With notation as above, there is map $q':N\to N$ with $q'=\rho\circ \overline{q'}$, and $\overline{q}':G\to G$ given by $(f_1', f_2', id)$ so that $q$ differs (up to $\approx$) from $q'$ by a composition of coarsely one-to-one maps and $f_i'$ is affine if an only if $f_i$ is.	
		
	\end{lemma}
	
	\begin{proof}
		
		Since $\rho\circ\overline{q}_1|_N\approx q$, we may assume  $q=\rho(\overline{q}_1|_N)$.
		Let $\overline{q}_1=L_g\circ(f_1,f_2,id)\circ \sigma$. Write \newline $g=x^by^ct^a$ Denote $x^{b}y^{c}=g'$.
		
		Since $\rho$ moves points a finite distance, we find that 
		
		$$q\approx \rho\circ L_{g'}|_N \circ \rho \circ L_{t^a} \circ (f_1, f_2, id)|_N\circ \rho \circ \sigma|_N=q_2$$
		
		Notice that $\sigma(e^{n_1}n_2,e^{-n_1}n_3,n_1)=(e^{-n_1}n_3,e^{n_1}n_2,-n_1)=t^{-n_1}x^{n_3}y^{n_2}$. As a result $\sigma$ restricts to a  map $\sigma|_N:N\to N$ that is bijective. Also, $\rho\circ\sigma|_N=\sigma|_N$. Therefore
		
		$$q_2\circ\sigma|_N=\rho\circ L_{g'}|_N \circ \rho \circ L_{t^a} \circ (f_1, f_2, id)|_N=q_3$$
		
		and $q_3$ differs from $q$ by a composition of $\approx$ and a one-to-one map.
		
		Consider a point $(b_1, c_1, n)=t^nx^{e^{-n}b_1}y^{e^nc_1}$ where $n$ is an integer. Then 
		
		\begin{align*}
			t^a\cdot(b_1,c_1,n)&=t^{a+n}x^{e^{-n}b_1}y^{e^nc_1}\\
			&=(e^ab_1,e^{-a}c_1,n+a)\\
			&=R_{t_a}\circ (h_1,h_2, id) (b_1,c_1,n)
		\end{align*}
		
		Where $h_1(x)=e^a x$ and $h_2(y)=e^{-a}y$.

		Since this applies to any point at integral height, it applies in particular for $(f_1, f_2, id)\cdot N$. Therefore, writing $f_i'=h_i\circ f_i$, we conclude that on $N$,
		
		$$q_3= \rho\circ L_{g'}|_N\circ\rho\circ R_{t^{a}}\circ (f_1', f_2', id)|_N$$
		
		But right multiplication by a power of $t$ commutes with the map $(f_1',f_2', id)|_N$, and any right multiplication moves points a bounded distance (in this case, a distance of $a$ since the vertical direction is undistorted in Sol). As such,
		
		$$q_3\approx\rho\circ L_{g'}|_N\circ\rho\circ (f_1', f_2', id)|_N$$
		
		Next, let $t^{n_3}x^{n_1}y^{n_2}=(e^{n_3}n_1, e^{-n_3}n_2, n_3)$ be a net point. Applying $\rho\circ L_{g'}$ to $t^{n_3}x^{n_1}y^{n_2}$ yields
		
		\begin{align*}
			\rho\circ L_{g'} (t^{n_3}x^{n_1}y^{n_2}) &=\rho(x^by^ct^{n_3}x^{n_1}y^{n_2})\\
			&=\rho(x^by^ct^{n_3}x^{n_1}y^{n_2})\\
			&=\rho\bigl((b+e^{n_3}n_1), (c+e^{-n_3}n_2), n_3\bigr)
		\end{align*}
		
		That is, at each integer height $n_3$, $\rho\circ L_{g'}$ acts by a shift followed by rounding. Such a map is one-to-one on $N$.
		
		As a result, if we write $\overline{q}'=(f_1', f_2', id)$, and $q'=\rho\circ \overline{q}'|_N$ then
		
		$$q_3\approx \rho\circ L_{g'}|_N\circ q'$$
		
		so that $q_3$ is approximately a composition of $q'$ and one-to-one maps. Applying a coarse inverse to $\rho\circ L_{g'}|_N$ on the left shows that $q'$ is a composition of one-to-one maps with $q_3$, which is itself a composition of $q_2\approx q$ with a one-to-one map.
		
		Finally, note that $f_i'$ is a composition of a multiplication function with $f_i$, so that $f_i'$ is affine if and only if $f_i$ is.
		
	\end{proof}
	
	We will only prove Proposition \ref{SolScalingIsAffineProp} for $f_1$, since the since the proof for $f_2$ is equivalent. The following lemmas will therefore be asymmetric between $x$ and $y$, but the analogous statements with the roles of the variables reversed are true by the same arguments.
	
	We will henceforth assume $q=\rho\circ (f_1, f_2, id)$, and prove prove two preliminary lemmas about the images of certain F\o lner sequences under such quasi-isometries. Firstly, we show that the images are again F\o lner sequences
	
	\begin{lemma} \label{ImagesofFolnerSeqs}
		
		Denote $\lambda$ the Lebesgue measure on $\R$. Let $I\subset \R$ be an interval and $J_i\subset \R$ be a sequence of intervals whose lengths tend to $\infty$ so that $\lambda(I)\lambda(J_i)>1$. Then $S_i=I\times J_i\times [-\log(\lambda(J_i)),\log(\lambda(I)))$ is a F\o lner sequence. Let $\overline{q}$ be any $K$ quasi-isometry $\overline{q}:G\to G$ with $\overline{q}=(f_1, f_2, id)$, where $f_i$ are $K$ bi-Lipschitz. Then $\overline{q}(S_i)=S_i'$ is a F\o lner sequence.
		
	\end{lemma}
	
	\begin{proof}
		
		Let the sequence of sets $f_1(I)\times f_2(J_i)\times [-\log(\lambda (f_2(J_i))),\log(\lambda(f_1(I))))$ be denoted $T_i$. We will show $S_i'$ is a F\o lner sequence by comparing it to $T_i$.
		
		Since the $f_i'$ are $K$-bi-Lipschitz, they increase or decrease lengths by at most a factor of $K$. Hence, $-\log(\lambda(f_2(J_i)))\le -\log(\lambda(J_i))+\log(K)$, and $\log(\lambda(f_1(I)))\ge\log(\lambda(I))-\log(K)$. Also, it is immediate that $$|\log(\lambda(f_1(I))\lambda(f_2(J_i)))-\log(\lambda(I)\lambda(J_i))|\le 2\log(K)$$ 
		Therefore, as $i\to \infty$, $$\frac{\log(\lambda(f_1(I))\lambda(f_2(J_i)))}{\log(\lambda(I)\lambda(J_i))}\to 1$$ Therefore $\frac{\mu(S_i')}{\mu(T_i)}\to 1$ because $\mu\bigl[x_1, x_2)\times [y_1, y_2)\times [t_1, t_2)\bigr)=(x_2-x_1)(y_2-y_1)(t_2-t_1)$.
		
		On the other hand, $T_i$ is within the $\log(K)$-neighborhood of $S_i'$ and vice versa, as the vertical direction is undistorted in $G$. Then any point within $r$ of $T_i$ is within $r+\log(K)$ of $S_i'$ and vice versa. The same holds for the neighborhoods of the complements of both sets. It follows that 
		
		$$\partial^G_r(S_i')\subset \partial^G_{r+\log(K)}(T_i)$$  
		
		so that 
		
		$$\lim_{i\to\infty}\frac{\mu\left(\partial^G_r(S_i')\right)}{\mu(S_i')}\le \lim_{i\to\infty}\frac{\mu\left(\partial^G_{r+\log(K)}(T_i)\right)}{\mu(S_i')}=\lim_{i\to\infty}\frac{\mu\left(\partial^G_{r+\log(K)}(T_i)\right)}{\mu(T_i)}=0$$
		
	\end{proof}
	
	Next, we prove a criterion that lets us conclude that scaling values in $G$ and $N$ agree.
	
	\begin{lemma} \label{SolScalingisNetScaling}
		
		Let $\overline{q}=(f_1,f_2, id)$ be a $K$ quasi-isometry of $G$ and let $q=\rho\circ\overline{q}$ be a $K$ quasi-isometry of $N$. Let $S_i$ be a F\o lner sequence as in Lemma \ref{ImagesofFolnerSeqs}, with image $S_i'$, and $R_i'=S_i'\cap N$. Then if $\overline{q}$ is $\gamma$-scaling on $S_i'$, $q$ is $\gamma$-scaling on $R_i'$.
		
	\end{lemma}
	
	\begin{proof}
		
		Firstly, note that by Corollary \ref{NAmenabilityCor}, $R_i'$ is indeed a F\o lner sequence so that the statement makes sense. By removing some of the leading terms of the sequence $S_i$ if necessary, we may assume that \newline $[-\log(\lambda(I))+\log(K), \log(\lambda(J_i))-\log(K))$ contains an integer for all $i$. Since $f_1$ and $f_2$ are $K$ bi-Lipschitz this implies that there is an integral height in $R_i'$.
		
		By assumption,
		
		$$|\mu(S_i)-\gamma\mu(S_i')|\le C\mu(\partial^G_r(S_i'))$$
		
		Then we can use Lemma \ref{GBoundarytoNBoundaryBox} and Corollary \ref{GBoundarytoNBoundaryBigO}, so that 
		
		$$|\mu(S_i)-\gamma\mu(S_i')|=O(|\partial^N_{r}(R_i')|)$$
		
		We then wish to bound $\bigl|\mu(S_i)-|q^{-1}(R_i')|\bigr|$ and $\bigl|\mu(S_i')-|R_i'|\bigr|$ in terms of $|\partial^N_r(R_i)|$
		
		By Lemma \ref{VolumePtCountComparison}, $\bigl| |R_i|-\mu(S_i')\bigr|\le\mu(\partial^G_{D_B} (S_i'))$. By the argument above, we apply \ref{GBoundarytoNBoundaryBox} and Corollary \ref{GBoundarytoNBoundaryBigO} to deduce that
		
		$$\bigl||R_i'|-\mu(S_i')\bigr|=O\bigl(|\partial^N_{r_1}( R_i')|\bigr)$$
		
		Therefore, 
		
		$$\bigl|\mu(S_i)-\gamma |(R_i')|\bigr|=O\bigl(|\partial^N_{r}( R_i')|\bigr)$$
		
		It remains to count $|q^{-1}(R_i')|$. If a point $p\in N$ is mapped by $q$ into $R_i'=\overline{q}(S_{i})\cap N$, then since $q=\rho\circ \overline{q}$,	 $d_G(\overline{q}(p), \overline{q} (S_i))\le D_B$. Hence $p$ is in the $KD_B+K$ neighborhood of $S_i$ by the quasi-isometric inequality. Therefore,
		
		$$q^{-1}(R_i')\subset \left(S_i\cup\partial^G_{KD_B+K}(S_i)\right)\cap N$$
		
		Similarly, since $\overline{q}$ is a $K$ quasi-isometry, a point $p$ more than $KD_B+K$ from $G\setminus S_i$ is mapped under $\overline{q}$ more than $D_B$ from $\overline{q}(G\setminus S_i)$. Since $\overline{q}$ is additionally bijective, $\overline{q}(G\setminus S_i)=G\setminus\overline{q}(S_i)$. As a result $\rho\circ\overline{q}(p)\in \overline{q}(S_i)$. It follows that $$\left(S_i\setminus \partial^G_{KD_B+K}(S_i)\right)\cap N\subset q^{-1}(R_i)$$
		
		Applying the inequalities in Remark \ref{ExtAndInt} and Lemma \ref{BoundaryAdditivity}, we deduce the upper bound 
		
		\begin{align*}
			|q^{-1}(R_i')|&\le \bigl|\left(S_i\cup\partial^G_{KD_B+K}(S_i)\right)\cap N\bigr|\\
			&\le \mu\Big(\ext{\left(S_i\cup\partial^G_{KD_B+K}(S_i)\right)}\Big)\\
			&\le  \mu\left(S_i\cup\partial^G_{(K+1)D_B+K}(S_i)\right)\\
			&\le \mu(S_i)+\mu\left(\partial^G_{(K+1)D_B+K}(S_i)\right)
		\end{align*}
		
		And we deduce the lower bound
		
		\begin{align*}
			\mu(S_i)-\mu\bigl(\partial^G_{(K+1)D_B+K}(S_i)\bigr)&\le  \mu\bigl(S_i\setminus \partial^G_{(K+1)D_B+K}(S_i))\bigr)\\
			&\le \mu\bigl(\int{(S_i\setminus\partial^G_{KD_B+K}(S_i))}\bigr)\\
			&\le \bigl|S_i\setminus \partial_{KD_B+K}^G(S_i)\cap N\bigr|\\
			&\le |q^{-1}(R_i')|
		\end{align*}
		
		Therefore, $$\bigl|\mu(S_i)-|q^{-1}(R_i')|\bigr|=O(\mu(\partial^G_r(S_i)))$$ 
		As with the term $\mu(\partial^G_{D_B} (S_i'))$, we convert to an error term in  $O(|\partial^N_{r_1}(S_i\cap N)|)$ by applying Lemma \ref{GBoundarytoNBoundaryBox} and Corollary \ref{GBoundarytoNBoundaryBigO} with $c=0$. We finally need to show that  $|\partial^N_{r_1}(S_i\cap N)|=O(|\partial^N_{r_2}(R_i')|)$ for some $r_2$, in order to have all error terms of the desired form. 
		
		As in Lemmas \ref{NAmenabilityCor} and \ref{GBoundarytoNBoundaryBox}, any point $p$ in $\partial^N_{r_1}(S_i\cap N)$ is mapped by $\overline{q}$ to a point within $Kr_1+K$ of both $S_i$ and $G\setminus S_i'$. Thus it is mapped into $\partial^G_{Kr_1+K}(S_i')$. Rounding $\overline{q}(p)$ reaches a net point in $\partial^G_{Kr_1+K+D_B}(S_i')$. As $q=\rho\circ\overline{q}$  is a $K$ quasi-isometry, applying Remark \ref{QIBoundedtoOne} yields
		
		$$|\partial^N_{r_1}(S_i\cap N)|\le B_{K^2+K} |\bigl(\partial^G_{Kr_1+K+D_B}(S_i')\bigr)\cap N|$$

		We then use Lemma \ref{VolumePtCountComparison} and Lemma \ref{BoundaryAdditivity} to obtain the upper bound
		
		$$|\partial^N_{r_1}(S_i\cap N)|\le B_{K^2+K}\bigl(\mu(\partial^G_{Kr_1+K+D_B}(S_i'))+\mu (\partial^G_{Kr_1+K+2D_B}(S_i'))\bigr)$$
		
		As before, we then apply Lemma \ref{GBoundarytoNBoundaryBox} and Remark \ref{GBoundarytoNBoundaryBigO} so the $G$-boundary of $S_{i,j}'$ to determine that
		
		$$|\partial^N_{r_1}(S_i\cap N)|=O\bigl(|\partial^N_{r_2} (R_i)|\bigr)$$
		
		Combining with the above yields
		
		$$\bigl||q^{-1}(R_i')|-\gamma |(R_i')|\bigr|=O\bigl(|\partial^N_{r}(R_i')|\bigr)$$
		
	\end{proof}
	
	Before proving Proposition \ref{SolScalingIsAffineProp}, we record the following proposition as an additional corollary of the above lemmas.
	
	\begin{prop} \label{CalculatingScalingValues}
		Suppose $f_1$ and $f_2$ are $K$ bi-Lipschitz maps of $\R$, and let $\overline{q}=(f_1,f_2,id)$ be a $K$ quasi-isometry of $G$. Let $q=\rho\circ \overline{q}$ be quasi-isometry of $N$. Denote $\lambda$ the Lebesgue measure of $\R$. Suppose $I$ is an interval and $J_i$ is a sequence of intervals of lengths tending to $\infty$. If $\lambda(f_1(I))=\gamma_1\lambda(I)$, and there is some $D>0$ so that $|\lambda(f_2(J_i))-\gamma_2\lambda(J_i)|\le D$, then $N$ contains a F\o lner sequence $R_i'$ so that $q$ is $\frac{1}{\gamma_1\gamma_2}$ scaling on $R_i'$.
	\end{prop} 
	
	The condition that $|\lambda(f_2(J_i))-\gamma_2\lambda(J_i)|\le D$ is weaker than needed in the sequel. Note that $\frac{\lambda(f_2(J_i))}{\lambda(J_i)}\to \gamma_2$ is not a sufficient assumption as described in Remark \ref{ScalingImpliesRatio}. 
	
	\begin{proof}
		
		Denote $S_i=I\times J_i\times [-\log(\lambda(J_i)),\log(\lambda(I)))$ as before, $S_i'=\overline{q}(S_i)$, and $R_i'=S_i'\cap N$. $R_i'$ is a F\o lner sequence by Lemma \ref{ImagesofFolnerSeqs}, and by Lemma \ref{SolScalingisNetScaling}, it suffices to compute a scaling value on $S_i'$. Since the Haar measure of a box in $G$ agrees with its Lebesgue measure, we need to show that
		
		$$|\gamma_1\gamma_2\lambda(I)\lambda(J_i)\log(\lambda(I)\lambda(J_i))-\gamma_1\lambda(I)\lambda(f_2(J_i))\log(\lambda(I)\lambda(J_i))|\le C\mu(\partial^G_r(S_i'))$$
		
		Since the metric in Sol is given by $e^{-2t}dx^2+e^{2t}dy^2+dt^2$, and the box $S_i'$ is below height $\log(\lambda(I)))$, one may check directly that there is some $r=r(\lambda(I))$ so that $\partial^G_r(S_i')$ contains a box centered on one of the $xt$ faces of $S_i'$ whose width in the $y$-direction is at least $D$. Then $\mu(\partial^G_r(S_i'))\ge D \lambda(I) \log(\lambda(J_i)\lambda(I))$. Then $\overline{q}$ is scaling on $S_i'$ since $|\lambda(f_2(J_i))-\gamma_2\lambda (J_i)|\le D$, with scaling value $\frac{1}{\gamma_1\gamma_2}$.
		
	\end{proof}
	
	Note that we make no assumption above that $\overline{q}$ or $q$ actually are scaling. In the presence of such an assumption, we would conclude that these makes are $\frac{1}{\gamma_1\gamma_2}$-scaling, because scaling maps scale by the same factor on every F\o lner sequence. When we wish to deduce that $\overline{q}$ or $q$ are scaling from the ground up, we will use the following lemma.
	
	\begin{lemma}\label{ScalingValuesofAffineQIs}
		
		Let $f_1$ and $f_2$ be $K$ bi-Lipschitz affine maps $f_1(x)=m_1x+c_1$, $f_2(y)=m_2y+c_2$, and let $q$ and $\overline{q}$ be the $K$ quasi-isometries in Proposition \ref{CalculatingScalingValues}. Then $\overline{q}$ is $\frac{1}{m_1m_2}$-scaling on Sol and $q$ is $\frac{1}{m_1m_2}$-scaling on $N$.
		
	\end{lemma}
	
	\begin{proof}
		
		Let $A\subset N$ be finite, and consider $\ext{A}\subset G$. Since $\overline{q}^{-1}$ induces a constant $\frac{1}{m_1m_2}$ on the Haar measure $\mu$, we compute
		
		\begin{align}
			\bigl|\mu(\overline{q}^{-1}(\ext{A}))-\frac{1}{m_1m_2}\mu(\ext{A})\bigr|&=0\\
			\bigl|\mu(\overline{q}^{-1}(\ext{A}))-\frac{1}{m_1m_2}|A|\bigr|&=0\\
			\bigl||(\overline{q}^{-1}(\ext{A}))\cap N|-\frac{1}{m_1m_2}|A|\bigr|&\le \mu\bigl(\partial^G_{D_B}(\overline{q}^{-1}(\ext{A}))\bigr)\\
			\bigl||q^{-1}(A)|-\frac{1}{m_1m_2}|A|\bigr|&\le \mu\bigl(\partial^G_{D_B}(\overline{q}^{-1}(\ext{A}))\bigr)\\
			\bigl||q^{-1}(A)|-\frac{1}{m_1m_2}|A|\bigr|&\le \mu\bigl(\overline{q}^{-1}(\partial^G_{KD_B+K}(\ext{A}))\bigr)\\
			\bigl||q^{-1}(A)|-\frac{1}{m_1m_2}|A|\bigr|&\le \frac{1}{m_1m_2}\mu\bigl(\partial^G_{KD_B+K}(\ext{A})\bigr)\\
			\bigl||q^{-1}(A)|-\frac{1}{m_1m_2}|A|\bigr|&\le \frac{1}{m_1m_2}\bigl|\partial^N_{r}(A)\bigr|
		\end{align}
		
		Where we used Lemma \ref{VolumePtCountComparison} in line 3, Lemma \ref{SurjectiveQIs} in line 5, and Lemma \ref{GBoundarytoNBoundarySpecial} in line 7.
		
	\end{proof}
	
	We next prove Proposition \ref{SolScalingIsAffineProp}, which is restated below.
	
	\noindent \textbf{Proposition \ref{SolScalingIsAffineProp}}	Let $q:N\to N$ be a scaling quasi-isometry, inducing a map $\overline{q}:G\to G$ with companion map $\overline{q_1}$ in the sense of Theorem \ref{EFWMainThm}. Then the coordinate functions of $\overline{q_1}$ are affine.
	
	\begin{proof}
		
		By Lemma \ref{ReducingToProduct}, we may assume that $q=\rho\circ (f_1, f_2, id)=\rho\circ \overline{q_1}$. We prove that $q$ being scaling implies $f_1$ is affine by proving the contrapositive. So let $f_1$ be a non-affine map, so that we have $I_1$, $I_2$ disjoint $x$-intervals of the same length $l_x$, but $f_1(I_1)$ and $f_1(I_2)$ are disjoint intervals of different lengths $l'_{x,1}$ and $l'_{x,2}$.
		
		Let $S_{i,j}=I_j\times [0, l_{y,i})\times[-\log(l_x),\log(l_{y,i}))$ where $j=1,2$ and $l_{y,i}\to \infty$, and with $l_{y,i}$ large enough that the interval $[-\log(l_x)+\log(K),\log(l_{y,i})-\log(K))$ contains an integer. By Lemma \ref{ImagesofFolnerSeqs}, $\overline{q}(S_{i,j})=S_{i,j}'$ are F\o lner sequences in $G$. Then by Corollary \ref{NAmenabilityCor}, the intersections $S_{i,j}'\cap N=R_{i,j}'$ are F\o lner sequences in $N$.
		
		We wish to show that for any given $k$, $q$ is not $k$-scaling on one of the sequences $R_{i,j}'$. By Lemma \ref{SolScalingisNetScaling}, we can consider the scaling value of $\overline{q}$ on the sequences $S_{i,j}'$. By Remark \ref{ScalingImpliesRatio}, then, it suffices to show that, if the limits exist, then $$\lim_{i\to\infty}\frac{\mu(S_{1,i})}{\mu(S_{1,i}')}\ne \lim_{i\to\infty} \frac{\mu(S_{2,i})}{\mu(S_{2,i}')}$$
		
		This follows from the direct computation
		
		\begin{align*}
			\lim_{i\to\infty}\frac{\mu(S_{i,j})}{\mu(S_{i,j})'}&=\lim_{i\to\infty}\frac{l_xl_{y,i}\log(l_{x,j}'l_{y,i}')}{l_{x,j}'l_{y,i}'\log(l_{x,j}'l_{y,i}')}\\
			&=\lim_{i\to\infty}\frac{l_xl_{y,i}}{l_{x,j}'l_{y,i}'}\\
			&=\frac{l_x}{l_{x,j}'}\lim_{i\to\infty}\frac{l_{y,i}}{l_{y,i}'}
		\end{align*}
		
		Since $l_{x,1}'\ne l_{x,2}'$, $\lim_{i\to\infty}\frac{\mu(S_{1,i}')}{\mu(S_{1,i})}= \lim_{i\to\infty} \frac{\mu(S_{2,i}')}{\mu(S_{2,i})}$ only if $\lim_{i\to\infty}\frac{l_{y,i}}{l_{y,i}'}=0$. But this is impossible since $f_2$ is $K$ bi-Lipschitz. It follows that if $f_1$ is not affine, $q'$ is not scaling. 
		
	\end{proof}
	
	Note that along the way we have proven that the same result is true for Sol.
	
	\begin{thm} \label{SolMeasureScalingIsAffineThm}
		
		Let $\overline{q}_1:G\to G$ be a quasi-isometry given by $L_g\circ (f_1, f_2, id)\circ \sigma$. If $\overline{q}_1$ is scaling (in the sense of measure) then the $f_i$ are affine.
		
	\end{thm}
	
	We conclude the subsection by showing that the of \ref{SolScalingIsAffineProp} is also true.
	
	\begin{prop}\label{SolAffineIsScalingProp}
		
		Let $q:N\to N$ be a $K$ quasi-isometry, with $q=\rho\circ L_{g}\circ \bigl((f_1, f_2, id)\circ\sigma\bigr)|_N=\rho\circ\overline{q}|_N$. If $f_1(x)=m_1x+b_1$ and $f_2(y)=m_2y+b_2$, then $q$ is $\frac{1}{m_1m_2}$-scaling.
		
	\end{prop}	
	
	\begin{proof}
		
		Let $R\subset N$ be a finite set. We need to show that there are positive constants $C$ and $r$ so that $\bigl||q^{-1}(R)|-\frac{|R|}{m_1m_2}\bigr|\le C|\partial^N_r(R)|$. 
		
		Since $\rho\circ\overline{q} =q$, $q^{-1}(R)= \overline{q}^{-1}(\ext{S})\cap N$. Since $q$ scales the Haar measure by a constant, 
		
		\begin{align*}
			\mu(\overline{q}^{-1}(\ext{S}))&=\frac{\mu(\ext{R})}{m_1m_2}\\
			&=\frac{|R|}{m_1m_2}
		\end{align*}
		
		Then $$\left| |q^{-1}(R)|-\frac{|R|}{m_1m_2}\right|=\left||\overline{q}^{-1}(\ext{R})\cap N|-\frac{\mu(\ext{R})}{m_1m_2}\right|$$
		
		By Lemma \ref{VolumePtCountComparison}, $$\bigl||\overline{q}^{-1}(\ext{R})\cap N|-\mu(\overline{q}^{-1}(\ext{R}))\bigr|\le \mu\bigl(\partial^G_{D_B} (\ext{R})\bigr)$$

		Combining the previous two lines with the fact that $\frac{\mu(\ext{R})}{m_1m_2}=\mu(\overline{q}^{-1}(\ext{R}))$, we observe
		
		$$\left| |q^{-1}(R)|-\frac{|R|}{m_1m_2}\right|\le\mu\bigl(\partial^G_{D_B}(\ext{R})\bigr)$$
		
		Applying Lemma \ref{GBoundarytoNBoundarySpecial}, we conclude 
		
		$$\left| |q^{-1}(R)|-\frac{|R|}{m_1m_2}\right|\le \bigl|\partial^N_{3D_B}(\ext{R}\cap N)\bigr|= \bigl|\partial^N_{3D_B}(R)\bigr|$$
		
		as required. 
	
	\end{proof}

	\subsection{The Rank-One Case of the Main Construction}\label{SolMainConstruction}
	
	Let $\gamma\in \R^+$. We will now start with the net $N$ and construct a space $N_\gamma$ with scaling group $\langle \gamma\rangle$ by attaching a collection of flats (coarsely) along the locus $xy^2=1$ in $N$. We then prove a technical result that any quasi-isometry must coarsely preserve both the $N$ subspace and this locus. The proof that $Sc(N_\gamma)=\langle \gamma \rangle$ will be delayed to the next subsection. Throughout this and the next subsection, we will take $\gamma>1$. Also, since $\gamma$ now refers to a specific number, we will henceforth refer to $k$-scaling when we wish to describe a map as scaling without specifying its value.
	
	\begin{Def}\label{DefinitionOfMainSpace}
		Let $N$ be as before, $\gamma\in \R^+$ be greater than $1$, and let $N_\gamma=\left(N\sqcup \bigsqcup_{i\in \Z} \Z^2\right)/\sim$, where $(0,0)_i\sim \rho(\gamma^{2i},\gamma^{-i},0)$ (denoting $(m,n)_i$ the point $(m,n)$ in the copy of $\Z^2$ indexed by $i$). We will define $a_i=\rho(\gamma^{2i},\gamma^{-i},0)$, and call $A=\{a_i\}$ as the set of \textit{attachment points}. Note that the $a_i$ need not all be distinct. See figure 3.
		
		\begin{figure}[H]
			\centering
			\includegraphics{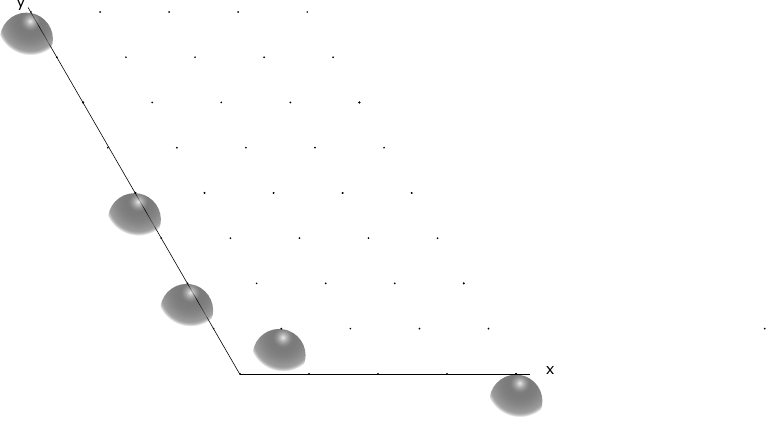}
			\caption{The height-$0$ slice of $N_2$, showing attachment points $-3$ through $1$. Attached planes are visualized as hemispheres.}
		\end{figure}
		
		We give $N_\gamma$ the metric $d_{N_\gamma}$ as follows. We endow $N\subset N_\gamma$ with the metric $d_N$ as before, and give each $\Z^2_i$ its taxicab metric $d_{\Z^2_i}$. Then $d_{N_\gamma}$ is the largest metric extending $d_{\Z^2_i}$ and $d_N$. That is, the distance between points $x\in \Z^2_i$ and $y\in N$ is $d_{N_\gamma}(x,y)=d_{\Z^2_i}(x,(0,0)_i)+d_N(a_i,y)$ and the distance between points $x\in \Z^2_i$ and $y\in\Z^2_j$ is $d_{N_\gamma}(x,y)=d_{\Z^2_i}(x,(0,0)_i)+d_N(a_i, a_j)+d_{\Z^2_j}((0,0)_j,y)$.
	\end{Def}
	
	We will denote $\partial_r$ to refer to boundaries as subsets of $N_\gamma$.
	
	We first make an elementary observation
	
	\begin{lemma} \label{FinitelyManyFlatsPerAttachment}
		Let $i$ be an integer. There are at most $\log_\gamma(2)+1$ integers $j$ so that $a_j=a_i$.
	\end{lemma}
	
	\begin{proof}
		If $\gamma\ge 2$, then $a_i$ has some coordinate that is at least $1$. Multiplying a positive integer by any (positive or negative) power of $\gamma$ and then rounding down will change its value, so that $a_i\ne a_j$ for any $i\ne j$.
		
		Then let $1<\gamma<2$, and suppose $i\le 0$. Then the $y$-coordinate of $a_i$ is a positive integer $n$. Choose the maximal $i$ among the collection of integers with attachment point $a_i$. Then if $a_j=a_i$, we must have $j\le i$, and $\lfloor \gamma^{-i}\rfloor=\lfloor \gamma^{-j}\rfloor$. It follows that $\gamma^{j-i}\le \frac{n+1}{n}$ so that $j-i< \log_\gamma\left(\frac{n+1}{n}\right)$. Clearly then there are at most $\log_\gamma(2)$ such $j$.
		
		The proof for $i>0$ is equivalent, but one uses the $x$-coordinate and takes $i$ minimal among the integers with attachment point $a_i$.
		
	\end{proof}
	
	Let us verify that $N_\gamma$ has a well-defined scaling group.
	
	\begin{lemma} \label{NgammaPropertiesLemma}
		$N_\gamma$ is UDBG and amenable.
	\end{lemma}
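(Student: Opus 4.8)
The plan is to verify the two properties of $N_\gamma$ separately, reducing each to the corresponding property of $N$ (established in Lemmas \ref{NPropertiesLemma} and \ref{NAmenabilityLemma}) together with the elementary fact that the attached copies of $\Z^2$ are themselves UDBG and amenable. The key point is that $N_\gamma$ is built from $N$ by gluing in a family of flats along the sparse set of attachment points $A$, and that the metric is defined so that each flat hangs off of $N$ at a single point $a_i$. First I would treat uniform discreteness and bounded geometry. Uniform discreteness is immediate: within $N$ the minimum distance is inherited, within each $\Z^2_i$ it is $1$, and the gluing metric only increases cross-distances (distances through an attachment point are sums of nonnegative terms), so no two distinct points come closer than the minimum of the two constituent lower bounds. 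For bounded geometry I would fix a radius $r$ and a center, and observe that an $r$-ball can meet $N$ (giving boundedly many points by Lemma \ref{NPropertiesLemma}) and can meet at most boundedly many of the flats $\Z^2_i$, since reaching flat $i$ requires passing through $a_i$, and only boundedly many attachment points $a_i$ lie within $r$ of the center (again by bounded geometry of $N$, since the $a_i$ live in $N$); within each such flat only an $r$-ball's worth of points are reachable, and $r$-balls in $\Z^2$ have cardinality at most $(2r+1)^2$.

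The subtlety worth flagging is that the attachment points $a_i$ need not be distinct, so several flats may be glued at the same net point; this is harmless because ``boundedly many flats within distance $r$'' still holds — the relevant bound is on the number of \emph{indices} $i$ with $d_N(a_i, \text{center}) \le r$, and I would argue this is finite and uniformly bounded. Here the exponential separation of the attachment points is useful: $a_i = \rho_N(\gamma^{2i}, \gamma^{-i}, 0)$ has bounded height, and consecutive attachment points spread out exponentially in the $x$ and $y$ parameters, which translate to linear growth in the Riemannian/net distance (Remark \ref{FlatsInSolRk}); hence only finitely many $a_i$ lie in any ball, with a bound depending only on $r$.

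For amenability I would exhibit a F\o lner sequence in $N_\gamma$ directly from the one in $N$. Take the F\o lner sets $S_i' \subset N$ from Lemma \ref{NAmenabilityLemma} and enlarge each to $\widehat{S_i} \subset N_\gamma$ by adjoining, for every attachment point $a_j \in S_i'$, a large square block $[-L_i, L_i]^2 \subset \Z^2_j$ of the corresponding flat, with $L_i \to \infty$ chosen slowly enough that the total mass added by the flats is negligible compared to $|S_i'|$ while each individual flat-block is itself F\o lner in $\Z^2_j$. The boundary $\partial_r \widehat{S_i}$ then decomposes into the part coming from $\partial^N_r S_i'$ (small relative to $|S_i'|$ by Lemma \ref{NAmenabilityLemma}) and the parts coming from the perimeters of the square blocks (each of size $O(L_i)$ against interior mass $\Theta(L_i^2)$). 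The main obstacle — and the step requiring the most care — is the simultaneous bookkeeping: I must choose the growth rate of $L_i$ so that both error contributions vanish in the ratio $|\partial_r \widehat{S_i}|/|\widehat{S_i}|$, controlling the number of attachment points inside $S_i'$ (which could be as large as $\Theta(|S_i'|)$ in the worst case but is in fact far sparser by the exponential spacing above). A clean way to finish is to pick $L_i$ growing slowly enough that $(\#\{j : a_j \in S_i'\}) \cdot L_i^2 = o(|S_i'|)$, which forces the added flat mass and all flat perimeters to be lower-order, so that the F\o lner ratio for $\widehat{S_i}$ is dominated by that of $S_i'$ and tends to $0$.
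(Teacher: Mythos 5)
Your proof is correct in substance, but for amenability you take a genuinely different and much heavier route than the paper. The paper exploits the fact that amenability only requires exhibiting \emph{one} F\o lner sequence, and the flats supply one for free: it takes larger and larger squares inside a single $\Z^2_i$, moved far from $(0,0)_i$, so that the $r$-neighborhood of each square never leaves the flat and the boundary-to-mass ratio is the usual $O(L)/L^2$ of squares in $\Z^2$. No interaction with $N$, attachment points, or sparsity is needed at all. Your construction instead pads the F\o lner sets $S_i'\subset N$ with blocks in the attached flats, which forces you to control $\#\{j : a_j\in S_i'\}$; your appeal to exponential spacing does work for the specific box F\o lner sets of Lemma \ref{NAmenabilityLemma} (the number of attachment points in such a box is logarithmic in its side lengths, while $|S_i'|$ grows like the volume), and in fact this is essentially an ad hoc version of the paper's later Lemma \ref{BoundaryManyAttachmentPoints}, which bounds $|S\cap A|$ by $|\partial^N_r(S)|$ for arbitrary finite $S$. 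Note also that your blocks with $L_i\to\infty$ are unnecessary: once the attachment points in $S_i'$ are known to be sparse, $S_i'$ itself (i.e.\ $L_i=0$) is already F\o lner in $N_\gamma$, since the flat points entering $\partial_r$ are confined to $r$-balls around those boundedly-many attachment points. What your route buys is a stronger statement that the paper only needs later, in Theorem \ref{SolMainThm}: that F\o lner sequences living in $N$ remain F\o lner in $N_\gamma$. For uniform discreteness and bounded geometry your argument matches the paper's; the one point to make rigorous is your claim that the number of \emph{indices} $i$ with $a_i$ in an $r$-ball is uniformly bounded --- the paper handles exactly this by computing the multiplicity $\lceil\log_\gamma(2)\rceil+1$ of flats sharing a single attachment point (the cluster near $i=0$ when $1<\gamma<2$), which is the content your ``exponential separation'' sketch needs to cover, since far-out attachment points are automatically $\gtrsim |i|\log\gamma$ apart but the near-origin ones genuinely coincide after rounding.
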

	
	\begin{proof}
		
		By the definition of $d_{N_\gamma}$, every distance between two points in $N_\gamma$ contains a summand that is either a $d_{\Z^2_i}$-distance or a $d_{G}|_N$-distance. Since both of these are uniformly discrete metrics, so is $d_{N_\gamma}$.

		For bounded geometry, first note that by Lemma \ref{FinitelyManyFlatsPerAttachment}, each point in $N$ meets at most $\log_\gamma(2)$ planes. 	Since both $N$ and $\Z^2$ have bounded geometry, denote the bound on the size of a $d_N$ r-ball to be $B_{N, r}$ and the bound on the size of a $d_{\Z^2}$ r-ball to be $B_{\Z^2, r}$. Now let $r>0$, and let $p\in N$. $B(p,r)$ contains an $N$-ball of size at most $B_{N, r}$. If all $B_{N, r}$ of those points are attachment points, the $B(p,r)$ meets the attachment points of at most $B_{N,r}\log_\gamma (2)$ planes, and contains at most an $r$-ball in each one of them.  Hence
		$$|B(p,r)|\le B_{N,r}+B_{N,r}\log_\gamma (2)B_{\Z^2, r}$$
		
		Now let $p$ be a point in $\Z^2_i\setminus \{(0,0)_i\}$, and $d_{\Z^2_i}(p, (0,0)_i)=d$. If $d>r$, then $B(p,r)$ is an r-ball in $\Z^2_i$ of size at most $B_{\Z^2, r}$. If $d<r$, then $B(x,r)$ is a union of an $r$-ball in $\Z^2_i$, and an $r-d$-ball about $a_i$. Then applying the previous paragraph to the $r-d$-ball abut $a_i$ gives
		
		\begin{align*}
			|B(p,r)|&\le B_{\Z^2, r}+|B(a_i, r-d)|\\
			&\le B_{\Z^2, r}+ B_{N,{r-d}}+B_{N,{r-d}}\log_\gamma (2)B_{\Z^2, {r-d}}\\
			&\le B_{\Z^2, r}+B_{N,r-1}+B_{N,r-1}\log_\gamma (2)B_{\Z^2, r-1}
		\end{align*}
		
		This proves bounded geometry.	
		
		To see amenability, take any F\o lner sequence $S_i$ of larger and larger squares in $\Z^2_i$ moving further and further from $(0,0)_i$. For any $r>0$, these squares are eventually more than $r$ away from $(0,0)_i$, so that their $r$-boundaries are exactly the $r$-boundaries of squares in $\Z^2$. Since $\frac{|\partial^{\Z^2}_r(S_i)|}{|S_i|}\to 0$, and $\partial_r(S_i)=\partial^{\Z^2}_r(S_i)$ for sufficiently large $i$, we conclude that $\frac{|\partial_r(S_i)|}{|S_i|}\to 0$
		
	\end{proof}
	
	For the remainder of this section, denote by $B_r$ the upper bound on the size of a $r$-ball in $N_\gamma$.
	
	We next state a preliminary result about quasi-isometries on $N_\gamma$.
	
	\begin{prop} \label{FlatPreservingProp}
		Let $q:N_\gamma\to N_\gamma$ be a $K$ quasi-isometry. Then $q(N)$ is at bounded distance from $N$, $q(\Z^2_i)$ is bounded distance from a single $q(\Z^2_j)$, and $q(A)$ is at bounded distance from $A$. All bounds depend only on $K$.
	\end{prop}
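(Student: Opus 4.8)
The plan is to prove both statements using coarse quasi-isometry invariants that separate the two ``types'' of points in $N_\gamma$: the net $N$, whose balls grow exponentially (by the remark following Lemma \ref{NAmenabilityLemma}), and the flats $\Z^2_i$, whose balls grow only quadratically until they reach an attachment point. First I would record two uniform local growth estimates. For every net point $p\in N$ one has $|B(p,r)|\ge c_1 e^{c_2 r}$ for all $r$, uniformly in $p$: this follows from exponential growth of balls in $N$ together with the fact that any center can be moved to height $0$ by an isometric integer $t$-translation, leaving only finitely many ``types'' of center. For every flat point $p\in\Z^2_i$ at depth $d$ (distance $d$ to its foot $a_i$) one has $B_{N_\gamma}(p,r)=B_{\Z^2}(p,r)$ whenever $r<d$, since the only way to leave a flat is through its attachment point; hence $|B(p,r)|\le C r^2$ on the whole window $r\le d$.

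For the first claim I would argue by growth comparison. Fix $p\in N$ and set $p'=q(p)$. Pushing balls forward through the $K$-quasi-isometry $q$, which is $M$-to-one, gives $|B(p',Ks+K)|\ge |B(p,s)|/M\ge (c_1/M)e^{c_2 s}$, so $p'$ has exponential growth $|B(p',r)|\ge C' e^{c' r}$ for all $r\ge r_1$. If $p'$ were a flat point of depth $d'$, the window estimate would force $C'e^{c'r}\le Cr^2$ for all $r_1\le r\le d'$, impossible once $r$ exceeds a threshold $r_2$ depending only on $K$. Thus $d'<r_2$, so $p'$ lies within $r_2$ of its foot $a_i\in N$, giving $q(N)\subset\mathscr{N}_{r_2}(N)$; applying the same argument to the coarse inverse $\widetilde q$ yields the reverse inclusion, so the Hausdorff distance is bounded.

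For the second claim I would characterize a bounded neighborhood of $A$ by a coarse separation property preserved by $q$. Fix a large scale $t$ and say $p$ has property $P$ if $N_\gamma\setminus B(p,t)$ has an unbounded coarse component of subexponential (quadratic) growth. This holds essentially exactly on $\mathscr{N}_t(A)$: for an attachment point $a_i$ the deep part $\{z\in\Z^2_i:\operatorname{depth}(z)>t\}$ is cut off from the rest of $N_\gamma$, the only exit being the removed foot, and is an unbounded quadratic-growth component; the same holds whenever $B(p,t)$ contains some $a_j$, and every such $p$ already lies in $\mathscr{N}_t(A)$. Conversely, removing a ball about a deep flat point leaves $\Z^2$ minus a disk, a connected annulus still joined to the bulk through the foot, while removing a ball about a net point farther than $t$ from all $a_j$ leaves the net with every flat still attached; since Sol, hence $N$, is one-ended, the complement then has a single unbounded component, of exponential growth, so no quadratic unbounded component appears. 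Property $P$ is a quasi-isometry invariant: a $K$-quasi-isometry sends bounded sets to bounded sets, coarsely preserves the components of the complement of a bounded set, preserves unboundedness, and preserves the growth type of a subset (push balls forward and use that $q$ is bounded-to-one); and since $B(p,t)$ separates the witnessing component $U$ from the bulk, pulling paths back through $\widetilde q$ shows every path from $q(U)$ to $q(\text{bulk})$ meets $B(q(p),Kt+K)$, so a ball of radius $t'=Kt+K$ about $q(p)$ again separates an unbounded quadratic component. Hence $P(p)\Rightarrow P(q(p))$ at scale $t'$, giving $q(A)\subset\mathscr{N}_{t'}(A)$, with the reverse inclusion from the same argument for $\widetilde q$.

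I expect the main obstacle to be this second claim. Unlike the growth comparison that handles $N$, the factor-$K$ distortion of a quasi-isometry defeats any naive ``nearest net point'' argument for the attachment points, so one is forced to pin $A$ down by the genuinely coarse property $P$ and to verify its invariance with care. The two geometric inputs that make $P$ work are precisely that $\Z^2$ minus a disk stays connected and that the Sol-net is one-ended; these are exactly what rule out spurious unbounded quadratic components at flat-interior points and at generic net points, respectively.
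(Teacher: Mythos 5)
Your first claim is handled exactly as in the paper: the paper also fixes $x\in N$ with $d(q(x),N)>D$, notes $q(B(x,\frac{D}{K}-K))\subset B(q(x),D)\subset \Z^2_i$, and compares the exponential lower bound $E$ for $N$ against $B_{K^2+K}\cdot Q(D)$ with $Q$ quadratic. For the second claim you genuinely diverge: the paper invokes the quasi-flat classification in $\h^2\times\h^2$ (Eskin--Farb, Kleiner--Leeb, via Remark \ref{FlatsInSolRk}) to show $q(\Z^2_i)$ cannot stay near $N$, then uses one-endedness of $\Z^2$ to conclude each flat lands near a single flat, uniformly; your route replaces this rigidity input with a purely coarse-topological invariant (an unbounded quadratic-growth component separated by a ball), which is more elementary and self-contained. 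Note, though, that the paper's stronger intermediate conclusion --- each $\Z^2_i$ maps within uniformly bounded distance of a single $\Z^2_j$ --- is exactly what gets cited again at the start of Theorem \ref{SolMainThm}, so if your proof were adopted, that flat-to-flat statement would still need to be extracted separately (it does follow from $q(A)$ near $A$ plus your one-endedness reasoning, but it is not in your write-up).

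There is, however, a genuine gap in your characterization of property $P$. You claim that removing $B(p,t)$ about a net point $p$ with $d(p,A)>t$ leaves ``the net with every flat still attached,'' and that one-endedness of $N$ then yields a single unbounded (exponential) component. One-endedness only controls the number of \emph{unbounded} components of $N\setminus B(p,t)$; it says nothing about \emph{bounded} components (``pockets''). If some attachment point $a_j$ lies in a bounded component $W$ of $N\setminus B(p,t)$, then $W$ together with $\Z^2_j$ is an unbounded component of $N_\gamma\setminus B(p,t)$ of quadratic growth, so $P(p)$ holds even though $d(p,a_j)$ is controlled only by $t$ plus the diameter of $W$ --- which you have not bounded. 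To repair this you need a shallow-pockets lemma for Sol: every bounded component of the complement of $B(p,r)$ lies in $B(p,Cr+C)$. This is true and provable from the horospherical product picture of Remark \ref{FlatsInSolRk} (if $d(x,p)$ is large, one of the two factor-$\h^2$ distances exceeds $r$, and moving horizontally at fixed height in the other parameter keeps that factor distance constant while escaping to infinity), but it is a necessary additional ingredient, not a consequence of one-endedness. A second, more routine issue is scale bookkeeping in the quasi-isometry invariance of $P$: pulling separation through $q$ and $\widetilde q$ degrades the radius by a factor of $K$ in each direction, so a single fixed scale $t$ does not transport; you should either quantify $P$ over all sufficiently large $t$ or exploit the fact that at an actual attachment point $a_i$ the ball $B(a_i,t)$ separates the deep flat at \emph{every} scale $t$, which makes the forward implication run cleanly for the points you actually need.
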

	
	The proof will rely on the following theorem, due independently to Kleiner-Leeb and to Eskin-Farb. The phrasing below is the one appearing in \cite{KleinerLeeb}, and is comparable to the statement of \cite{EskinFarb} Theorem 1.1.
	
	\begin{thm}\label{SolQuasiFlatsThm} [\cite{KleinerLeeb} 1.2.5]
		Let $\Phi:\R^2\to\h^2\times\h^2$ be a $K$ quasi-isometry. Then there is a number $D(K)$ so that $\Phi(\R^2)$ lies in the $D$-neighborhood of a union of at most $D$ maximal flats.	
	\end{thm}
	
	\begin{cor} \label{SolQuasiFlatsCor}
		There is no quasi-isometric image of $\R^2$ in Sol, and hence no quasi-isometric image of $\Z^2$ in $N$.
	\end{cor}
	
	\begin{figure}[h]\label{SolFlats}
		
		\centering
		\includegraphics[scale=0.9]{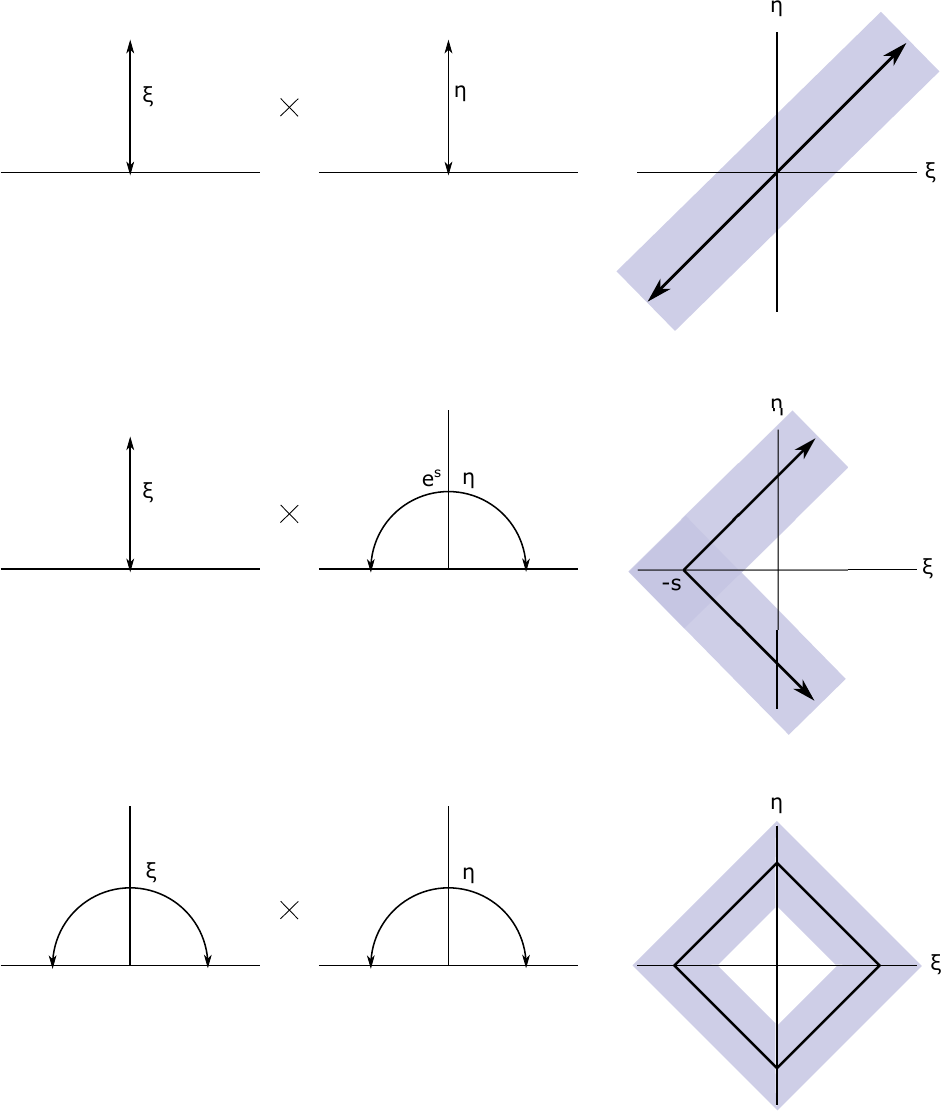}
		\caption{$\xi$ and $\eta$ are geodesics in the upper half plane model of $\h^2$. On the left are the cases where both are vertical (top), one is vertical (middle), or both are semicricles (bottom). Vertical geodesics are parameterized by height. On the right are the maximal flats $\xi\times\eta\subset\h^2\times\h^2$. The dark segments and rays are $(\xi\times\eta)\cap\Psi(G)$, and the shaded region is the intersection of $\mathscr{N}_D(\Psi(G))$ with this plane.}
		
	\end{figure}

	\begin{proof}
		Since $\R^2$ is quasi-isometric to $\Z^2$ and $N$ is quasi-isometric to Sol, we will instead prove that there is no quasi-isometric embedding of $\Z^2$ into Sol.
		
		By Remark \ref{FlatsInSolRk}, Sol quasi-isometrically embeds in $\h^2\times\h^2$. Let $\Psi$ be this quasi-isometric embedding, and recall that $\Psi(G)=\{((x,t),(y,s)):s+t=0\}$ where $t$ and $s$ are the logarithms of the heights in the upper half plane model. Composing with $\Psi$ pushes forward any quasi-flat in Sol to one in $\Psi(G)\subset \h^2\times\h^2$. Since $\Z^2$ and $\R^2$ are quasi-isometric, Theorem \ref{SolQuasiFlatsThm} applies to quasi-isometric images of $\Z^2$ in $\h^2\times\h^2$ as well.
		
		%Pictures highly needed here
		
		The maximal flats in $\h^2\times\h^2$ are products of geodesics in each coordinate. The intersections of these maximal flats with $\Psi(G)$ are shown in Figure 4, as well as the intersection $\mathscr{N}_D(\Psi(G))$ with a maximal flat. Moving a quasi-isometric image of $\Z^2$ in Sol by a distance at most $D$ would yield a copy of $\Z^2$ in a union of at most $D$ of these shapes. But the area growth of the shaded regions is at most linear (the flats $\xi\times\eta$ are isometrically embedded in $\h^2\times\h^2$), and therefore so is the area growth of a union of finitely many of them. Since $\Z^2$ grows quadratically, this is a contradiction. \end{proof}
	
	We are now prepared to prove Proposition \ref{FlatPreservingProp}.
	
	\begin{proof}
		
		$N$ has exponential growth due to the growth type of Sol, and $\Z^2$ has quadratic growth. Denote $E$ an exponential lower bound for the growth of $N$ and $Q$ a quadratic upper bound on the growth of $\Z^2$. $q$ is at most $B_{K^2+K}$-to-1 on any point. If a point $x\in N$ has $d(q(x),N)>D$, let $q(x)\in \Z^2_i$, and consider $B(q(x),D)\subset \Z^2_i$. By the quasi-isometric inequality, $q(B(x,\frac{D}{K}-K))\subset B(q(x),D)\subset\Z^2_i$ so that $E(\frac{D}{K}-K)\le B_{K^2+K}Q(D)$. This cannot be true for arbitrarily large $D$ because of the growth types on the left and right. Therefore, $q(N)$ is within bounded distance of $N$, with bound depending only on $K$.
		
		We next prove that $q(\Z^2_i)$ lies at bounded distance from exactly one $\Z^2_j$. By Corollary \ref{SolQuasiFlatsCor}, $q(\Z^2_i)$ cannot be within bounded distance of $N$. So there is a point of $\Z^2_i$ mapped at least $\frac{5}{2}K^3+K^2+K$ away from $N$, thus in some $\Z^2_j$. Then a ball of radius at least $\frac{5}{2}K^2+1$ is mapped into $\Z^2_j$ by the quasi-isometric inequality. Thus if $q^{-1}(\Z^2_j)\cap\Z^2_i$ is bounded, it contains a pair of points at distance at least $2\lfloor\frac{5}{2}K^2+1\rfloor> 5K^2$, that are adjacent to points mapped off of $\Z^2_j$. Call these points $(m_1,n_1)_i$ and $(m_2,n_2)_i$ and the adjacent points mapped off of $\Z^2_j$ $(m_1',n_1')_i$ and $(m_2',n_2')_i$.
		
		By the quasi-isometric inequality, $(m_1',n_1')_i$ and $(m_2',n_2')_i$ are mapped at most $2K$ away from the images of $(m_1,n_1)_i$ and $(m_2,n_2)_i$, which are themselves in $\Z^2_j$. Then the images of $(m_1,n_1)_i$ and $(m_2,n_2)_i$ lie within $B((0,0)_j,2K)$ since they lie within $2K$ of points in $N$. But $(a_1,b_1)_i$ and $(a_2,b_2)_i$ are at distance at least $5K^2$ apart, and thus must be mapped at least $4K$ apart, which is a contradiction. 
		
		It follows that an unbounded subset of $\Z^2_i$ is mapped into $\Z^2_j$. Let $q(m_1, n_1)_i\in\Z^2_j$, and connect $(m_1,n_1)_i$ to $(0,0)_i=a_i$ by a geodesic, say $\eta$. Then adjacent points of $\eta$ are distance $1$ apart, so that sequential points of $q(\eta)$ are distance at most $2K$ apart. Then consider the finite set $q^{-1}(B(a_j, K))\cap \Z^2_i$. Since the only way to get from $N\setminus A$ to $\Z^2_j$ is through $a_j$, $q(\eta)$ meets $B(a_j, K)$, so that $q^{-1}(B(a_j, K))\cap \Z^2_i$ is not empty.
		
		Because $\Z^2_i$ is 1-ended, the complement of any finite set consists of one unbounded component and finitely many bounded components. In the unbounded component, points can be connected by paths that miss $q^{-1}(B(a_j, K))\cap \Z^2_i$, and no such path crosses from $\Z^2_j$ to $N_\gamma\setminus\Z^2_j$ by the above argument. Since an unbounded subset of $\Z^2_i$ is mapped into $\Z^2_j$, we conclude the entire unbounded component of $\Z^2_i\setminus q^{-1}(B(a_j, K))$ is mapped into $\Z^2_j$. Therefore, $\Z^2_i$ is mapped within $D(i)$ to $\Z^2_j$, where $D(i)$ is the largest distance any of the points in the bounded components of $\Z^2_i\setminus q^{-1}(B(a_j, K))$ is mapped away from $\Z^2_j$.
		
		The second part of the proposition is proven if $D(i)$ is uniformly bounded. However, we have already shown that if any point in $\Z^2_i$ is sent more than $\frac{5}{2}K^3+K^2+K$ from $a_j$, then there would have to be an unbounded component of $\Z^2_i$ mapped into $N$. As before this would be a contradiction. So $D(i)\le \frac{5}{2}K^3+K^2+K$.
		
		For the third part of the proposition, note that attachment points $a_i$ are points that are on both $N$ and $\Z^2_i$. They are therefore mapped within bounded distance of both $\Z^2_j$ and of $N$. But the intersection of a neighborhood about $N$ with a neighborhood about $\Z^2_j$ is a ball about $a_j$, so that $a_i$ is mapped within bounded distance of $a_j$.
		
	\end{proof}
	
	The following corollary is immediate
	
	\begin{cor}\label{BijectivityOnFlats}
		Let $q:N_\gamma\to N_\gamma$ be a $K$ quasi-isometry. If $q(\Z^2_i)$ is within finite distance of $\Z^2_j$, denote $q_*(i)=j$. Then $q_*$ is a bijection.		
	\end{cor}
	
	\begin{proof}
		By one-endedness, $q_*(i)=j$ if and only if $q(\Z^2_i)\cap \Z^2_j$ is unbounded. If then $j_0$ is not in the image of $q_*$, $q(N_\gamma)\cap \Z^2_{j_0}$ is a union of uniformly bounded neighborhoods of $a_j$. Then points arbitrarily far from $a_j$ would be arbitrarily far from $q(N_\gamma)$ so that $q$ is not a quasi-isometry.
		
		This shows surjectivity. Injectivity follows from considering the coarse inverse of $q$.
	\end{proof}
	
	We conclude this section with an observation that will be necessary later.
	
	\begin{lemma} \label{BoundaryManyAttachmentPoints}
		Let $R\subset N$ be any finite set. Then $|R\cap A|\le |\partial^N_r(R)|$.
	\end{lemma}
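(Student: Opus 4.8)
The key feature to exploit is that every attachment point lies in the single height-$0$ slice, and that, since $\gamma>1$, we have $\lfloor\gamma^{-i}\rfloor=0$ for $i>0$ and $\lfloor\gamma^{2i}\rfloor=0$ for $i<0$. Thus (after the rounding by $\rho_N$) the point $a_i$ lies on the positive $x$-axis, of the form $(p,0,0)$, for $i>0$; on the positive $y$-axis, of the form $(0,q,0)$, for $i<0$; and $a_0=(1,1,0)$. The plan is to define an essentially injective map from $S\cap A$ into $\partial^N_r(S)$, for a fixed $r=O(D_B)$, by sending each attachment point to the first net point at which a suitable vertical ray through it leaves the finite set $S$.

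Concretely, for a point $(p,0,0)$ of $S\cap A$ on the $x$-axis I would follow the \emph{downward} ray, i.e.\ the net points $\rho_N(p,0,-k)$ for $k=1,2,\dots$, and charge it to the first such point lying in $N\setminus S$; for a point $(0,q,0)$ on the $y$-axis I would instead follow the \emph{upward} ray $\rho_N(0,q,k)$. Each resulting exit point has its ray-predecessor in $S$ at distance $O(D_B)$ — consecutive points of a vertical ray are a bounded distance apart, as the $t$-direction is undistorted — so it indeed lies in $\partial^N_r(S)$ for a fixed $r$, and every ray eventually leaves the bounded set $S$. Injectivity is exactly where the direction matters: at height $-k$ the box containing $(p,0,-k)$ has $x$-index $\lfloor p\,e^{k}\rfloor$, so distinct integers $p$ occupy distinct boxes (hence give distinct net points) at every common height; that is, going down separates the $x$-coordinate. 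Symmetrically, going up separates the $y$-coordinate through the index $\lfloor q\,e^{k}\rfloor$. Since $x$-axis points carry distinct $x$-coordinates and are sent downward, while $y$-axis points carry distinct $y$-coordinates and are sent upward, each group maps injectively, and the two images occupy heights of opposite sign, so they cannot collide with one another.

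The step I expect to be the crux is precisely this choice of direction. The naive rule ``always exit upward'' fails: going up collapses the $x$-coordinate and funnels the entire $x$-axis tail of $A$ — arbitrarily many attachment points sitting in a tall region of $S$ — into a single exit box, destroying injectivity. The anisotropy of Sol is what repairs this, by routing each axis in the direction that keeps its distinguishing coordinate spread out. The only remaining imprecision is that finitely many ``central'' attachment points can share a rounded coordinate value; by the count already established in Lemma \ref{NgammaPropertiesLemma}, at most a constant $C_\gamma$ of them coincide (with $C_\gamma=1$ once $\gamma\ge 2$), so the assignment is at worst $C_\gamma$-to-one and we obtain $|S\cap A|\le C_\gamma\,|\partial^N_r(S)|$, which is the assertion (with $C_\gamma$ absorbed into the boundary constant, per the conventions fixed in the previous section).
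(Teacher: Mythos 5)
Your argument is correct, but it takes a genuinely different route from the paper's. The paper stays inside the measure-comparison framework it has already built: it forms the union $T$ of the special boxes through points of $S$, projects to the $xy$-plane, and observes that the projected area $\alpha$ simultaneously bounds the number of height-$0$ points of $S$ (hence $|S\cap A|$) from above and $\mu(\partial^G_1(T))$ from below, using that the vertical direction is undistorted; the conversions of Lemma \ref{NAmenabilityLemma} then turn this measure into a count of net boundary points, with constant exactly $1$. You instead construct an explicit (near-)injection $S\cap A\to\partial^N_r(S)$ by charging each attachment point to the exit point of a vertical ray, with the direction chosen per axis so that the anisotropic box structure keeps the distinguishing coordinate spread out ($x$-index $\lfloor pe^k\rfloor$ going down, $y$-index $\lfloor qe^k\rfloor$ going up). This is purely combinatorial, bypasses the Haar-measure lemmas entirely, and makes explicit the role of Sol's anisotropy, which the projection argument leaves implicit; your observation that the naive ``always exit upward'' rule fails is exactly right. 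Two bookkeeping remarks. First, your stated worry about central attachment points is misdirected: $A$ is a set, so indices whose images coincide contribute a single point to $|S\cap A|$, and distinct axis points automatically carry distinct integer coordinates, so no constant arises there. Second, a genuine (but easily absorbed) collision does lurk at $a_0=(1,1,0)$, which your two rules do not cover: its downward ray occupies the same boxes as that of $(1,0,0)$ (at height $-k$ both have $x$-index $\lfloor e^k\rfloor$ and $y$-index $0$), and symmetrically its upward ray shadows $(0,1,0)$, so for small $\gamma$ your assignment can be $2$-to-$1$ at one point. This costs only a single additive or multiplicative constant, harmless in every application of the lemma and under the paper's stated convention on constants $C$ and radii $r$ — though note that the paper's proof does deliver the literal statement with constant $1$, which your version, as written, does not.
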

	
	\begin{proof}
		
		Consider $\ext{R}=T$, and project onto the $xy$ plane to obtain a set $T_{xy}$ of area $\alpha$. $\alpha$ is a lower bound on the horizontal surface area of $T$, and an upper bound for the number of points in $R$ at height $0$.
		
		Then $\mu(\partial^G_1(T))\ge \alpha$ since the vertical direction is undistorted in $G$. By Remark \ref{ExtAndInt}, \newline $|\partial^G_{1+D_B}(R)\cap N|\ge \mu(\partial^G_1(T))$. But as $T=\ext{R}$, a point $p$ is in $T$ iff $\rho(p)\in T$. Rounding then shows that $\partial^G_{1+D_B}(T)\cap N\subset\partial^N_{1+2D_B}(R)$, providing the required upper bound on the number of points in $R$ at height $0$.
		
	\end{proof}

	\subsection{Proof of the Main Theorem in the Rank 1 Case}
	
	The purpose of this section is to prove the 1-generator case of Theorem \ref{MainThm}.
	
	\begin{thm} \label{SolMainThm}
		Let $N_\gamma$ be the space described in the previous subsection. Then the scaling group of $N_\gamma$ is $\langle \gamma\rangle$.
	\end{thm}
	
	Let $q:N_\gamma\to N_\gamma$ be scaling. We know already that $q$ restricts, up to bounded distance, to a map from $q|_N$ which induces a map $\overline{q|_N}:G\to G$. Denote the companion map $\overline{q_1|_N}=L_g\circ(f_1,f_2,id)\circ\sigma:G\to G$, and $q_1|_N=\rho\circ \overline{q_1}$, with $g=x^{s_1}y^{s_2}t^{s_3}$. Then we will consider the map $q_1:N_\gamma\to N_\gamma$ where \newline $q_1(p)=\begin{cases} q_1|_N(p) & p\in N \\ q(p) & p\not\in N\end{cases}$. Notice that $q_1\approx q$ so that it suffices to compute the scaling value of $q_1$. We will also write $\overline{q_1|_N}=L_{x^{s_1}y^{s_2}}\circ R_{t^{s^3}}\circ (f_1', f_2', id)\circ \sigma$ for new maps $f_i'$ that are affine if and only if the $f_i$ are, as in the previous section. We will assume $q$, and $q_1$ are $K$ quasi-isometric and $f_i, f_i'$ are $K$ bi-Lipschitz. This notation is fixed for the entire subsection. The proof will proceed as follows.
	
	First, we will show that $q_1|_N$ is scaling in order to invoke Proposition \ref{SolScalingIsAffineProp}. We will then observe that the $x$ and $y$ coordinates of the attachment points are unequally spaced, which will show that $\sigma$ is trivial. We will deduce that the linear terms of the maps $f_1'$ and $f_2'$ must be powers of $\gamma$. To conclude, we will use Proposition \ref{CalculatingScalingValues} to show that the scaling value of $q_1$, if it exists, must be a power of $\gamma$. This will show that $Sc(N_\gamma)\subset \langle \gamma\rangle$. We will then explicitly show that scaling value $\gamma$ is attained.
	
	\begin{prop} \label{ScalingRestrictsToNet}
		Let $q:N_\gamma\to N_\gamma$ and $q_1|_N:N\to N$ be as described above. Then if $q$ is $k$-scaling on $N_\gamma$, $q_1|_N$ is $k$-scaling on $N$.
	\end{prop}
	
	\begin{proof}
		Let $S\subset N$ be a finite subset. By assumption there are positive real numbers $r$ and $C$ so that
		
		$$\bigl||q^{-1}(S)|-k|S|\bigr|\le C |\partial_r(S)|$$
		
		We want the inequality
		
		$$\bigl||q_1|_N^{-1}(S)|-k|S|\bigr|\le C |\partial^N_r(S)|$$
		
		where $C$ and $r$ may change depending only on $K$ and $\gamma$. Since $q_1|_N\approx q|_N$, we will verify such an inequality instead for $q|_N$. In order for $q|_N$ to be $k$-scaling, we need the inequality
		
		$$\bigl||q^{-1}(S)\cap N|-k|S|\bigr|\le C |\partial^N_r(S)|$$
		
		Where again $C$ and $r$ may change depending on the previous choices of $C$, $r$, and on $K$ and $\gamma$. \newline $q^{-1}(S)\cap N=q|_N^{-1}(S)$ differs from $q^{-1}(S)$ because it is missing all the points on flats that are mapped to attachment points in $S$. By Lemmas \ref{FinitelyManyFlatsPerAttachment} and \ref{BoundaryManyAttachmentPoints}, there are at most $|\partial^N_r(S)|$ attachment points in $S$, each one attached to at most $ \log_\gamma(2)+1$ flats $\Z^2_i$. Since $q$ is a $K$ quasi-isometry, it is at most $Q(K^2+K)$-to-1 on the flats where $Q$ is the quadratic growth function of $\Z^2$. Since $q$ is injective on the collection of flats, it follows that $$\bigl| q^{-1}(S)-q|_{N}^{-1}(S)\bigr|\le C|\partial^N_r(S)|$$
		
		Similarly, $\partial_r(S)$ contains both $\partial^N_r(S)$ as well as balls about attachment points that are near $S$ but not near $N\setminus S$, which must therefore be in $S$. By Lemma \ref{BoundaryManyAttachmentPoints} we have at most $|\partial^N_r(S)|$ such attachment points, and $\partial_r(S)$ includes at most a ball of radius $r$ about each of them. By bounded geometry, we conclude that  
		
		$$\bigl||\partial^N_r(S)|-|\partial_r(S)|\bigr|\le C| \partial^N_r(S)|$$
		
		We therefore obtain the desired inequality
		
		$$\bigl||q|_N^{-1}(S)\cap N|-k|S|\bigr|\le C'|\partial^N_r(S)|$$
	\end{proof}
	
	As a result, Proposition \ref{SolScalingIsAffineProp} applies to $q_1$. We next show that $\sigma$ must be trivial.
	
	\begin{lemma} \label{SigmaIsTrivial}
		Let $\overline{q_1|_N}=L_g\circ (f_1,f_2,id)\circ\sigma$ as before, with the $f_i$ affine. Then $\sigma$ is trivial.
	\end{lemma}
	
	\begin{proof}
		
		We will prefer to consider  $\overline{q_1|_N}=L_{x^{s_1}y^{s_2}}\circ R_{t^{s^3}}\circ (f_1', f_2', id)\circ \sigma$, where $f_i'$ are again affine.
		
		By Proposition \ref{FlatPreservingProp}, no attachment point is sent more than some global bound $D_1=D_1(K)$ away from the attachment locus $A$. Since the vertical direction is undistorted, $D_1$ gives an upper bound on $|s_3|$.
		
		As described Proposition \ref{SolScalingIsAffineProp} $L_{x^{s_1}y^{s_2}}$ acts as a translation at any fixed height, in particular at height $s_3$. More precisely $x^{s_1}y^{s_2}(a, b, s_3)=(a+e^{s_3}s_1, b+e^{-s_3}s_2, s_3)$. 
		
		Denote by $x(p)$ the x-coordinate of $p$, and denote by $D_2$ the maximum number so that there are points $p_1\in G$ at height $s_3$ and $p_2$ at height $0$ with $d(p_1,p_2)<D_1$ $|x(p_1)-x(p_2)|<D_2$. This maximum exists since, as mentioned before, distances in Sol increase monotonically with respect to coordinate differences. By the above, for each integer $i$ there is an integer $j$ so that $|x(\overline{q_1|_N}(a_i))-x(a_j)|<D_2$, and therefore $|x\bigl( (f_1',f_2',id)\circ \sigma (a_i)\bigr)-x(a_j)|\le D_2+e^{s_3}s_1=D_3$.
		
		Since $f_1'$ is affine, there is a number $I$ is large enough that $\frac{f_1'(\lfloor\gamma^{2i+1}\rfloor)}{f_1'(\lfloor \gamma^{2i}\rfloor)}\in [.99\gamma, 1.01\gamma]$ for $i>I$. Also let $J$ be large enough that $|\gamma^{2j+2}-\gamma^{2j+1}|>|\gamma^{2j+1}-\gamma^{2j}|> .01\gamma^{2j+1}
		+ (1.01\gamma+2)(D_3)+1$ for $j>J$.
		
		Let $i>I$ be sufficiently large that $|f_1'(\lfloor \gamma^i\rfloor)-\lfloor\gamma^{2j}\rfloor|<D_3$ for some $j>J$. By assumption on $j$, the choice of such $j$ is unique. Now if $\sigma$ is nontrivial, then $(f_1', f_2', id)\circ \sigma(a_{-i})$ is within $D_3$ of $a_j$ and no other attachment point.
		
		Consider then the $x$-coordinate of $(f_1', f_2', id)\circ \sigma(a_{-i-1})$. By assumption, on $i$ and $j$
		
		\begin{align*}
			x((f_1', f_2', id)\circ \sigma(a_{-i-1})) &=f_1'(\lfloor \gamma^{i+1}\rfloor)\\
			&\in [.99\gamma f_1'(\lfloor \gamma^i\rfloor), 1.01\gamma f_1'(\lfloor \gamma^i\rfloor)]\\
			&\subset [.99\gamma \bigl(\lfloor\gamma^{2j}\rfloor-D_3\bigr), 1.01\gamma\bigl(\lfloor\gamma^{2j}\rfloor+D_3\bigr)]\\
			&\subset[.99\gamma^{2j+1}-.99\gamma(D_3+1), 1.01\gamma^{2j+1}+1.01\gamma(D_3)]
		\end{align*}
		
		By assumption, this interval is more than $D_3$ from the $x$-coordinate of any attachment point $\lfloor\gamma^{2j'}\rfloor$. This is a contradiction.	
			
		\end{proof}
		
		We are now prepared to prove that the linear terms of the $f_i'$ are powers of $\gamma$.
		
		\begin{prop} \label{PowersOfGamma}
			
			Let $\overline{q_1|_N}=L_{x^{s_1}y^{s_2}}\circ R_{t^{s^3}}\circ (f_1', f_2', id)$ as before, and $f_1'(x)=m_1x+c_1$ and $f_2'(y)=m_2 y + c_2$. Then $m_1=\gamma^{2n_1}$ and $m_2=\gamma^{n_2}$ for integers $n_i$.
			
		\end{prop}
		
		\begin{proof}
			
			Let $D_1$, $D_2$, and $D_3$ be as in Lemma \ref{SigmaIsTrivial}. Then we have for all $i$ there is a $j_i$ with $|x(\overline{q_1}(a_i))-x(a_{j_i})|<D_3$ by Lemma \ref{SigmaIsTrivial}. As $i\to\infty$, $\frac{f_1'(x(a_i))}{x(a_i)}=\frac{f_1'(x(a_i))}{\lfloor \gamma^{2i} \rfloor}\to m_1$. Since $f_1'(x(a_i))$ is in a bounded interval around $\gamma^{2{j_i}}$, as we take $i\to \infty$ this converges to a ratio of even powers of $\gamma$.
			
			The argument for $m_2$ and $y$ is the same, except that $\frac{f_2'(y(a_i))}{y(a_{j_i})}\to m_2$ as $i\to-\infty$, and the same calculation reveals it to be a ratio of powers of $\gamma$.	
			
		\end{proof}
		
		\begin{cor}\label{ScalingIsPowerOfGamma}
			
			Let $\overline{q_1|_N}=L_{x^{s_1}y^{s_2}}\circ R_{t^{s^3}}\circ (f_1', f_2', id)$, where $f_1'(x)=\gamma^{2n_1}x+c_1$ and $f_2'(y)=\gamma^{n_2}y+c_2$. Then there is a F\o lner sequence in $N_\gamma$ on which $q_1$ is $\gamma^{-2n_1-n_2}$-scaling.
			
		\end{cor}
		
		\begin{proof}
			
			By Proposition \ref{ScalingRestrictsToNet}, we need only compute a scaling value for $q_1|_N$, and by Lemma \ref{SolScalingisNetScaling}, this will equate with the scaling value of $\overline{q_1|_N}=L_{x^{s_1}y^{s_2}}\circ R_{t^{s^3}}\circ (f_1', f_2', id)\circ \sigma$. That this scaling value is $\gamma^{-2n_1-n_2}$ is a direct application of Proposition \ref{CalculatingScalingValues}.
			
		\end{proof}
		
		We now complete the proof of Theorem \ref{SolMainThm}.
		
		\begin{proof}
			
			By Corollary \ref{ScalingIsPowerOfGamma}, $Sc(N_\gamma)\subset \langle \gamma\rangle$. To show the reverse inclusion, it suffices to construct an explicit quasi-isometry for which the scaling value is $\gamma$. To this end, let 
			
			$$q(p)=\begin{cases} \rho(\frac{x}{\gamma^2},\gamma y, t) & p=(x,y,t)\in N\\
				(\lfloor\frac{a}{\gamma}\rfloor, b)_{i-1} & p=(a,b)_i, (a,b)\ne (0,0)
			\end{cases}$$
			
			We first verify that $q$ is indeed a quasi-isometry. Coarse density is immediate from the definition. If $p_1$ and $p_2$ are on $\Z^2_i\setminus N$, then since $\Z^2_i$ is given its taxicab metric, $\frac{1}{\gamma} d(p_1, p_2)-1\le d(q(p_1), q(p_2))\le d(p_1,p_2)$. Note that it is also the case that $\frac{1}{\gamma} d((0,0)_i, p_1)-1\le d((0,0)_{i-1}, q(p_1))\le d((0,0)_i,p_1)$. 	If $p_1$ and $p_2$ are instead in $N$, then $\frac{1}{K}d(p_1,p_2)-K\le d(q(p_1,p_2))\le Kd(p_1,p_2)+K$ for some $K$, since it is a rounding of a quasi-isometry of Sol.
			
			For the remainder of the quasi-isometric inequalities, we will need to bound how far $q(a_i)$ is from $a_{i-1}$. $a_i=(\lfloor\gamma^{2i}\rfloor,\lfloor\gamma^i\rfloor, 0)$, so that $q(a_i)=(\lfloor \frac{\lfloor \gamma^{2i}\rfloor}{\gamma^2}\rfloor, \lfloor \gamma \lfloor \gamma^{-i}\rfloor\rfloor, 0)$. One then checks immediately that \newline $|x(q(a_i))-x(a_{i-1})|\le \frac{1}{\gamma^2}+1$, while $|y(q(a_i))-y(a_{i-1})|\le \gamma+1$, so that $$d(q(a_i),a_{i-1})\le d(id, (\frac{1}{\gamma^2}+1, \gamma+1,0))=D$$
			
			The rest of the quasi-isometric inequality is as follows. If $p_1\in \Z^2_i\setminus N$ and $p_2\in \Z^2_j\setminus N$, then $$d(p_1,p_2)=d(p_1,(0,0)_i)+d(a_i, a_j)+d(p_2,(0,0)_j)$$ while
			$$d(q(p_1),q(p_2))=d(q(p_1),(0,0)_{i-1})+d(a_{i-1},a_{j-1})+d((0,0)_{j-1}, q(p_2))$$ By adding or subtracting at most $2D$, we see that
			
			$$|d(q(p_1),q(p_2))-d(q(p_1),(0,0)_{i-1})+d(q(a_i), q(a_j))+d((0,0)_{j-1}, q(p_2))|\le 2D$$
			
			We then apply the quasi-isometric inequalities we have on $N$ and $\Z^2_i$ to the above. If instead $p_1\in \Z^2_i\setminus N$ and $p_2\in N$, then the same argument works, replacing $d(a_i,a_j)+d(p_2, (0,0)_j)$ with $d(a_i,p_2)$, and $d(a_{i-1},a_{j-1})+d((0,0)_{j-1},q(p_2))$ with $d(a_{i-1},q(p_2))$.
			
			So $q$ is a  quasi-isometry, and we wish to prove its scaling value is $\gamma$. Let $R$ be a finite set, and consider $q^{-1}(R)$. By Lemma \ref{ScalingValuesofAffineQIs}, we know already that
			
			$$\bigl| |q^{-1}(R\cap N)\cap N| -\gamma |R\cap N|\bigr|\le C\big|\partial_r(R)\big|$$
			
			Since net points are only ever sent to net points by $q$, this implies that 
			
			$$\bigl| |q^{-1}(R)\cap N| -\gamma |R\cap N|\bigr|\le C\big|\partial_r(R)\big|$$
			
			We now wish to compute $|q^{-1}(R)\setminus N)|$. Call $R_i=R\cap \Z^2_i$. Decompose $R_i$ into strips $R_i=\bigsqcup_j [a_{1,j}, a_{2,j}]\times \{b_j\}\subset \Z^2_i$. Denote the $j^{th}$ strip $S_j$. Choose this collection maximally, i.e. so that  $[a_{1,j}-1,a_{2,j}]\times\{b_j\}\not\subset R_i$ and $[a_{1,j},a_{2,j}+1]\times \{b\}\not\subset R_i$. Then each $(a_{1,j}-1,b_j)_i$ is in $\partial_1(R_i)$.  If $(0,0)_i\not\in S_j$, then the pre-image is $q^{-1}(S_j)\setminus N=q^{-1}(S_j)=[\lceil\gamma a_{1,j}\rceil, \lceil \gamma a_{2,j} + \gamma\rceil -1]\times\{b\}\subset \Z^2_{i+1}$ by an elementary computation. If $(0,0)_i$ is in $S_j$, then its pre-image is almost the same, but the origin must be deleted because it is an element of $N$. There are $a_{2,j}-a_{1,j}+1$ elements in the $j^{th}$ strip, and $\lceil \gamma a_{2,j}+\gamma\rceil-\lceil \gamma a_{1,j}\rceil$ elements in its pre-image, or one feder if the origin is in $S_j$. As a result, if the origin is not in $S_j$, then $\bigl||q^{-1}(S_j)-\gamma |S_j|\bigr|\le 1$, or if $S_j$ does contain the origin, this difference is at most $2$. Since there are as many points $(a_{1,j}-1,b_j)_i$ as there are total strips, and at least one additional one $(a_{2,j}+1, b_j)_i$ for some $j$, we conclude that
			
			$$\bigl|q^{-1}(R_i)\setminus N|-\gamma |R_i|\bigr|\le\partial^{\Z^2_i}_1(R_i)$$
			
			Where as before $\partial_r^{\Z^2_i}(S)=\mathscr{N}_r(S\cap \Z^2_i)\cap\mathscr{N}_r(\Z^2_i\setminus S)\cap \Z^2_i$. We do this simultaneously on each $\Z^2_i$ meeting $R$ so that 
			
			$$\bigl||q^{-1}(R)\setminus N|-\gamma |R\cap\bigcup_i\Z^2_i|\bigr|\le\sum_i\partial^{\Z^2_i}_1(R_i)\le\partial_1(R)$$
			
			We have now counted the entire pre-image of $R$, but we have overcounted the points of $R$ by counting both $a_i$ in the inequality
			
			$$\bigl| q^{-1}(R)\cap N -\gamma |R\cap N|\bigr|\le C\big|\partial_r(R)\big|$$
			
			and $(0,0)_i$ in the inequality
			
			$$\bigl||q^{-1}(R)\setminus N|-\gamma |R\cap\bigcup_i\Z^2_i|\bigr|\le\partial_1(R)$$
			
			Then we combine these inequalities to obtain
			
			$$ \bigl| |q^{-1}(R)|-\gamma |R|\bigr|\le C\partial_r(R)+|A\cap R|$$
			
			The result then holds by applying Lemma \ref{BoundaryManyAttachmentPoints}.
						
		\end{proof}
		
		We record an observation based on the previous argument that maps of the given form are, up to changing the action on the $\Z^2$ subspaces, the only scaling quasi-isometries. More precisely:
		
		\begin{cor}
			Suppose $q|N\approx \rho\circ L_{x^{s_1}y^{s_2}}\circ R_{t^{s_3}}\circ (f_1',f_2',id)$ as before, with  $f_1'(x)=\gamma^{2n_1}x+b_1$ and $f_2'(y)=\gamma^{-n_2}y+b_2$, we must have $n_1=n_2$.
		\end{cor}
		
		\begin{proof}
		
			As in Lemma \ref{SigmaIsTrivial}, let $D_3$ be the constant so that if $\Z^2_i$ is sent near $\Z^2_j$, then $|f_1'(x(a_i))-x(a_j)|<D_3$. Analogously, denote $D_4$ a constant so that $\Z^2_i$ being sent near $\Z^2_j$ implies $|f_1'(y(a_i))-y(a_j)|<D_4$. Note that $D_3$ and $D_4$ depend only on $q$. Since $|x(a_i)-x(a_{i\pm 1})|\to \infty$ as $i\to \infty$ and $|y(a_i)-y(a_{i\pm 1})|\to\infty$ as $i\to-\infty$, we choose integers $I_1$ and $I_2$ as follows. Make $I_1$ sufficiently large so that $i>I_1$ implies that $a_{i-n_1}$ is the only attachment point with $|x(q(a_j))-x(a_i)|<D_3$ and make $I_2$ a sufficiently large negative number so that $i<I_2$ implies that $a_{i-n_2}$ is the only attachment point with $|y(q(a_j))-y(a_i)|<D_4$. 
			
			Therefore, denoting $q_*$ as in Corollary \ref {BijectivityOnFlats}, we have $q_*(i-n_1)=i$ for $i>I_1$ and $q_*(i-n_2)=i$ for $i<I_2$. Since $q_*$ is additionally a bijection, we conclude that $q_*[I_2-n_2,I_1-n_1]=[I_2,I_1]$. Therefore, $n_1=n_2$.
			
		\end{proof}
		
		\section{More generators and higher rank}\label{HigherRankSection}
		
		\subsection{Nets in higher-rank solvable Lie groups}

		To construct a space with a multiple-generator scaling group, we will need to use a higher-rank analogue of Sol, namely a Lie Group of the form $G=\R^{2n}\rtimes\R^{2n-1}$. It will be convenient, however, to describe these groups in terms of their Lie algebras. We fix some terminology following \cite{Peng1}. We will refer to the factor of $\R^{2n}=\mathbf{H}$ and the factor of $\R^{2n-1}=\mathbf{A}$, where the Lie algebra $\mathfrak{g}$ of $G$ splits as a sum of two abelian Lie algebras $\mathfrak{g}=\mathfrak{a}\oplus\mathfrak{h}$. $\mathfrak{a}$ is the Cartan subalgebra, with basis denoted $\{a_i\}_{i=1}^{2n-1}$, and acts on $\mathfrak{h}$ by roots $\alpha_1=a_1$, $\alpha_{2n}=-a_{2n-1}$, and $\alpha_i=a_{i}-a_{i-1}$ for $1\le i \le 2n-1$. Each root has a one-dimensional root space whose generator is denoted $h_i$.
		
		The group $G$ is the exponential of this Lie algebra. We will denote $t_i=\exp(a_i)$ and $x_i=\exp(h_i)$. We denote the root spaces in the group $\langle x_i\rangle=V_i\subset \mathbf{H}$. We will abuse notation and apply the roots $\alpha_i$ to vectors $\mathbf{t}\subset \mathbf{A}$, when it is implicitly meant to apply them to the associated element of the Lie algebra. With this notation, if $v_i\in V_i$, then $\mathbf{t} v_i\mathbf{t}^{-1}=e^{\alpha_i(\mathbf{t})}v_i$. As with Sol, we assign the coordinates $(b_1,...b_{2n}, c_1, ...c_{2n-1})=x_1^{b_1}...x_{2n}^{b_{2n}} t_1^{c_1}... t_{2n-1}^{c_{2n-1}}$.
				
		Our main construction will be a higher-dimensional analogue of the rank-one case: adding flats at points spaced along loci $x_{2i-1}x_{2i}^2=1$. The proofs in this section will be more terse, as they are usually the direct analogues of those in the prior section.
			
		We first construct a desired set of F\o lner sequences. As before, \textit{box sets} will refer to products of parameter intervals in the coordinates $(b_1, ... b_{2n}, c_1, ... c_{2n-1})$. Since the sum of the roots is $0$, the group $G$ is unimodular and the Haar measure of this Lie group is $\prod_{i=1}^{2n} dx_i\prod_{j=1}^{2n-1}dt_j$.

		\begin{lemma} \label{HigherRankFolnerSeqs}
			
			For any $j$, there is a F\o lner sequence $S_i$, whose $\mathbf{H}$-coordinates are always boxes which grow only in the $b_j$ coordinate, and shrink in each coordinate $b_k$ for $k\ne j$.
			
		\end{lemma}
		
		Notice that these sets are not required to be boxes in the $\mathbf{A}$ coordinates. 
				
		\begin{proof}
			
			Notice that for any collection $I$ of $2n-2$ of the roots $\alpha_i$, there is full-dimensional cone $\mathbf{C}$ of $\mathbf{A}$ for which $\alpha_i(\mathbf{C})< 0$ for each of the $\alpha_i\in I$. So choose the $\mathbf{C}$ on which all roots other than $\alpha_j$ are negative. Let $\Omega$ be an open subset of $\mathbf{C}$, and let $s_i$ be a positive real sequence tending to $\infty$, and let $s_i\Omega$ denote the dilation of $\Omega$ by a factor of $r_i$. Take $B(s_i\Omega)=\bigl(\prod_{k=1}^{2n} [0, e^{\max(\alpha_k(s_i\Omega))}]\bigr)s_i\Omega=S_i$. $S_i$ is a F\o lner sequence by \cite{Peng1} Lemma 2.2.1. Since the $\alpha_k$ are linear, $\max(\alpha_k(s_i\Omega))=s_i\max(\alpha_(\Omega))$ which is a sequence of negative numbers of increasing magnitude for $k\ne j$.  
			
		\end{proof}
		
		\begin{rk}\label{HigherRankFolnerBoxes}
			Note that the only part of the intervals $[0,e^{\max(\alpha_k(s_i\Omega))}]$ that mattered was their length. Since left multiplication is an isometry, we could left multiply by a power (or sequence of powers) of $x_k$ to shift these F\o lner sequences to have any desired endpoints. Also, note that we could take $\Omega$ to be any closed box inside of $\mathbf{C}$, and in such a case $B(\Omega)$ is honestly a product of parameter intervals. We will work in this setting for convenience.
		\end{rk}
		
		As with the embedding $\text{Sol}\to \h^2\times \h^2$, there is also a quasi-isometric embedding of $G\to \prod_{i=1}^{2n} \h^2$ given as follows (see \cite{Peng1} Lemma 2.1.1 and Remark 2.1.2). We map $(b_1, ...b_{2n}, \mathbf{t})$ to $\prod_{i} (b_i, e^{\alpha_i(\mathbf{t})})$. That is, if we use the plane model, which is the upper half plane model with log-scaled $y$-axis, then $G$ is sent to the locus with total $y$-coordinate $0$ in $\prod_{i=1}^{2n} \h^{2n}$.
		
		If we start with a box $B=\prod [0,1)$, we can left translate $B$ to partition $G$ and call the orbit of the identity $N$. Remark \ref{ExtAndInt} holds essentially verbatim, and so do its consequences. As in Lemma \ref{NPropertiesLemma} and its preliminaries, $N$ is a UDBG net with exponential growth. As in Corollary \ref{NAmenabilityCor}  F\o lner sequences in $G$ restrict to F\o lner sequences in $N$. We again denote $\rho$ the rounding to $N$, so that we will be able to translate between quasi-isometries $q:N\to N$ and $\overline{q}:G\to G$ at cost moving points a bounded distance. 
		
		We also have an analogue of Lemmas  \ref{GBoundarytoNBoundarySpecial} and \ref{GBoundarytoNBoundaryBox} in the higher-rank case. One advantage to working with F\o lner sequences that are boxes (as described in Remark \ref{HigherRankFolnerBoxes}) is that the proof of the second of these is directly analogous to that of Lemma \ref{GBoundarytoNBoundaryBox}, without having to deal with general convex subsets of $\mathbf{A}$.
		
		\begin{lemma}
			
			For each $r>0$, there is some $r_1=r_1(r)$ so that if $S$ is any union of special boxes \newline $S=\bigsqcup_{i=1}^k B_{n_{1,i},n_{2,i},n_{3,i}}$, then $\mu(\partial^G_r(S))\le|\partial^N_{r_1}(S\cap N)|$. 
			
		\end{lemma} 
		
		\begin{lemma}
			
			Let $K>0$, and let $\Omega$ be a closed box in $\mathbf{A}$. Let $S_i=B(s_i\Omega)$ be as before, with each $s_i$ sufficiently large that $s_i\Omega$ is wider than $2\log(K)+1$ in each basis direction. Let $S_i'$ be a sequence of box sets whose $x_j$-coordinate intervals differ from those of $S_i$ by factors $c_{i,j}$ in $[\frac{1}{K}, K]$ (but with $s_i\Omega$ as their $\mathbf{A}$ coordinate). Then for any $r$ there is an $r_1(r,K)$ so that  $\mu(\partial^G_r (S_i'))\le|\partial^N_{r_1}(S_i'\cap N)|$.
			
		\end{lemma}
		
		\subsection{Structure of quasi-isometries}\label{HigherRankScalingIsAffine}
		
		Here we state the structure theorem for $QI(G)$ analogous to Theorem \ref{EFWMainThm}, and use it to prove that scaling maps on $N$ are close to those with affine coordinate maps.	
				
		\begin{thm}[\cite{Peng1,Peng2}] \label{PengMainThm}
			
			Let $q:G\to G$ be a quasi-isometry with $G$ non-degenerate, unimodular, and split abelian-by-abelian. Then $q$ is at finite distance from a \textit{companion quasi-isometry} of the form $q_1:G\to G$ where $q_1=L_g\
			\circ(f_1(x_{\sigma(1)}),...f_{2n}(x_{\sigma(2n)}), Id)$. Here $\sigma\in S_{2n}$ is a coordinate permutation of the basis of $\mathbf{H}$ together with an associated linear transformation of $\mathbf{A}$ (see \cite{Peng2} at the beginning of Section 2.1 for details). $L_g$ is a left-translation by an element of $G$, and the $f_j$ are Bilipschitz maps of $\R$.
			
		\end{thm}
		
		We remark that $G$ is both unimodular and non-degenerate by the discussion in the previous subsection. Unimodularity was discussed already, and non-degeneracy is clear since the roots $\alpha_i$ calculated before span $\mathfrak{a}^*$. As a consequence, we obtain a result analogous to Proposition \ref{SolScalingIsAffineProp}
		
		\begin{prop} \label{HigherRankScalingIsAffineProp}
			If $q:N\to N$ is a scaling quasi-isometry inducing $\overline{q}:G\to G$, and $\overline{q_1}:G\to G$ is its companion map given by $L_g\circ (f_1,...f_{2n},Id)\circ \sigma$. Then the $f_i$ are affine.		
		\end{prop}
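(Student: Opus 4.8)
The plan is to adapt the four-step strategy of Proposition \ref{SolScalingIsAffineProp} to the higher-rank setting, reducing to the case $\overline{q_1}=(f_1,\dots,f_{2n},Id)$ and arguing by contraposition that a non-affine coordinate map forces $q$ to scale by two different values along two F\o lner sequences. First I would use the structure theorem to replace $q$ by its companion $\rho_N\circ\overline{q_1}|_N$, which is scaling iff $q$ is by Lemma \ref{ApproxInvarianceLemma}. Then I would absorb the permutation $\sigma$ (which acts bijectively on lower-left corners of boxes) and the left-translation $L_g$ into coarsely $1$-to-$1$ maps, exactly as in the rank-one case: powers of the $t_i$ that are integral act directly on $N$, fractional powers round down to height-preserving multiplications on the $x$-coordinates that can be folded into the $f_i$, and $\langle x_1,\dots,x_{2n}\rangle$-translations are coordinate shifts at each height that survive rounding. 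After this reduction I may assume $q=\rho_N\circ(f_1,\dots,f_{2n},Id)$ with each $f_i$ being $K$-BiLipschitz.

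Next, arguing by contraposition, suppose some $f_m$ is non-affine, so there are disjoint $x_m$-intervals $I_1,I_2$ of equal length $l$ whose images $f_m(I_1),f_m(I_2)$ have distinct lengths $l_1',l_2'$. I would build two F\o lner sequences of parameter boxes $S_{i,j}$ ($i=1,2$) that are long in the $I_i$ direction and grow in the coordinate $x_{m'}$ paired with $x_m$ under the relevant root (so that the compensating height interval makes the box a genuine F\o lner set per Peng's criterion $a_ia_{i+1}>1$), taking all other coordinate intervals to be fixed unit-scale intervals. The key computations then mirror Steps 2 and 3 of Proposition \ref{SolScalingIsAffineProp}: first verify that $\overline{q_1}(S_{i,j})$ is again F\o lner by comparing it to the box $T_{i,j}$ with the geometrically correct height, using that the $t$-directions are undistorted and that the BiLipschitz bounds give $|\log(\text{length}')-\log(\text{length})|\le\log K$, so the logarithmic heights agree asymptotically and the measure ratio tends to $1$; then estimate both $|\overline{q_1}(S_{i,j})\cap N|$ and $|q^{-1}(\overline{q_1}(S_{i,j})\cap N)|$ by the corresponding Haar measures using Lemmas \ref{NAmenabilityLemma} and \ref{GBoundarytoNBoundary} to pass between $G$-boundaries and $N$-boundaries. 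Finally, in Step 4, dividing through by the volume of $\overline{q_1}(S_{i,j})$ and using that these are F\o lner sequences forces any scaling constant $k$ to equal $\tfrac{l_i'}{l}\lim_j\tfrac{(\text{growing edge image length})}{(\text{growing edge length})}$, a quantity that takes distinct nonzero values for $i=1,2$ because $l_1'\ne l_2'$ while the limiting ratio is the same and bounded below by $1/K$; this contradicts uniqueness of the scaling value and shows $q$ is not scaling.

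The main obstacle, and the only place the higher rank genuinely differs from Sol, is the bookkeeping in the two boundary-conversion lemmas: in rank one there was a single pair of conjugate directions $(x,y)$ with one height coordinate, whereas here each root space $x_i$ pairs with its neighbors through the specific $\alpha_i$, and the analogue of Lemma \ref{GBoundarytoNBoundary} must guarantee that each chosen box $S_{i,j}$ contains an integral height at which the rectangular grid of special boxes in the relevant two-dimensional slice is fine enough to find a nearby net point. I would handle this by choosing $S_{i,j}$ so that only the conjugate pair $(x_m,x_{m'})$ is stretched and the height range in the corresponding $t$-coordinate is the one making that slice F\o lner, exactly as the remark after Lemma \ref{GBoundarytoNBoundary} arranges in rank one; all other coordinate slices stay at unit scale and contribute only bounded, multiplicative constant factors to the box measures and boundary estimates. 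Since the error terms throughout are of the form $O(|\partial^N_r(\cdot)|)$ and the construction isolates a single conjugate pair, the remaining $2n-2$ coordinates behave as passive multiplicative factors and the argument for $f_m$ is, as the paper promises, verbatim the rank-one proof applied in the $(x_m,x_{m'})$-plane.
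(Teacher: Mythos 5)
Your proposal is correct and takes essentially the same route as the paper, whose own proof simply reduces to $q=\rho_N\circ(f_1,\dots,f_{2n},Id)$ exactly as in Proposition \ref{SolScalingIsAffineProp} (coordinate permutations, $x_i$-translations, and integral $t_i$-powers are coarsely $1$-to-$1$ on special boxes, while fractional $t_i$-powers are absorbed into the $f_i$) and then reruns the two-F\o lner-sequence contraposition verbatim in the distinguished pair of coordinates, just as you describe. One small caution on your Step 2 analogue: with the passive coordinate intervals held at fixed scale, the fixed-length height intervals of $\overline{q_1}(S_{i,j})$ and of the comparison box $T_{i,j}$ can differ by up to $\log K$ without their ratio tending to $1$, so the measure ratio only stays bounded rather than converging to $1$ --- which still suffices for the F\o lner property and the final scaling computation --- and the passive intervals should be taken of length greater than $K$ so that both $S_{i,j}$ and $T_{i,j}$ satisfy the box criterion $a_ia_{i+1}>1$ after applying the $K$-BiLipschitz maps.
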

		
		\begin{proof}
			
			We reduce to $q=\rho(f_1,...f_{2n}, Id)$ as in Lemma \ref{ReducingToProduct}. Any coordinate permutation is 1-to-1 on boxes. The left action of any product of integral powers of the $t_j$ is also 1-to-1 on boxes, and the left action of fractional powers of the $t_j$ transforms into a right action at the cost of modifying the $f_j$ in an affine way. Finally, the left action of any power of an $x_j$ is 1-to-1 by the same computation as in Lemma \ref{ReducingToProduct}. 
						
			One change needs to be made to the proof of Proposition \ref{SolScalingIsAffineProp}, because we do not control the widths of the sequences of boxes that the previous subsection provides us. Let $\lambda$ denote the Lebesgue measure on $\R$ as before. If $f_k$ is not affine, then we can find two intervals $I_1$ and $I_2$ in the $x_k$-coordinate have equal length $l$, but where $\lambda(f_k(I_1))>\lambda(f_k(I_2))$. Then it is straightforward to verify that for any length $l'<l$, there are subintervals $I_1'\subset I_1$ and $I_2'\subset I_2$, both of length $l'$, so that $\frac{\lambda(f_k(I_1'))}{\lambda(f_k(I_2'))}\ge \frac{\lambda(f_k(I_1))}{\lambda(f_k(I_2))}>1$. 
			
			Then following the construction of F\o lner sequences in Lemma \ref{HigherRankFolnerSeqs} and Remark \ref{HigherRankFolnerBoxes}, we first find subintervals $I_1'$ and $I_2'$ of $I_1$ and $I_2$ of length less than $1$ satisfying the conclusion of the previous paragraph. This is necessary because the F\o lner sequences in Lemma \ref{HigherRankFolnerSeqs} grow in any $x_j$ coordinate where their starting width is greater than $1$, and we need them to shrink or stay the same. We let $r_i$ be a sequence of real numbers going to $\infty$, and construct two sequences of boxes $S_{i,1}$ and $S_{i,2}$ as in Remark \ref{HigherRankFolnerBoxes}. We take these sequences to match in every coordinate except the $x_k$ coordinate, where at each stage we take the subintervals of $I_1$ and $I_2$ guaranteed by the previous paragraph. We can always arrange this by left-translating by an appropriate power of $x_k$.
			
			We push these forward to sequence $S_{i,1}'$ and $S_{i,2}'$ using an analogous argument to Lemma \ref{ImagesofFolnerSeqs}, and then compute the scaling value for each using the argument of Lemma \ref{SolScalingisNetScaling}. We observe that these scaling values, if they exist, are nonzero and differ by a factor that is not $1$ as in the proof of Proposition \ref{SolScalingIsAffineProp}.
			
		\end{proof}

		\subsection{Attaching Flats}\label{HigherRankMainConstruction}
		
		Let $\Gamma=\langle \gamma_1,...\gamma_n\rangle\subset \R^+$ be a group isomorphic to $\Z^n$. We construct a space $N_\Gamma$ with	 scaling group $\Gamma$ analogously to the main construction in section \ref{SolMainConstruction}.
		
		We will use one attachment locus per generator. Therefore, for $1\le i\le n$, let \newline $A_i=\{\rho(0,...0, \gamma_i^{2j}, \gamma_i^{-j}, 0, ...0)\}=\{a_{i,j}\}$, where the nonzero coordinates of $a_{i,j}$ appear in the coordinates $x_{2i-1}$ and $x_{2i}$. The collection of all $a_{i,j}$ are again the \textit{attachment points}, and $\bigcup_i A_i=A$. We make the following concession to simplify a later argument. We identify each $a_{i,j}$ with the origin in a copy of $\Z^{2n-1+i}$. This will let us use a growth argument to rule out coordinate permutation. This is in fact unnecessary, because the attachment loci have incompatible spacing as in Lemma \ref{SigmaIsTrivial}, but that argument is more complicated.
		
		We will give $N_\Gamma$ the analogous metric to $N_\gamma$, by declaring the distance between points in different flats to be the distance to attachment points plus the distance between attachment points.
		
		The results of this section have proofs analogous to their rank-1 versions.
		
		\begin{lemma}
			$N_\Gamma$ is amenable and UDBG.
		\end{lemma}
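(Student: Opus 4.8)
The plan is to adapt the proof of Lemma \ref{NgammaPropertiesLemma} to the higher-rank setting, exploiting the fact that the three properties---amenability, uniform discreteness, and bounded geometry---are each local in nature and therefore reduce to estimates that are essentially identical to those already carried out in the rank-1 case. Amenability is immediate: since each attached flat is a copy of $\Z^{2n-1+i}$, which is itself amenable, I would produce a F\o lner sequence by taking larger and larger boxes in a single fixed flat $\Z^{2n-1+i}$, pushed off toward infinity so that the fraction of boundary points to interior points tends to $0$. This works exactly as in the rank-1 argument, where the F\o lner sequence lived inside a single $\Z^2_i$.

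For uniform discreteness, I would argue as before: points within a single flat are at distance at least $1$ from each other by the taxicab (or Euclidean) metric, and distinct points of $N$ are separated by some global minimum $s>0$ by the analogue of Lemma \ref{NPropertiesLemma} that holds for the higher-rank net. Because the metric on $N_\Gamma$ is defined as the sum of the within-flat distance, the between-attachment-point distance, and the second within-flat distance, every pair of distinct points is separated by at least $\min\{1,s\}$. The one point to verify is that the higher-rank net $N$ has a positive separation constant $s$, but this was already asserted to hold ``by almost verbatim the same arguments as in Section 2.1.''

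For bounded geometry, I would bound the size of an $r$-ball in $N_\Gamma$ by the size of an $r$-ball in $N$ times the maximum number of flats attached at any single attachment point, times a bound on the number of points of a flat inside an $r$-ball. The first factor is finite by bounded geometry of $N$. For the third factor, each flat is a $\Z^{2n-1+i}$ with polynomial growth, so the cardinality of an $r$-ball in the flat is bounded by a fixed polynomial in $r$. The main step, mirroring the rank-1 argument, is to bound the number of indices $j$ sharing a given attachment point $a_{i,j}=\rho_N(0,\dots,\gamma_i^{2j},\gamma_i^{-j},\dots,0)$; as in Lemma \ref{NgammaPropertiesLemma}, since the nonzero coordinates are $\gamma_i^{2j}$ and $\gamma_i^{-j}$, only finitely many $j$ (controlled by $\lceil\log_{\gamma_i}(2)\rceil$) can round to the same lattice point, and the distinct loci $A_1,\dots,A_n$ occupy disjoint coordinate pairs so they do not collide with one another. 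Combining these three polynomial/finite bounds yields the required uniform bound $B_r$ on $r$-balls.

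The step I expect to require the most care is the bounded-geometry count, specifically confirming that collisions of attachment points remain finitely bounded across \emph{all} $n$ loci simultaneously. Since each locus $A_i$ uses the coordinate pair $(2i-1,2i)$ and these pairs are disjoint, an attachment point of $A_i$ can only coincide with an attachment point of $A_i$ (never of $A_{i'}$ for $i'\neq i$), so the rank-1 collision count applies locus-by-locus and the total multiplicity at any net point is bounded by $\sum_i(\lceil\log_{\gamma_i}(2)\rceil+1)$, a global constant. Everything else is a routine transcription of the rank-1 proof, so I would keep the write-up terse and simply point to Lemma \ref{NgammaPropertiesLemma} for the details.
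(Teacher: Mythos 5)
Your proposal is correct and takes exactly the route the paper intends: the paper gives no written proof for this lemma, stating only that the results of this section ``have proofs analogous to their rank-1 versions,'' and your argument is precisely that transcription of Lemma \ref{NgammaPropertiesLemma}. Your one genuinely new check---that attachment points from distinct loci $A_i$, $A_{i'}$ never round to the same net point because each $a_{i,j}$ has a nonzero floor in one of the coordinates $2i-1,2i$ while points of other loci vanish there, so the collision count is locus-by-locus---is exactly the detail the paper leaves implicit, and you handle it correctly.
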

		
		\begin{thm}
			[\cite{KleinerLeeb} 1.2.5]
			Let $\Phi:\R^{2n}\to\bigl(\h^2\bigr)^{2n}$ be a $K$ quasi-isometry. Then there is a number $D(K)$ so that $\Phi(\R^{2n})$ lies in the $D$-neighborhood of a union of at most $D$ maximal flats.	
		\end{thm}
		
		\begin{cor} 
			There is no quasi-isometric image of $\R^{2n}$ in $G$ and therefore no quasi-isometric image of $\Z^{2n}$ in $N$.
		\end{cor}
		
		\begin{proof}
			We analyze cases of maximal flats in $\bigl(\h^2\bigr)^{2n}$ as in the case of $\bigl(\h^2\bigr)^2$. They are all products of one geodesic $\eta_i$ in each factor. 
			
			If at least one of the $\eta_i$ is vertical (as in the top two cases of figure \ref{SolFlats}, then the intersection $\Phi(G)\cap \prod_j\eta_j$ has dimension $2n-1$, since we can choose any coordinate in the $\eta_j$ for $j\ne i$, an then choose the (unique) coordinate on $\eta_i$ to meet $\Phi(G)$. A $D$-neighborhood of $\Phi(G)$ similarly meets $\prod_j\eta_j$ in a thickening of this codimension-1 locus. If instead none of the $\eta_i$ is vertical, then as in the bottom case of figure \ref{SolFlats}, $\Phi(G)$ meets $\prod_j\eta_j$ in a compact (or empty) subset, as does its $D$-neighborhood.
			
			As a result, a quasi-isometrically embedded copy of $\Z^{2n}$ in $G$ would push forward by $\Phi$ and then by moving points a distance of $D$ to a quasi-isometrically embedded copy of $\Z^{2n}$ in a union of at most $D$ regions, each with growth of order at most $2n-1$. This is impossible.
		\end{proof}
		
		We next prove the following proposition, analogously to Proposition \ref{FlatPreservingProp}.
		
		\begin{prop}\label{FlatPreservingPropHigherRank}
			If $q:N_\Gamma\to N_\Gamma$ is any quasi-isometry, then there is some $D$ depending only on the quasi-isometry constant and $n$ so that $q(N)$ is in the $D$-neighborhood of $N$ and $q(A)$ is in the $D$-neighborhood of $A$. 
		\end{prop}
		
		\begin{proof}
			
			Large portions of $N$ cannot be mapped into a flat, because $N$ grows exponentially and each flat grows polynomially. This bounds how far $N$ can be sent off of itself, and how far $A$ can be sent away from $N$.
			
			The embedding of $G$ into a product of hyperbolic spaces provides a maximal rank of quasi flats in $G$ using \cite{EskinFarb,KleinerLeeb}. Since the flats attached to $N_\gamma$ exceed this rank, they must be mapped by $q$ close to flats. If $\Z^{m_1}_{j_1}$ is sent into $\Z^{m_2}_{j_2}$, then note first that $m_1\le m_2$ by comparing growth rates. Equality follows from considering the coarse inverse.
			
			If the attachment point for $\Z^{m_2}_{j_2}$ is $a_{i,j_2}$, then we use the same argument as in Proposition \ref{FlatPreservingProp} to say that if a sufficiently large portion of $\Z^{m_1}_{j_1}$ is mapped to $N$, then it will have a boundary of large diameter that is mapped close to $a_{i,j_2}$, in violation of the quasi-isometric inequality.
			
		\end{proof}

		The following lemma arises by a calculation analogous to Lemmas \ref{FinitelyManyFlatsPerAttachment} and \ref{BoundaryManyAttachmentPoints}.
		
		\begin{lemma} \label{BoundaryManyAttachmentPointsHigherRank}
			If $S\subset N$ is a finite subset, then the number of attachment points in $S$ is at most $|\partial_r(S)\cap N|$.
		\end{lemma}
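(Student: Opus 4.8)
The plan is to port the rank-$1$ argument of Lemma \ref{BoundaryManyAttachmentPoints} essentially verbatim, exploiting the fact that each attachment locus $A_i$ lives inside a single coordinate $2$-plane and that the remaining $t$-directions are undistorted in $G$. The key structural observation is that the attachment points of $A_i$ all sit at height $0$ (all $t_j = 0$) and lie along the codimension-high locus $x_{2i-1}x_{2i}^2 = 1$, so the number of them meeting a finite set $S$ is controlled by a low-dimensional ``horizontal surface area'' of the box-hull of $S$, which is in turn controlled by the boundary.

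First I would let $T \subset G$ denote the union of special boxes containing points of $S$, as in the rank-$1$ proof. The essential point is that because all attachment points lie at height $0$, a net point of $A_i \cap S$ is detected by the slice of $T$ at height $0$. I would project $T$ onto the relevant low-dimensional coordinate subspace; since the $a_{i,j}$ are parameterized by a single integer $j$ and thus lie along a one-parameter family within the $(x_{2i-1}, x_{2i})$-plane, the count of attachment points of a fixed locus $A_i$ in $S$ is bounded by a $1$-dimensional measure of a projection of $T$, hence a fortiori by the full horizontal surface area. As in Lemma \ref{BoundaryManyAttachmentPoints}, because the $t$-directions are undistorted, this surface area is bounded below by $\mu(\partial^G_1(T))$, which by the argument of Lemma \ref{NAmenabilityLemma} is bounded by $|\partial^G_{1+D_B}(T) \cap N|$, and finally, since $T$ is a union of special boxes, rounding gives the containment $\partial^G_{1+D_B}(T) \cap N \subset \partial^N_{1+2D_B}(S)$. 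Summing over the finitely many loci $A_1, \dots, A_n$ (at the cost of a factor of $n$, which is absorbed into the implicit constant in the statement, or handled by possibly enlarging $r$) yields the claimed bound $|S \cap A| \le |\partial^N_r(S)|$.

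The one point requiring slightly more care than the rank-$1$ case is that there are now $2n-1$ undistorted $t$-directions rather than one, and $2n$ horizontal $x$-directions, so I must make sure the projection argument isolates genuinely the correct low-dimensional slice: the attachment locus $A_i$ occupies only the two coordinates $x_{2i-1}, x_{2i}$ among the $x$'s and sits at $t = 0$, so the number of its points in $S$ is bounded by the area of the projection of the height-$0$ slice of $T$ to the $(x_{2i-1}, x_{2i})$-plane. This is exactly the situation of the rank-$1$ lemma applied in a single coordinate pair, with the spectator coordinates contributing nothing to the count but only enlarging the ambient boundary, which is harmless for an upper bound.

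The main obstacle, such as it is, is purely bookkeeping: ensuring that the reduction from the full set $A = \bigcup_i A_i$ to individual loci $A_i$ does not introduce a dependence on anything unbounded, and that the ``height-$0$ slice'' bound is uniform across all the different undistorted directions. Since $n$ is a fixed finite number and all the geometric estimates (undistortedness of $t$-directions, the $D_B$-roundings, the comparison of box measure to net-point count) are identical to those already established in Lemmas \ref{NAmenabilityLemma} and \ref{GBoundarytoNBoundary} in their higher-rank incarnations, I expect no genuine difficulty. The proof will therefore read as a direct analogue of Lemma \ref{BoundaryManyAttachmentPoints}, invoking the higher-rank versions of the supporting lemmas asserted at the start of Section \ref{HigherRankSection}.
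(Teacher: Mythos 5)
Your overall strategy is the paper's intended one: the paper gives this lemma no separate proof, saying only that the results of Section \ref{HigherRankSection} have proofs analogous to their rank-1 versions, and your port of Lemma \ref{BoundaryManyAttachmentPoints} (box hull $T$, localization to height $0$, horizontal projection, an undistorted $t$-direction to convert horizontal volume into $\mu(\partial^G_1(T))$, then the measure-to-net-count conversions as in Lemma \ref{NAmenabilityLemma} and the rounding containment into $\partial^N_{1+2D_B}(S)$) is exactly the right skeleton. However, your per-locus reduction contains two genuine errors. First, the count of $A_i$-points in $S$ is \emph{not} bounded by any ``$1$-dimensional measure'' of a projection: for $j<0$ the coordinates $\gamma_i^{2j}$ all round to $0$, so the $a_{i,j}$ collide under projection to the $x_{2i-1}$-axis (and symmetrically the $x_{2i}$-coordinates collide for $j>0$); the correct statement, which your second paragraph in effect retreats to, is that distinct attachment points occupy distinct special boxes projecting to distinct \emph{unit squares}, so the bound is by $2$-dimensional area. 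Second, the ``a fortiori'' step---area of a $2$-plane shadow bounded by the $2n$-dimensional volume of the full horizontal projection---is false for general sets (a thin slab has tiny $2n$-volume and a huge $2$-plane shadow); it is saved here only because the height-$0$ slice of $T$ is a union of grid-aligned unit cubes, so each unit square in the shadow is hit by at least one unit cube, a justification you never supply. Relatedly, your factor of $n$ from summing over loci conflicts with the statement, which has constant $1$ and \emph{no} implicit constant, and neither proposed remedy works: enlarging $r$ only makes $\partial^N_r(S)$ larger as a set, it does not multiply its cardinality by $n$.

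All three problems vanish if you do not split into loci at all, which is the genuinely verbatim port. Distinct attachment points---across all the $A_i$---lie in distinct special boxes at height $0$, and height-$0$ boxes are unit cubes distinguished by their horizontal positions; so projecting all of $T$ to $\R^{2n}$ (all $x$-coordinates at once) sends these boxes to pairwise disjoint unit cubes, giving $|S\cap A|\le \alpha$ where $\alpha$ is the $2n$-volume of the projection. Then, exactly as in the rank-1 proof, run the undistorted-direction argument along a single coordinate, say $t_1$: each special box is a unit interval in $t_1$ and a unit cube in the remaining $t$-coordinates, so the fibers of the projection killing $t_1$ have measure at least $1$ over each horizontal point of $T$, and the unit segments sitting above the $t_1$-tops of fibers of $T$ give $\mu(\partial^G_1(T))\ge \alpha$ by Fubini. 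The rest of your chain, $\mu(\partial^G_1(T))\le |\partial^G_{1+D_B}(T)\cap N|$ and $\partial^G_{1+D_B}(T)\cap N\subset \partial^N_{1+2D_B}(S)\subset \partial_r(S)\cap N$, then goes through with constant $1$. (Also note you wrote that the surface area is ``bounded below by $\mu(\partial^G_1(T))$''; the inequality you need, and the one the rank-1 lemma proves, is the reverse.)
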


		\subsection{The Main Theorem in Full Generality}
		
		In this section, we assemble the main theorem. The only substantial difference between this and the rank-1 case will be in eliminating coordinate permutations.
		
		\begin{thm}
			$Sc(N_\Gamma)=\Gamma$
		\end{thm}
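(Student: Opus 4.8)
The strategy directly parallels the proof of Theorem \ref{SolMainThm}, with the main new ingredient being a growth argument to rule out coordinate permutations. First I would use the flat-preserving proposition to reduce, up to moving points a bounded distance, to a $k$-scaling quasi-isometry $q$ that sends $N$ to $N$ and each $\Z^{2n-1+i}$ flat to another flat. As in the rank-1 case, a one-endedness argument shows $q$ is injective on the collection of flats, and coarse density gives surjectivity. I would then verify that $q|_N$ is itself scaling, by the identical boundary-comparison bookkeeping: restricting to $N$ changes both $|q^{-1}(S)|$ and the relevant boundary only by $O(|\partial^N_r(S)|)$, using the lemma bounding the number of attachment points in $S$ by $|\partial^N_r(S)|$ together with the bounded-to-one property on flats. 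This lets me invoke Proposition \ref{HigherRankScalingIsAffineProp} to conclude the companion map $\overline{q|_N}$ is finite distance from $L_g\circ(f_1,\dots,f_{2n},\mathrm{Id})$ with each $f_i$ affine.

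\textbf{Eliminating the permutation.} Here is where the higher-rank case genuinely differs, and I expect this to be the main obstacle. In the rank-1 case the single permutation $\sigma$ was ruled out by the incompatible spacing ($\gamma$ versus $\gamma^2$) of the two ends of the attachment locus; with $n$ distinct loci the combinatorics of spacing-matching becomes unwieldy, which is exactly why the construction deliberately assigns the flat $\Z^{2n-1+i}$ of \emph{different dimension} to each locus $A_i$. So instead of a spacing argument I would run a growth argument: a coordinate permutation $\sigma$ that does not fix the pair $\{2i-1,2i\}$ setwise would force the locus $A_i$ to be sent coarsely to the image of a \emph{different} locus $A_{i'}$ (or off the loci entirely). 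But the flats attached along $A_i$ have polynomial growth of degree $2n-1+i$, and $q$ restricted to a flat is a quasi-isometry onto the flat it lands in; since quasi-isometric flats must have equal dimension, $q$ can only carry the flats of $A_i$ to the flats of $A_{i'}$ when $2n-1+i=2n-1+i'$, i.e. $i=i'$. This pins down $\sigma$ to permute each pair $\{2i-1,2i\}$ within itself, and the same orientation argument as in the rank-1 corollary (comparing the asymptotic spacing by $\gamma_i^2$ on the $x_{2i-1}$-end against $\gamma_i$ on the $x_{2i}$-end) forbids swapping the two coordinates within a pair. Hence $\sigma$ is trivial.

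\textbf{Pinning the scaling factors.} With $\sigma$ trivial, I would argue exactly as in Theorem \ref{SolMainThm} that the linear part $m_{2i-1}$ of $f_{2i-1}$ must be a power of $\gamma_i^2$ and the linear part $m_{2i}$ of $f_{2i}$ a power of $\gamma_i$, since any other multiplicative constant fails to coarsely preserve the $x_{2i-1}$- or $x_{2i}$-coordinates of the discretely-spaced attachment points $a_{i,j}$ as $j\to\pm\infty$. The argument that multiplication by $m_i\neq\gamma_i^{\pm}$ pushes $a_{i,j}$ arbitrarily far in coordinates—and hence, since metric distance is unbounded in coordinate distance, off the locus—is verbatim the rank-1 argument applied one locus at a time. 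By Proposition \ref{HigherRankScalingIsAffineProp} together with the converse (the higher-rank analogue of Proposition \ref{SolAffineIsScalingProp}), the scaling factor of $q|_N$ equals $\prod_i (m_{2i-1}m_{2i})^{-1}$, which is a product of powers of the $\gamma_i$ and hence lies in $\Gamma$. To transfer the scaling value from $N$ to $N_\Gamma$, I would take a F\o lner sequence $S_\ell$ pushed far into negative coordinates so that $\partial_\ell(S_\ell)\subset N$, making the scaling value on $N_\Gamma$ agree with that of $q|_N$, which is in $\Gamma$; this gives $Sc(N_\Gamma)\subset\Gamma$.

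\textbf{Achieving every element of $\Gamma$.} For the reverse inclusion it suffices to realize each generator $\gamma_i$ by an explicit scaling quasi-isometry, since scaling values are multiplicative under composition. I would define $q_i:N_\Gamma\to N_\Gamma$ to act on $N$ by the affine map $(x_{2i-1},x_{2i})\mapsto(\gamma_i^2 x_{2i-1},\gamma_i^{-1}x_{2i})$ (identity on the remaining coordinates), to shift $\Z^{2n-1+i}_j$ to $\Z^{2n-1+i}_{j+1}$ along the locus $A_i$, and to act by the analogous floor map on each flat. Proposition \ref{SolAffineIsScalingProp}'s higher-rank analogue shows $q_i$ scales $N$ by $\gamma_i$; summing the flat-by-flat error as in Theorem \ref{SolMainThm}, with all boundary terms absorbed into $|\partial_r(S)|$ via the attachment-point lemma, shows $q_i$ is $\gamma_i$-scaling on all of $N_\Gamma$. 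Composing these and their inverses realizes every element of $\Gamma$, completing the proof that $Sc(N_\Gamma)=\Gamma$.
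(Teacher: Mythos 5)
Your proposal is correct and follows essentially the same route as the paper: reduce to a flat- and net-preserving representative, show $q|_N$ is scaling via the attachment-point boundary bookkeeping, invoke Proposition \ref{HigherRankScalingIsAffineProp}, rule out within-pair swaps by the $\gamma_i$-versus-$\gamma_i^2$ spacing argument and cross-pair permutations by the polynomial-growth mismatch of the flats $\Z^{2n-1+i}$ (exactly the purpose for which the paper attaches flats of distinct dimensions), and then pin the linear parts to powers of $\gamma_i$ and realize each generator explicitly. The only differences are cosmetic, such as the order in which the two kinds of permutations are eliminated and your explicit spelling-out of the F\o lner-transfer step that the paper inherits from Theorem \ref{SolMainThm}.
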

				
		\begin{proof}
			
			Let $q:N_\Gamma\to N_\Gamma$ be $k$-scaling, send $N$  to $N$ and flats to flats. Denote $\overline{q|_N}$ the map $q|_N$ induces on $G$, $\overline{q_1}$ the companion map to $\overline{q|_N}$, and $f_i$ the coordinate maps of $\overline{q_1}$. 
			
			As in Proposition \ref{ScalingRestrictsToNet}, we observe that $q|_N$ is scaling. To do this we take a finite subset $S\subset N$ and note that $q|_N^{-1}(S)$ differs from $q^{-1}(S)$ only by undercounting the pre-images of attachment points. There are at most $|\partial^N_r(S)|$ of these by Lemma \ref{BoundaryManyAttachmentPointsHigherRank}, each with bounded pre-image. Similarly, $\partial_r(S)$ differs from $\partial^N_r(S)$ only by removing some balls about attachment points near $S$. The number of such attachment points is at most $|\partial^N_r(S)|$, and the sizes of balls in flats and number of flats identified to a single attachment point are both bounded. Hence we have $|\partial_r(S)|\le C_1|\partial^N_{r'}(S)|$, so that the statement that 
			
			$$||q^{-1}(S)|-k|S||\le C|\partial_r(S)|$$ 
			
			implies that $$||q|_N^{-1}(S)|-k|S||\le C|\partial_{r_1}(S)|\le C'|\partial^N_{r'}(S)\cap N|$$ for some altered constant $C'$ depending only on $C$ and $r'$ depending only on $r$. Thus $q|_N$ is scaling, and the $f_i$ are affine by Proposition \ref{HigherRankScalingIsAffineProp}.
			
			$\overline{q_1}$ cannot swap any pair of coordinates $(x_{2i-1},x_{2i})$ for the same reason as in Lemma \ref{SigmaIsTrivial}. That is, if it did, it would need to send $A_i$ to itself. However, $a_{i,j}$ for $j>>0$ are spaced by factors of $\gamma_i^2$ in the $x_{2i-1}$ coordinate, and $a_{i,j}$ for $j<<0$ are spaced by factors of $\gamma_i$ in the $x_{2i}$ coordinate. Swapping the two would map $a_{i,j}$ for $j<<0$ arbitrarily fair from $A_i$, in violation of Proposition \ref{FlatPreservingPropHigherRank}.
			
			If $\sigma$ makes some other permutation, some coordinate $x_{2i_1}$ or $x_{2i_1-1}$ must be sent to an $x_{2i_2}$ or $x_{2i_2-1}$ for $i_2<i_1$. But then attachment points at large $x_{2i_1}$ or $x_{2i_1-1}$ coordinates are sent near only attachment points at large $x_{2i_2}$ or $x_{2i_2-1}$ coordinates. Eventually, the image of such a point therefore must be within $D$ of only attachment points adjoined to flats of dimension $\Z^{2n-i+i_2}$, so that $\Z^{2n-1+i_1}$ must be quasi-isometrically mapped within finite distance of a copy of $\Z^{2n-1+i_2}$. But this is impossible since balls in $\Z^{2n-1+i_1}$ grow as a higher-degree polynomial than those in $\Z^{2n-1+i_2}$.
			
			We observe as in Proposition \ref{PowersOfGamma} that coarsely preserving the $A_i$ requires that the linear terms of $f_{2i}$ and $f_{2i-1}$ must be multiples of $\gamma_i$, by considering the limits $\lim_{j\to \infty} \frac{x_{2i-1}(q(a_{i,j}))}{x_{2i-1}(a_{i,j})}$ and $\lim_{j\to -\infty} \frac{x_{2i}(q(a_{i,j}))}{x_{2i}(a_{i,j})}$. The scaling value of a quasi-isometry $q$ is therefore a product of powers of the $\gamma_i$, by considering some F\o lner sequence far from the attachment locus and applying an analogue of Proposition \ref{CalculatingScalingValues}.
			
			This shows that $Sc(N_\Gamma)\subset \Gamma$. To show equality, we construct quasi-isometries $q_i$ with scaling value $\gamma_i$ for each $i$. We do this as in the proof of \ref{SolMainThm}. That is $\gamma_i|_N$ is a rounding of a map that divides by  $\gamma_i^2$ on $x_{2i-1}$, multiplies by $\gamma_i$ on $x_{2i}$, sends $\Z^{2n-1+i}_{i,j}$ to $\Z^{2n-1+1}_{i,j+1}$, and divides by $\gamma$ on some coordinate of $\Z^{2n-1+i}$ (followed by rounding). The verification that $q_i$ is a quasi-isometry with scaling value $\gamma_i$ follows along the same lines as in the proof of Theorem \ref{SolMainThm}.
			
		\end{proof}
		
		As before, it is a consequence of this argument that scaling maps act as translations along the tails of the sets $A_i$, and therefore must translate the same distance to maintain bijectivity.
		
		\begin{cor}
			With notation as before, if $f_{2i-1}(x_{2i-1})=\gamma_i^{2n_1}x_{2i-1}+b_1$ and $f_{2i}(x_{2i})=\gamma_i^{n_2}x_{2i}+b_2$, then $n_1=-n_2$,
		\end{cor}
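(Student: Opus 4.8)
The plan is to mirror the argument in the corollary to Theorem \ref{SolMainThm} exactly, now restricted to a single attachment locus $A_i$ where the relevant coordinates are $x_{2i-1}$ and $x_{2i}$. Having already established in the main theorem that $\overline{q_1}$ makes no coordinate permutation, we know $f_{2i-1}$ and $f_{2i}$ are honest affine self-maps of their own coordinate axes, and that the linear terms are powers of $\gamma_i$ as asserted. The goal is to show the two exponents are negatives of one another, and the mechanism is exactly the one-endedness/bijectivity bookkeeping from the rank-1 corollary.

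First I would analyze the action of $q$ on the tails of $A_i$. The attachment points $a_{i,j}=\rho_N(0,\dots,\gamma_i^{2j},\gamma_i^{-j},0,\dots,0)$ have $x_{2i-1}$-coordinate asymptotic to $\gamma_i^{2j}$ and $x_{2i}$-coordinate asymptotic to $\gamma_i^{-j}$. Since $f_{2i-1}(x)=\gamma_i^{2n_1}x+b_1$ is affine, as $j\to\infty$ the point $f_{2i-1}$ sends the $x_{2i-1}$-coordinate of $a_{i,j}$ to within bounded coordinate distance of the $x_{2i-1}$-coordinate of $a_{i,j+n_1}$, and to arbitrarily large coordinate distance from every other $a_{i,j'}$; because metric distance grows without bound in coordinate distance (Remark \ref{FlatsInSolRk}), the remaining bounded-translation factors $L_{x^{s}}\circ R_{t^{s}}$ cannot close that gap. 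Hence $q(a_{i,j})$ is near $a_{i,j+n_1}$ and no other attachment point for all large $j$, so $q$ carries the flat $\Z^{2n-1+i}$ glued at $a_{i,j}$ into the one glued at $a_{i,j+n_1}$ once $j\ge m_1$. The identical argument using $f_{2i}(x)=\gamma_i^{n_2}x+b_2$ and the $x_{2i}$-coordinate, where spacing is governed by powers of $\gamma_i$ as $j\to-\infty$, shows $q(\Z^{2n-1+i})$ glued at $a_{i,j}$ lands in the one at $a_{i,j+n_2}$ for all $j\le -m_2$.

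Next I would invoke bijectivity of $q$ on the collection of flats, established in the proof of the main theorem via one-endedness of the flats and the coarse inverse $\widetilde{q}$. On the index set $\{a_{i,j}:j\in\Z\}$, the map $q$ therefore acts as the shift $j\mapsto j+n_1$ on the tail $j\ge m_1$ and as $j\mapsto j+n_2$ on the tail $j\le -m_2$. A shift by $+n_1$ on the upper tail together with a shift by $+n_2$ on the lower tail can be completed to a bijection of $\Z$ only if $n_1=n_2$, since the finite middle block $(-m_2,m_1)\cap\Z$ must be carried bijectively onto $(-m_2+n_2,\,m_1+n_1)\cap\Z$, forcing the two shifted tails to be disjoint and jointly cofinite with the image of the middle. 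Combining $n_1=n_2$ with the constraint already recorded that a scaling map induces a power of $\gamma_i$ on each relevant pair of coordinates and that the net coordinate volume scaling factor $\gamma_i^{2n_1}\cdot\gamma_i^{n_2}$ be compatible with the attachment-point spacing yields $n_1=-n_2$ as stated.

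The main obstacle is the step forcing $q(\Z^{2n-1+i})$ near $a_{i,j}$ to land in the flat at $a_{i,j+n_\bullet}$ uniformly in $j$: one must be careful that the bounded-distance translation pieces $L_{x^{s_1}\cdots}\circ R_{t^{\cdots}}$ really do not spoil the asymptotic identification, and that the image flat is unique rather than split among several nearby attachment points. This is precisely where the device of gluing flats of strictly increasing dimension $\Z^{2n-1+i}$ pays off: it already forced the absence of cross-locus permutations in the main theorem, and here it guarantees that $q$ cannot merge or split flats across distinct loci, so the index-shift picture on a single $A_i$ is clean and the elementary bijection-of-$\Z$ argument applies verbatim as in the rank-1 corollary.
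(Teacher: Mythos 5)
Your overall route is exactly the paper's: use the affine linear parts to show that $q$ acts as an index shift on each tail of the attachment locus $A_i$, then use bijectivity of $q$ on the collection of flats to match the two shifts across the finite middle block $(-m_2,m_1)\cap\Z$. That mechanism is correct, and your handling of the uniformity of the tail identification and of flat-uniqueness via the increasing dimensions $\Z^{2n-1+i}$ matches the paper. But your execution contains a sign error that you then patch with an invalid step. Multiplying the $x_{2i}$-coordinate $\gamma_i^{-j}$ of $a_{i,j}$ by $\gamma_i^{n_2}$ gives $\gamma_i^{n_2-j}=\gamma_i^{-(j-n_2)}$, so on the lower tail $q(a_{i,j})$ is near $a_{i,j-n_2}$, \emph{not} $a_{i,j+n_2}$: the index shift there is $-n_2$. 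With shifts $+n_1$ on the upper tail and $-n_2$ on the lower tail, the bijection-of-$\Z$ argument immediately yields $n_1=-n_2$, which is the statement; no further input is needed.

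As written, you instead conclude $n_1=n_2$ and then try to convert this to $n_1=-n_2$ by appealing to ``compatibility of the net coordinate volume scaling factor $\gamma_i^{2n_1}\cdot\gamma_i^{n_2}$ with the attachment-point spacing.'' That step is not a valid deduction: the induced Haar-measure scaling is $\gamma_i^{-(2n_1+n_2)}$, and its membership in $\Gamma$ imposes no relation between $n_1$ and $n_2$, since every power of $\gamma_i$ lies in $\Gamma$. Worse, if both $n_1=n_2$ and $n_1=-n_2$ held you would be forced to $n_1=n_2=0$, contradicting the explicit $\gamma$-scaling map constructed in the proof of Theorem \ref{SolMainThm}, which sends $\Z^2_i$ to $\Z^2_{i+1}$ with $f_1(x)=\gamma^2x$ and $f_2(y)=\gamma^{-1}y$, i.e.\ $n_1=1$, $n_2=-1$. (In fairness, the paper's own proof of the rank-1 corollary contains the same sign slip, writing $a_{i+n_2}$ and concluding $n_1=n_2$ even though the statement asserts $n_1=-n_2$; the correct tail computation gives $a_{i-n_2}$ and hence the stated conclusion directly.) So the fix is simply to redo the lower-tail computation and delete the volume-compatibility step, after which your argument coincides with the paper's.
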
	
		
		\section{Improvements to the construction}\label{misc}
		
		We conclude with a few minor improvements to the main construction. First of all, we describe how to do this entire construction in the setting of metric measure spaces.
		
		\begin{rk}
			
			Instead of using the net $N$ and flats $\Z^n$, we could have attached copies of $\R^n$ to the groups $G$ in the same way as in the main construction. If we call the resulting space $G_\Gamma$, and endow it piecewise with the Haar measure on $G$ and the Lebesgue measure on the flats, then $G_\Gamma$ becomes a metric measure space. These spaces have $\Gamma$ as their scaling groups by the same argument as in the case of $N_\Gamma$: scaling quasi-isometries must have affine coordinate maps, and all quasi-isometries must (coarsely) preserve the attachment locus. In fact the proofs are easier, since we do not need to estimate the errors that arise from rounding and discretizing. For instance, the continuous analogue of the map $q$ constructed in the proof of Theorem \ref{SolMainThm} is obviously a quasi-isometry that scales the measure by $\gamma$.
			
		\end{rk}
		
		Next we show that this construction could instead have been carried out with graphs.
		
		\begin{prop}
			
			The spaces $N_\Gamma$ constructed in the previous sections are bi-Lipschitz equivalent to graphs with combinatorial metric.
			
		\end{prop}
		
		\begin{proof}
			
			We will construct graphs whose vertex sets are $N$ and $N_\Gamma$, and show that the combinatorial distance on these graphs is quasi-isometric to the metrics given in the construction of $N$ and $N_\Gamma$. This will then imply the result, since a bijective quasi-isometry of UDBG spaces is a bi-Lipschitz equivalence.
			
			%Does the above need to be cited?
			
			We start with the case of $N$, and recall that $D_B$ denotes the diameter of the boxes used to generate $N$. We will abuse notation slightly in suppressing dependence on rank. Then we form the $(1+2D_B)$-Rips graph on the set $N$, with respect to the induced metric from the ambient Lie group $G$. We denote this graph $X$, and its combinatorial metric $d_X$. 
			
			One direction of the quasi-isometry is straightforward. Given a geodesic edge path in $X$ of length $n$, denote the vertices it traverses $(p_i)_{i=0}^n$. Then by the definition of $X$, $d_G(p_i, p_{i+1})\le 1+2D_B$. Hence the concatenation of the $G$-geodesics from $p_i$ to $p_{i+1}$ gives a path in $G$ of length at most $n(1+2D_B)$ in $G$. Hence the identity map $id:(N, d_X)\to (N, d_G)$ is $1+2D_B$ Lipschitz.

			For the other direction, $p$ and $p'$ be points of $N$, and denote $n=\lfloor d_G(p,p')\rfloor$. Let \newline $\gamma:[0, d_G(p,p')]\to G$ be a length-minimizing geodesic between $p$ and $p'$ traversed at unit speed. Consider the sequence of net points $p=p_0, p_1, ...p_n, p_{n+1}=p'$ where $p_i=\rho(\gamma(i))$ for $0<i<n+1$.
			
			Since $\rho$ moves points by a distance at most $D_B$, and $d_G(\gamma(i), \gamma(i+1))=1$ for $0\le i <n$ by assumption, $d_G(p_i, p_{i+1})\le 1+2D_B$ so that there is an edge $[p_i, p_{i+1}]$ in the graph $X$. Since $d_G(p, p')<n+1$, $d_G(p_n, p_{n+1})<1$ and therefore the same holds for $i=n$. Hence there is an edge path in $X$ of length $\lceil d_G(p, p')\rceil$ between $p$ and $p'$. The identity map $id:(N, d_G)\to (N, d_X)$ is therefore a $(1,1)$ quasi-isometry as required.
			
			Now consider the space $N_\Gamma$. Form the graph $X$ on the subspace $N$ of $N_\Gamma$. For each flat, add the edges to make the flat into the Cayley graph of $\Z^n$ with respect to the standard generating set. Call the resulting graph $X_\Gamma$.
			
			So let $p$ and $p'$ be points of $N_\Gamma$, and consider their distances $d_{X_\Gamma}(p,p')$ and $d_{X_\Gamma}(p,p')$. By Definition \ref{DefinitionOfMainSpace}, we have that $d_{N_\Gamma}(p,p')$ splits into a sum of at most 3 terms: A $d_G$-distance in $N$ and two taxicab distances in flats. An edge path in $X_\Gamma$ between two flats must pass through the vertices of the attachment locus. Therefore any edge path in $X_\Gamma$ can be broken into at most 3 sub-paths: one in the $X$ subgraph and two along Cayley Graphs. Since the taxicab metrics on $\Z^n$ is the same as the Cayley Graph metric for the standard generating set, we determine that $N_\Gamma$ and $X_\Gamma$ ar Bi-Lipschitz with the same constants as $N$ and $X$ were. 
		\end{proof}
		
		Finally, we describe an alternative construction of the spaces $N_\Gamma$ for which the bound on the geometry (or equivalently the maximum degree of the bi-Lipschitz equivalent graph) depends only on the number of generators $\gamma_i$ of $\Gamma$, rather than also depending on the magnitudes of $|\log(\gamma_i)|$ as in Lemma \ref{FinitelyManyFlatsPerAttachment}.
		
		\begin{rk}
			
			Suppose $\Gamma=\langle \gamma_i\rangle$ for a minimal collection of $\gamma_i>1$. For each $\gamma_i$, take $m_i$ so that $\gamma_i^{m_i}>2$. Then instead of attaching flats along the locus $x_{2i-1}x_{2i}^2=1$, we could have attached along the locus $x_{2i-1}^{m_i}x_{2i}^{{m_i}+1}=1$, with attachment point $a_{i,j}$ at coordinate $\rho(0,...0,\gamma_i^{(m_i+1)j},\gamma_i^{-m_ij},0,...0)$. These points are spaced by factors of at least $\gamma_i^{m_i}>2$ in each coordinate, so that no two attachment points can round to the same net point (as was the case in Lemma \ref{FinitelyManyFlatsPerAttachment} when $\gamma>2$). An explicit quasi-isometry scaling by $\gamma$ would be given by dividing $x_{2i-1}$ by $\gamma^{m_i+1}$ and multiplying $x_{2i-1}$ by $\gamma^{m_i}$.
			
		\end{rk}

		\bibliographystyle{plain}
		\bibliography{Scaling_Groups}

\begin{thebibliography}{10}

\bibitem{BlockWeinberger}
Jonathan Block and Shmuel Weinberger.
\newblock Aperiodic tilings, positive scalar curvature, and amenability of
  spaces.
\newblock {\em Journal of the American Mathematical Society}, 5(4):907 -- 918,
  1992.

\bibitem{Dymarz1}
Tullia Dymarz.
\newblock Bijective quasi-isometries of amenable groups.
\newblock In {\em Geometric methods in group theory}, volume 372 of {\em
  Contemp. Math.}, pages 181--188. Amer. Math. Soc., Providence, RI, 2005.

\bibitem{Dymarz2}
Tullia Dymarz.
\newblock Bilipschitz equivalence is not equivalent to quasi-isometric
  equivalence for finitely generated groups.
\newblock {\em Duke Mathematical Journal}, 154(3), 2010.

\bibitem{DymarzNavas}
Tullia Dymarz and Andres Navas.
\newblock Non-rectifiable delone sets in sol and other solvable groups.
\newblock {\em Indiana Univ. Math. J.}, 67:89--118, 2018.

\bibitem{EskinFarb}
Alex Eskin and Benson Farb.
\newblock Quasi-flats and rigidity in higher rank symmetric spaces.
\newblock {\em J. Amer. Math. Soc.}, 10(3):653--692, 1997.

\bibitem{EFW1}
Alex Eskin, David Fisher, and Kevin Whyte.
\newblock Coarse differentiation and quasi-isometries i: spaces not
  quasi-isometric to cayley graphs.
\newblock {\em Annals of Mathematics}, 176:221--260, 2012.

\bibitem{EFW2}
Alex Eskin, David Fisher, and Kevin Whyte.
\newblock Coarse differentiation of quasi-isometries ii: Rigidity for sol and
  lamplighter groups.
\newblock {\em Annals of Mathematics}, 177:869--910, 2013.

\bibitem{GenevoisTessera1}
Anthony Genevois and Romain Tessera.
\newblock Asymptotic geometry of lamplighters over one-ended groups, 2021.

\bibitem{GenevoisTessera2}
Anthony Genevois and Romain Tessera.
\newblock Measure-scaling quasi-isometries, 2021.

\bibitem{Gromov}
Mikhael Gromov.
\newblock Asymptotic invariants of infinite groups.
\newblock {\em London Math. Soc. Lecture Note Ser.}, 182:1--295, 1993.

\bibitem{KleinerLeeb}
Bruce Kleiner and Bernhard Leeb.
\newblock Rigidity of quasi-isometries for symmetric spaces and {E}uclidean
  buildings.
\newblock {\em Inst. Hautes \'{E}tudes Sci. Publ. Math.}, (86):115--197 (1998),
  1997.

\bibitem{Nekrashevych}
Volodymyr Nekrashevych.
\newblock Quasi-isometric hyperbolic groups are bi-lipschitz equivalent.
\newblock {\em Dopov. Nats. Akad. Nauk Ukr. Mat. Prirodozn. Tekh. Nauki},
  (1):32--35, 1998.

\bibitem{NowakYu}
Piotr~W. Nowak and Guoliang Yu.
\newblock {\em Large scale geometry}.
\newblock EMS Textbooks in Mathematics. European Mathematical Society (EMS),
  Z\"{u}rich, 2012.

\bibitem{PapasogluWhyte}
Panos Papasoglu and Kevin Whyte.
\newblock Quasi-isometries between groups with infinitely many ends.
\newblock {\em Commentarii Mathematici Helvetici}, 77:133 -- 144, 2002.

\bibitem{Peng1}
Irine Peng.
\newblock {Coarse differentiation and quasi-isometries of a class of solvable
  Lie groups I}.
\newblock {\em Geometry and Topology}, 15(4):1883--1925, 2011.

\bibitem{Peng2}
Irine Peng.
\newblock Coarse differentiation and quasi-isometries of a class of solvable
  lie groups ii.
\newblock {\em Geometry and Topology}, 15:1927--1981, 2011.

\bibitem{Whyte}
Kevin Whyte.
\newblock Amenability, bilipschitz equivalence, and the von neumann conjecture.
\newblock {\em Duke Mathematical Journal}, 99(1):93 -- 112, 1999.

\end{thebibliography}
		
	\end{document}